\documentclass[11pt]{amsart}
\usepackage{graphicx,amscd,amsmath,amsfonts,amssymb,color,geometry}

\usepackage[centertags]{amsmath}
\usepackage{amsthm}
\usepackage[colorlinks]{hyperref}

\usepackage[shortlabels]{enumitem} % more enumerate/itemize options

\usepackage[initials]{amsrefs}

\makeatletter
\@namedef{subjclassname@2020}{%
 \textup{2020} Mathematics Subject Classification}
\makeatother

\newtheorem{theorem}{Theorem}[section]
\newtheorem{proposition}[theorem]{Proposition}

\newtheorem{remark}[theorem]{Remark}

\newtheorem{lemma}[theorem]{Lemma}

\numberwithin{equation}{section}

\input epsf

\geometry{left=3.3cm,right=3.3cm,top=3.3cm,bottom=3.3cm,headheight=3mm}

\DeclareMathOperator{\sign}{sign}
\DeclareMathOperator{\graph}{graph}
\DeclareMathOperator{\ext}{ext}

\newcommand{\vertiii}[1]{{\left\vert\kern-0.25ex\left\vert\kern-0.25ex\left\vert #1 \right\vert\kern-0.25ex\right\vert\kern-0.25ex\right\vert}}
\begin{document}

\baselineskip=.65cm

\begin{abstract}
This work is a thorough and detailed study on the geometry of the unit sphere of certain Banach spaces of homogeneous polynomials in ${\mathbb{R}}^2$. Specifically, we provide a complete description of the unit spheres, identify the extreme points of the unit balls, derive explicit formulas for the corresponding polynomial norms, and describe the techniques required to tackle these questions.

To enhance the comprehensiveness of this work, we complement the results and their proofs with suitable diagrams and figures. The new results presented here settle some open questions posed in the past. For the sake of completeness of this work, we briefly discuss previous known results and provide directions of research and applications of our results.
\end{abstract}

\title{Geometry of homogeneous polynomials in ${\mathbb R}^2$}

\author[Garc\'{\i}a]{Domingo Garc\'{\i}a}
\address[Domingo Garc\'{\i}a]{\newline \indent Departamento de An\'{a}lisis Matem\'{a}tico,\newline \indent 
Universidad de Valencia,\newline \indent Doctor Moliner 50, 46100 Burjasot (Valencia),\newline \indent Spain}
\email{domingo.garcia@uv.es}

\author[Jung]{Mingu Jung}
\address[Mingu Jung]{\newline \indent June E Huh Center for Mathematical Challenges,\newline \indent Korea Institute for Advanced Study (KIAS),\newline \indent 02455 Seoul, Republic of Korea}
\email{jmingoo@kias.re.kr}

\author[Maestre]{Manuel Maestre}
\address[Manuel Maestre]{\newline \indent Departamento de An\'{a}lisis Matem\'{a}tico,\newline \indent 
	Universidad de Valencia, \newline \indent Doctor Moliner 50, 46100 Burjasot (Valencia), Spain}
\email{manuel.maestre@uv.es}

\author[Mu\~{n}oz]{Gustavo A. Mu\~{n}oz-Fern\'{a}ndez}
\address[Gustavo A. Mu\~{n}oz-Fern\'{a}ndez]{\newline \indent Instituto de Matem\'{a}tica Interdisciplinar (IMI),\newline \indent Departamento de An\'{a}lisis Matem\'{a}tico y Matem\'atica Aplicada,\newline \indent 
	Facultad de Ciencias Matem\'{a}ticas,\newline \indent 
	Plaza de Ciencias 3,\newline \indent 
	Universidad Complutense de Madrid,\newline \indent 
	Madrid, 28040 (Spain).}
\email{gustavo\_fernandez@mat.ucm.es}

\author[Seoane]{ Juan B.~Seoane-Sep\'{u}lveda}
\address[Juan B.~Seoane-Sep\'{u}lveda]{\newline \indent Instituto de Matem\'{a}tica Interdisciplinar (IMI),\newline \indent Departamento de An\'{a}lisis Matem\'{a}tico y Matem\'atica Aplicada,\newline \indent 
	Facultad de Ciencias Matem\'{a}ticas,\newline \indent 
	Plaza de Ciencias 3,\newline \indent 
	Universidad Complutense de Madrid,\newline \indent 
	Madrid, 28040 (Spain).}
\email{jseoane@ucm.es}

\thanks{The first and the third authors were supported by projects PID2021-122126NB-C33/MCIN/AEI/ 10.13039/ 501100011033\- (FEDER) and PROMETEU/2021/070. The second author was supported by KIAS Individual Grants (MG086601, HP086601) at Korea Institute for Advanced Study and June E Huh Center for Mathematical Challenges.\\
\textit{Corresponding author}: Mingu Jung. Email: jmingoo@kias.re.kr}
% % % % % % % %

\keywords{Convexity, Extreme Points, Polynomial Norms, Trinomials}
\subjclass[2020]{52A21, 46B04}

\maketitle

\tableofcontents

\section{Introduction}

The study of the unit ball of a polynomial space has motivated a significant volume of publications. However, this problem has much earlier roots. As early as 1966, Konheim and Rivlin \cite{KoRi} provided a characteristic property of the extreme points of the unit ball of the space ${\mathcal P}_n({\mathbb R})$, which consists of polynomials on the real line of degree at most $n$, endowed with the norm
$$
\|P\|=\sup\{|P(x)|: x\in[-1,1]\}.
$$
The search for characterizations of the extreme points of the unit ball of a polynomial space intensified since the late 1990's. The references \cite{GaGrMa,Di3,AK,BMRS,CK1,CK2,CK3,GMSS,ChKiKi,G1,G2,G3,G4,G5,G6,G7,MN1,MN2,MN3,MPSW,MRS,MS,N,
	RyTu,MR4740398} are just a selection of contributions to the study of the geometry of a number of polynomial spaces of low dimension. The interested reader can find a monograph on this topic in \cite{FGMMRS}.

Interestingly, polynomial norms are frequently non-absolute. Recall that a norm $\| \cdot \|$ on a linear subspace $X$ of ${\mathbb R}^\Lambda$ (here, $\Lambda$ is any non-empty set) is \textit{absolute} whenever it is complete and satisfies the following conditions:
\begin{enumerate}
	\item[(a)] Given $x, y \in \mathbb{R}^\Lambda$ with $|x(\lambda)|=|y(\lambda)|$ for every $\lambda \in \Lambda$, if $x \in X$, then $y \in X$ with $\|y\| = \|x\|$. 
	\item[(b)] For every $\lambda \in \Lambda$, the function $e_\lambda : \Lambda \rightarrow \mathbb{R}$ given by $e_\lambda (\xi) = \delta_{\lambda \xi}$ for $\xi \in \Lambda$, belongs to $X$ with $\|e_\lambda\|=1$. 
\end{enumerate}
One property which can be deduced from the definition is the following: given $x, y \in \mathbb{R}^\Lambda$ with $|y(\lambda)|\leq |x(\lambda)|$ for every $\lambda \in \Lambda$, if $x \in X$, then $y \in X$ with $\|y\| \leq \|x\|$. 
The $\ell_p$-norms together with many other classical norms are clearly absolute in ${\mathbb R}^\Lambda$. Hence polynomial norms can be viewed as a source of natural examples of non-absolute norms. 

Perhaps one of the most relevant motivations to study polynomial spaces, and more specifically the extreme points of their unit balls, rests on the fact that an elementary application of the Krein-Milman theorem allows us to obtain optimal constants in a number of polynomial inequalities of interest. For instance, in \cite{AMRS, AJMS, MSS, JMPS, MSanSeo1, MSanSeo2,V,AAMarkov,ENG,KoRi} one can find several applications that obtain optimal Bernstein and Markov type estimates using a combination of geometric results and the so-called Krein-Milman approach. Similarly, several polynomial Bohnenblust-Hille type inequalities (see, e.g., \cite{DGMPbook,MR3250297,MR4740398}) can be derived (see, for instance, \cite{JMMS, DMPS, MPS}). The same techniques can be applied to obtain sharp estimates of polarization constants and unconditional constants (see \cite{MRS, G7}) and many other polynomial inequalities of interest (see, for instance, \cite{AEMRS, AJMS2}).

We now provide a precise definition of the polynomial space that will be examined in this paper. Consider the space consisting of the homogeneous trinomials $ax^m+bx^{m-n}y^n+cy^m$ on the plane ${\mathbb R}^2$ where $a,b,c\in{\mathbb R}$ and $m,n\in{\mathbb N}$ are such that $m>n$. That space, endowed with the norm
	\begin{align*}
	\vertiii{ax^m+bx^{m-n}y^n+cy^m}_{m,n} &=\sup\left\{|ax^m+bx^{m-n}y^n+cy^m|:(x,y)\in[-1,1]^2\right\}
	\end{align*} 
will be denoted by ${\mathcal P}^h_{m,n}({\mathbb R})$. The aim of this paper is to explore the geometry of the unit ball of ${\mathcal P}^h_{m,n}({\mathbb R})$, extending and generalizing the results established by Choi, Kim, and Ki in 1998 \cite{ChKiKi}, Aron and Klimek in 2001 \cite{AK}, and Muñoz and Seoane in 2008 \cite{MS}. Furthermore, we complete the investigation of ${\mathcal P}^h_{m,n}({\mathbb R})$ that was initiated in \cite{JMR2021} and \cite{GMS2023}.

In particular, we will provide a complete description of the unit sphere ${\mathsf S}^h_{m,n}$ in ${\mathcal P}^h_{m,n}({\mathbb R})$ and characterize the extreme points of the unit ball ${\mathsf B}^h_{m,n}$ in ${\mathcal P}^h_{m,n}({\mathbb R})$. The solutions to these two questions depend significantly on whether $m$ and $n$ are even or odd. A thorough analysis reveals that the problem must be examined in the following three main cases:
\begin{enumerate}[ref=\Alph*]
	\item Case \ref{caseA}: $m$ is odd. \label{caseA}
	
	\item Case \ref{caseB}: Both $m$ and $n$ are even. \label{caseB}
	
	\item Case \ref{caseC}: $m$ is even and $n$ is odd. \label{caseC}
\end{enumerate}
Cases \ref{caseA} and \ref{caseB} have been previously studied in \cite{JMR2021} and \cite{GMS2023}, respectively. For the sake of completeness, we will present the known results for each case in the following.

%%%%%%%%%%%%%%%%%%
%%%%%%%%%%%%%%%%%%
%%%%%%%%%%%%%%%%%%
%%%%%%%%%%%%%%%%%%
\subsection{Case \ref{caseA}: $m$ is odd}\mbox{ }\newline
\indent For Case \ref{caseA}, the detailed study of the geometry of the unit sphere ${\mathsf S}^h_{m,n}$ of ${\mathcal P}^h_{m,n}({\mathbb R})$ needs to be divided into two different subcases, depending on whether $\frac{m}{n} < 2$ or $\frac{m}{n} > 2$ (since $\frac{m}{n} = 2$ is not possible in this case). Although the two subcases within Case \ref{caseA} are similar, let us briefly describe below what the geometry of the unit balls ``looks like."

As mentioned earlier, the study of the geometry of ${\mathsf B}_{m,n}$ depends strongly on whether $m$ and $n$ are even or odd and each of the four possible choices of the parity of $m$ and $n$ requires a specific treatment (see \cite{MS}). As a related space to ${ \mathcal{P}}^h_{m,n} (\mathbb{R})$, consider the space ${\mathcal P}_{m,n}({\mathbb R})$ consisting of the trinomials in the real line of the form $ax^m+bx^n+c$ endowed with the norm
	$$
	\|ax^m+bx^n+c\|_{m,n}=\sup\{|ax^m+bx^n+c|:x\in[-1,1]\}.
	$$
For simplicity, we will frequently use the representation of the polynomials $ax^m+bx^{m-n}y^n+cy^m \in {\mathcal P}^h_{m,n}({\mathbb R})$ or $ax^m+bx^n+c \in {\mathcal{P}}_{m,n} (\mathbb{R})$ as the triple $(a,b,c)\in{\mathbb R}^3$, that is, 
\begin{enumerate}
\item[] $\vertiii{(a,b,c)}_{m,n} = \sup\left\{|ax^m+bx^{m-n}y^n+cy^m|:(x,y)\in[-1,1]^2\right\}$, and 
\item[] $\| (a,b,c)\|_{m,n} = \sup\{|ax^m+bx^n+c|:x\in[-1,1]\}$.
\end{enumerate} 

The norms $\vertiii{\cdot}_{m,n}$ and $\| \cdot \|_{m,n}$ are tightly related; in fact, using the fact that the elements of ${\mathcal P}^h_{m,n}({\mathbb R})$ attain their norm on the set $\{(1,y),(x,1):x,y\in[-1,1]\}$ by homogeneity and symmetry, it is not difficult to see that 
\begin{equation}\label{eq:relation}
\vertiii{(a,b,c)}_{m,n}=\max\{\|(a,b,c)\|_{m,m-n},\|(c,b,a)\|_{m,n}\},
\end{equation}
for every $(a,b,c)\in{\mathbb R}^3$.
The previous identity reveals that 	
\begin{equation}\label{eq:reduction}
	\vertiii{(a,b,c)}_{m,n}=\vertiii{(c,b,a)}_{m,m-n},
\end{equation}
for every $(a,b,c)\in{\mathbb R}^3$. 
The identity \eqref{eq:reduction} allows us to simplify the study of the geometry of ${\mathsf B}^h_{m,n}$, at least when $m$ is odd. The case where both $m$ and $n$ are odd can be reduced to the case where $m$ is odd and $n$ is even by swapping $a$ and $c$ on the one hand, and $n$ and $m-n$ on the other. Thus, in order to describe the Case \ref{caseA}, we shall focus our attention on the case $m$ odd and $n$ even. First, let us present the following auxiliary result, followed by the explicit formulas for the norm $\vertiii{\cdot}_{m,n}$.

\begin{proposition}[\mbox{\cite[Lemma 2.1]{MS}}] \label{formulaOddOddThmLin}
If $m,n\in{\mathbb N}$ are such that $m>n$ then the equation
	$$
	|n+mx|=(m-n)|x|^\frac{m}{m-n}
	$$
	has only three roots, one at $x=-1$, another one at a point
	$\lambda_0\in(-\frac{n}{m},0)$ and a third one at a point
	$\lambda_1>0$. In addition to that we have
	\begin{equation}\label{convexity}
		|n+mx|< (m-n)|x|^\frac{m}{m-n},
	\end{equation}
if and only if $x<\lambda_0$ or $x > \lambda_1$.
\end{proposition}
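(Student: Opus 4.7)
The plan is to analyze the function $f(x) = (m-n)|x|^{m/(m-n)} - |n+mx|$, whose zero set is exactly the solution set of the equation and whose positivity set is exactly where \eqref{convexity} holds. Since the absolute values create corners only at $x=-n/m$ (where $n+mx$ changes sign) and at $x=0$, I would split the real line into the three intervals $I_1 = (-\infty, -n/m]$, $I_2 = [-n/m, 0]$, and $I_3 = [0,\infty)$ and treat $f$ on each by elementary calculus: locate the critical points, read off the monotonicity, and apply the intermediate value theorem.

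On $I_3$, $f(x) = (m-n)x^{m/(m-n)} - n - mx$, so $f'(x) = m\bigl(x^{n/(m-n)}-1\bigr)$ vanishes only at $x=1$. Then $f$ decreases on $[0,1]$ from $-n$ to $-2n$ and rises to $+\infty$, giving a unique zero $\lambda_1>1$ with $f<0$ on $[0,\lambda_1)$ and $f>0$ on $(\lambda_1,\infty)$. On $I_2$, $f(x) = (m-n)(-x)^{m/(m-n)} - n - mx$ and $f'(x) = -m\bigl((-x)^{n/(m-n)}+1\bigr) < 0$, so $f$ is strictly decreasing; since $f(-n/m) = (m-n)(n/m)^{m/(m-n)} > 0$ and $f(0)=-n<0$, there is a unique zero $\lambda_0 \in (-n/m,0)$ with $f>0$ to its left on $I_2$ and $f<0$ to its right.

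The delicate step is $I_1$, where $f(x) = (m-n)(-x)^{m/(m-n)} + n + mx$ and $f'(x) = m\bigl(1-(-x)^{n/(m-n)}\bigr)$ vanishes only at $x=-1$. Direct substitution yields $f(-1) = (m-n)+n-m = 0$, so the unique critical point of $f$ on $I_1$ is simultaneously a zero and a global minimum; together with $f(x)\to+\infty$ as $x\to-\infty$ and $f(-n/m)>0$, this forces $f>0$ on $I_1 \setminus \{-1\}$ and $f(-1)=0$. Assembling the three pieces produces exactly the three claimed roots $-1$, $\lambda_0$, $\lambda_1$ and identifies $\{f>0\} = (-\infty,\lambda_0)\cup(\lambda_1,\infty)$ away from the tangent zero at $x=-1$, which is the content of \eqref{convexity}. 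The main obstacle I anticipate is precisely this tangency at $x=-1$: both $f$ and $f'$ vanish there, so care is required to see that $-1$ is a genuine root of the equation yet $f$ does not change sign through it, reconciling the three-root count with the sign description.
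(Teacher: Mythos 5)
Your argument is correct and is the natural one: the paper itself gives no proof of this proposition (it is quoted verbatim from [MS, Lemma 2.1]), so there is nothing to compare against beyond noting that your split at the corner points $x=-n/m$ and $x=0$, the derivative/monotonicity analysis on each piece, and the intermediate value theorem constitute the standard elementary proof, and all your computations ($f'$ on each interval, $f(-n/m)=(m-n)(n/m)^{m/(m-n)}>0$, $f(0)=-n$, $f(1)=-2n$, the double zero at $x=-1$) check out. One remark: your handling of the tangency at $x=-1$ is exactly right, and it in fact shows that the quoted statement is literally imprecise at that single point, since $x=-1<\lambda_0$ but equality (not the strict inequality \eqref{convexity}) holds there; the correct reading, which your sign analysis delivers, is that $\{f>0\}=\bigl((-\infty,\lambda_0)\setminus\{-1\}\bigr)\cup(\lambda_1,\infty)$, i.e.\ the strict inequality holds for $x<\lambda_0$ or $x>\lambda_1$ except at the tangent root $x=-1$. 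Your claim that $x=-1$ is the global minimum of $f$ on $(-\infty,-n/m]$ deserves one explicit line (the sign of $f'=m(1-(-x)^{n/(m-n)})$ is negative for $x<-1$ and positive for $-1<x<-n/m$), but this is immediate from what you already wrote.
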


\begin{remark}\label{mu0}

The dependence of $\lambda_0$, in Proposition \ref{formulaOddOddThmLin}, on $m$ and $n$ justifies the notation $\lambda_0(m,n)$ to represent $\lambda_0$. The value of $\lambda_0(m,m-n)$ for every odd number $m$ and every even number $n$ with $m>n$ will play an important role in the results for Case \ref{caseA}. For short, we put $\mu_0(m,n)=\lambda_0(m,m-n)$, or simply $\mu_0=\mu_0(m,n)$. 

\end{remark}

\begin{proposition}[\mbox{\cite[Theorem 3.4]{JMR2021}}]
\label{formulaOddThmHom}
	Let $m,n\in{\mathbb N}$ with $m>n$, $m$ odd and $n$ even. Consider the number $K_{m,n}=\frac{n}{m-n}\left(\frac{m-n}{m}\right)^\frac{m}{n}$, the interval $I_{m,n}=[\eta_1,\eta_2]$, where $\eta_1=-\frac{m}{m-n}$, $\eta_2=\frac{m}{m-n}\mu_0$ and $\mu_0=\mu_0(m,n)$ is the number in $(-\frac{m-n}{m},0)$ introduced in Proposition \ref{formulaOddOddThmLin}, and the sets $A_{m,n}$, $F_{m,n}$, $B_{m,n}$ and ${\mathcal B}$ (see Figures \ref{fig:mnmenor2} and \ref{fig:mnmayor2}) given by
\begin{align*}
	A_{m,n}&=\left\{(x,y)\in {\mathbb R}^2:x\in I_{m,n}\text{ and } |y|\ge 1-K_{m,n}|x|^\frac{m}{n}\right\},\\
	F_{m,n}&=\{(x,y)\in{\mathbb R}^2:x\in I_{m,n}\text{ and } 1-K_{m,n}|x|^\frac{m}{n}<|y|< 1-|1+x|\},\\
	{\mathcal B}&=\{(x,y)\in{\mathbb R}^2:|x+1|+|y|<1\},\\
	B_{m,n}&={\mathcal B}\setminus F_{m,n}.
\end{align*}
Then,
\begin{equation}\label{formulaOddOddHom}
	\vertiii{(a,b,c)}_{m,n} =
	\begin{cases}
		\frac{n|a|}{m-n} \left|\frac{(m-n)b}{ma}\right|^\frac{m}{n} + |c|&\text{if $a\ne 0$ and $\left(\frac{b}{a},\frac{c}{a}\right)\in A_{m,n}$},\\
		|a| & \text{if $a\ne 0$ and $\left(\frac{b}{a},\frac{c}{a}\right)\in B_{m,n}$},\\
		|a+b|+|c| & \text{otherwise}.
	\end{cases}
\end{equation}	
\end{proposition}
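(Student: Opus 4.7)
The strategy is to reduce the bivariate sup on $[-1,1]^2$ to two univariate sup problems via the identity \eqref{eq:relation}, and then to match the results with the known explicit formulas from \cite{MS} for the univariate trinomial norm $\|\cdot\|_{m,k}$. Since $m$ is odd and $n$ is even, $m-n$ is also odd, so \eqref{eq:relation} expresses
\[
\vertiii{(a,b,c)}_{m,n}=\max\bigl\{\|(a,b,c)\|_{m,m-n},\ \|(c,b,a)\|_{m,n}\bigr\},
\]
where the first trinomial has both exponents odd and the second has $m$ odd and $n$ even. For each parity combination, \cite{MS} provides a piecewise formula with three branches: an interior-critical-point value, a ``constant-term'' branch, and an endpoint branch. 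The proof will amount to combining these two formulas and identifying which branch dominates in each region of parameter space.

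Assuming $a\neq 0$ (the case $a=0$ reduces to an easy monomial/binomial analysis) and normalizing by $|a|$, I would work in coordinates $(\beta,\gamma)=(b/a,c/a)\in\R^2$. A direct computation gives the interior critical points of $x\mapsto ax^m+bx^{m-n}+c$ at $x=\pm\bigl|\tfrac{(m-n)b}{ma}\bigr|^{1/n}$, provided $b/a\le 0$ (needed since $n$ is even) and $|x|\le 1$, i.e.\ $\beta\in\bigl[-\tfrac{m}{m-n},0\bigr]$. The critical value is
\[
\frac{n|a|}{m-n}\left|\frac{(m-n)b}{ma}\right|^{m/n}+|c|=|a|\bigl(K_{m,n}|\beta|^{m/n}+|\gamma|\bigr),
\]
which matches the first branch of the claimed formula. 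A parallel analysis of the second univariate norm $\|(c,b,a)\|_{m,n}$ shows that its interior-critical contribution is always dominated, on the relevant $(\beta,\gamma)$-regions, by the three candidates $|a|\bigl(K_{m,n}|\beta|^{m/n}+|\gamma|\bigr)$, $|a|$, and $|a+b|+|c|$, so the maximum in \eqref{eq:relation} reduces to a choice among these three.

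The geometric heart of the argument is to identify the region in the $(\beta,\gamma)$-plane on which each candidate wins. The curve $|\gamma|=1-K_{m,n}|\beta|^{m/n}$ is exactly where the interior-critical value equals $|a|$, and the diamond boundary $|1+\beta|+|\gamma|=1$ is where $|a|$ equals $|a+b|+|c|$. Proposition \ref{formulaOddOddThmLin}, applied with $n$ replaced by $m-n$, controls the crossing of these two curves: $\mu_0=\mu_0(m,n)\in(-\tfrac{m-n}{m},0)$ is the transition point, and this fixes the endpoints $\eta_1=-\tfrac{m}{m-n}$ and $\eta_2=\tfrac{m}{m-n}\mu_0$ of $I_{m,n}$. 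The set $A_{m,n}$ is then precisely the region where the interior-critical branch dominates, $F_{m,n}\subset\mathcal{B}$ is the thin strip where $|a+b|+|c|>|a|$ despite $(\beta,\gamma)\in\mathcal{B}$, and $B_{m,n}=\mathcal{B}\setminus F_{m,n}$ is where the norm equals $|a|$; on the complement the endpoint value wins.

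The main technical obstacle is the final bookkeeping. The odd parity of $m$ forces tracking of the signs of $a$ and $c$; the even parity of $n$ breaks the $b$-sign symmetry; and the inequality \eqref{convexity} from Proposition \ref{formulaOddOddThmLin} is needed repeatedly to decide on which side of each separating curve a given $(\beta,\gamma)$ lies, particularly for $\beta$ outside $I_{m,n}$ where the interior critical point either falls outside $[-1,1]$ or is beaten by the endpoint branch. Carrying out this case analysis without gaps and verifying that the three branches exhaust $\R^2$ disjointly is the bulk of the work; once done, formula \eqref{formulaOddOddHom} follows.
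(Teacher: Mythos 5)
First, note that the paper does not actually prove Proposition \ref{formulaOddThmHom}: it is recalled verbatim from \cite{JMR2021} (Case \ref{caseA} is only surveyed here). Your overall route --- reduce $\vertiii{\cdot}_{m,n}$ via \eqref{eq:relation} to the two univariate norms of the restrictions to the edges $y=1$ and $x=1$, plug in the explicit trinomial formulas of \cite{MS}, and then decide region by region in the normalized $(\beta,\gamma)=(b/a,c/a)$ plane which candidate dominates --- is indeed the standard strategy, and it is the same scheme this paper uses in Section \ref{sec:formula} for Case \ref{caseC}. Your identification of the separating curves is also essentially right: $K_{m,n}|\beta|^{m/n}+|\gamma|=1$ is where the critical-value branch equals $|a|$, the diamond $|1+\beta|+|\gamma|=1$ is where $|a|$ equals $|a+b|+|c|$, and their crossing is governed by Proposition \ref{formulaOddOddThmLin} with $n$ replaced by $m-n$, which is exactly where $\eta_2=\frac{m}{m-n}\mu_0$ comes from.

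There are, however, two concrete problems. (1) Your description of $F_{m,n}$ is wrong: after normalizing by $|a|$, membership in $\mathcal{B}$ \emph{means} $|a+b|+|c|<|a|$, so ``the strip where $|a+b|+|c|>|a|$ despite $(\beta,\gamma)\in\mathcal{B}$'' is empty. In fact $F_{m,n}\subset A_{m,n}$ is the portion of $\mathcal{B}$ on which the interior-critical value $|a|\bigl(K_{m,n}|\beta|^{m/n}+|\gamma|\bigr)$ exceeds $|a|$; that is why it must be deleted from $\mathcal{B}$ to form $B_{m,n}$, and on $F_{m,n}$ the norm is given by the first branch, not by the endpoint expression. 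Since this bookkeeping is exactly ``the bulk of the work'' you defer, the misidentification is not cosmetic. (2) You assert that the nontrivial interior-critical value of the $x=1$ edge, i.e.\ of $cy^m+by^n+a$ ($m$ odd, $n$ even), ``is always dominated'' by the three displayed candidates. That is one of the genuinely nontrivial comparisons of the theorem (note that the $|a|$ branch itself already comes from the critical point $y=0$ of this edge), and it has to be carried out with the corresponding \cite{MS} formula rather than asserted; incidentally, those formulas have two branches, not three. As it stands the proposal is a reasonable plan consistent with the proof in \cite{JMR2021}, but the region analysis that constitutes the actual proof is both deferred and, in the case of $F_{m,n}$, misstated.
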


\begin{figure}
	\centering
	\includegraphics[height=.75\textwidth,keepaspectratio=true]{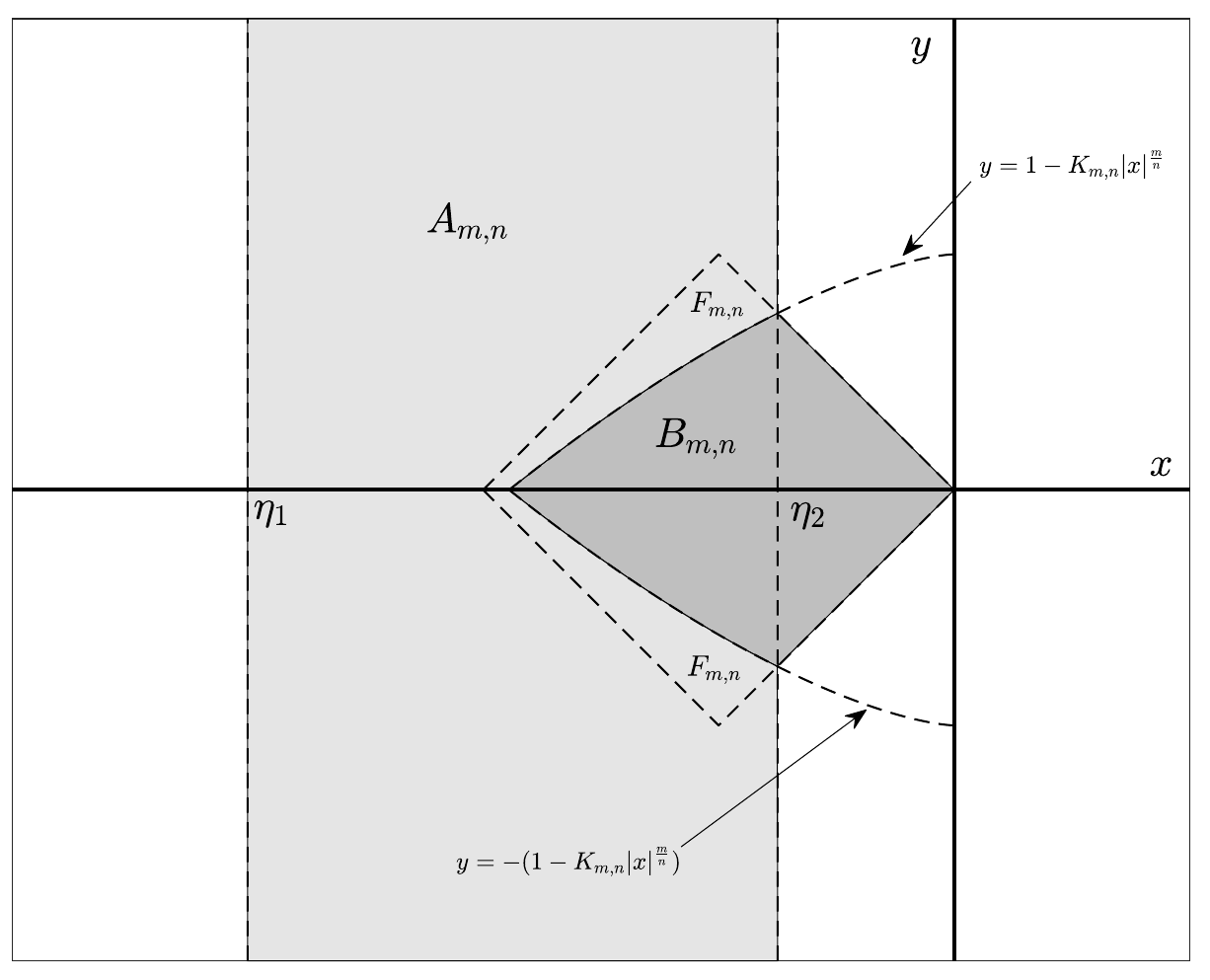}
	\caption{Regions appearing in the definition of $\vertiii{\cdot}_{m,n}$ in Case \ref{caseA}, where $m$ is odd, $n$ is even and $\frac{m}{n}<2$. The case considered in the picture corresponds to the choice $m=3$ and $n=2$.}\label{fig:mnmenor2}
\end{figure}

\begin{figure}
	\centering
	\includegraphics[height=.75\textwidth,keepaspectratio=true]{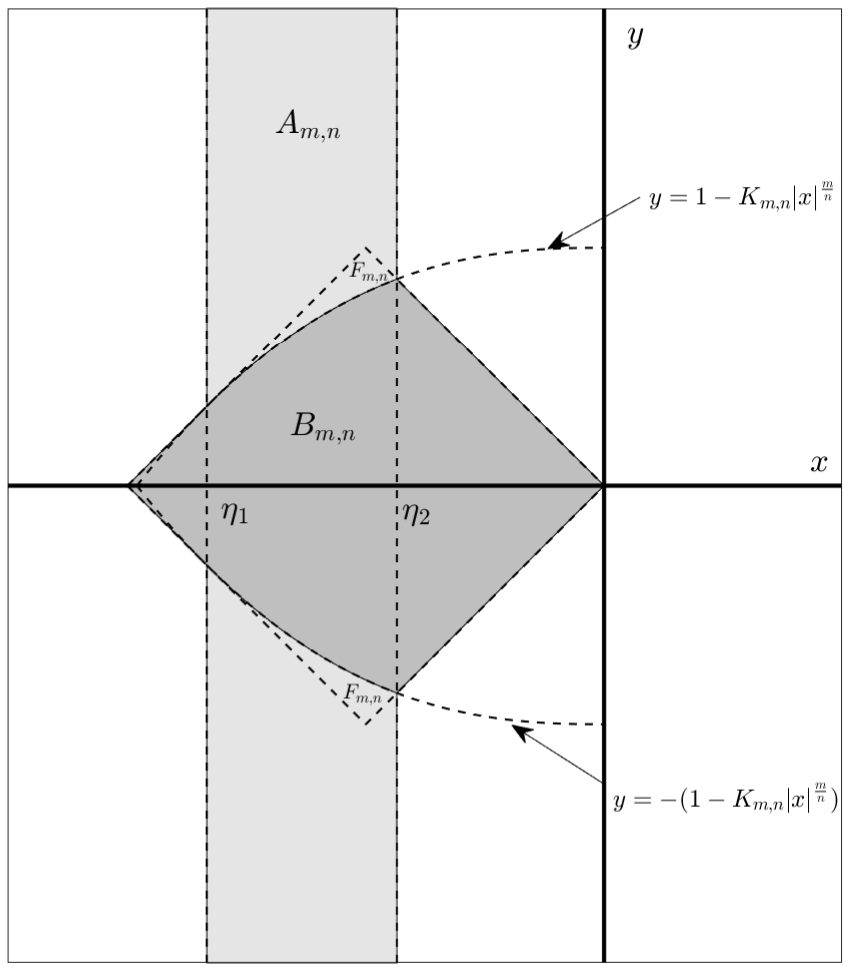}
	\caption{Regions appearing in the definition of $\vertiii{\cdot}_{m,n}$ in Case \ref{caseA}, where $m$ is odd, $n$ is even and $\frac{m}{n}>2$. The case considered in the picture corresponds to the choice $m=5$ and $n=2$.}\label{fig:mnmayor2}
\end{figure}

Once the formulas for the norm are provided, a different matter is to obtain the set of extreme points for the unit ball. In this Case \ref{caseA}, the description of extreme points of ${\mathsf B}^h_{m,n}$ are given in \cite{JMR2021}, namely:

\begin{proposition}[\mbox{\cite[Theorem 3.6]{JMR2021}}] \label{extodd}
	Let $m,n\in{\mathbb N}$ be such that $m>n$, $m$ is odd and $n$ is even and suppose that 
	$$K_{m,n}=\frac{n}{m-n}\left(\frac{m-n}{m}\right)^\frac{m}{n}, $$
	$$L_{m,n}=\frac{m}{m-n}\left(\frac{m-n}{n}\right)^\frac{n}{m},$$ 
	$$a_0=\frac{m-n}{n}, \quad \eta_1=-\frac{m}{m-n}, \quad \text{ and } \quad \eta_2=\frac{m}{m-n}\mu_0.$$
	We have: 
	\begin{itemize}
		\item[$\bullet$] If $\frac{m}{n}<2$, then 
		\begin{align*}
			\ext({\mathsf B}^h_{m,n})&=\left\{\pm\left(-1,t,\pm(1-K_{m,n}|t|^\frac{m}{n})\right):t\in[-\eta_2,L_{m,n}]\right\}\\
			&\quad \cup\left\{\pm (0,s,L_{m,n}|s|^\frac{m}{n}):s\in[-1,-a_0]\right\}\\
			&\quad \cup\{(\pm 1,0,0),(0,0\pm 1)\}.
		\end{align*}
			\item[$\bullet$] If $\frac{m}{n}>2$, then 
		\begin{align*}
			\ext({\mathsf B}^h_{m,n})&=\left\{\pm\left(-1,t,\pm(1-K_{m,n}|t|^\frac{m}{n})\right):t\in[-\eta_2,-\eta_1]\right\}\\
			&\quad \cup\{(\pm1,0,0),(0,0\pm 1),\pm(1,-2,0)\}.
		\end{align*}
	\end{itemize}
\end{proposition}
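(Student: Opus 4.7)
The plan is to read off the extreme points directly from the explicit norm formula of Proposition \ref{formulaOddThmHom}. That formula partitions $\mathbb{R}^3\setminus\{0\}$ into three conical regions on which $\vertiii{\cdot}_{m,n}$ is given, respectively, by the smooth convex expression $\frac{n|a|}{m-n}\left|\frac{(m-n)b}{ma}\right|^{m/n}+|c|$, by $|a|$, and by $|a+b|+|c|$. Correspondingly, the unit sphere ${\mathsf S}^h_{m,n}$ decomposes into a curved piece $S_A$ and two flat pieces $S_B$, $S_O$, glued along their common boundaries. Throughout the argument, the sign symmetries $(a,b,c)\mapsto(-a,-b,c)$ and $(a,b,c)\mapsto(a,b,-c)$, which preserve $\vertiii{\cdot}_{m,n}$ because $m$ is odd and $n$ is even, let us reduce the analysis to a single octant and account for the $\pm$ appearing in the statement.

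First I would parameterize $S_A$ by fixing $a=-1$: the equation $\vertiii{(-1,t,c)}_{m,n}=1$ becomes $K_{m,n}|t|^{m/n}+|c|=1$, giving the two smooth arcs $c=\pm(1-K_{m,n}|t|^{m/n})$, which are precisely the ones appearing in the proposition. Since $m/n>1$, each arc is strictly convex in the sense that it contains no line segment, and a short argument shows that every point in the interior of its admissible parameter interval is an extreme point of ${\mathsf B}^h_{m,n}$: a nontrivial midpoint representation would force the connecting segment either to remain on the strictly convex arc or to cross into another region of the norm formula, both of which are incompatible with the local shape of the ball.

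Next I would dispose of the flat pieces. The piece $S_B$ lies in the hyperplanes $a=\pm 1$, a $2$-dimensional flat region, and the polyhedral piece $S_O$ is ruled by line segments in the direction $(1,-1,0)$ together with the reflection $c\mapsto-c$; interior points of either piece are clearly non-extreme. Extreme candidates coming from $S_B\cup S_O$ must therefore lie on the one-dimensional interfaces with $S_A$ or at $0$-dimensional vertices. The interface $\partial A_{m,n}\cap\partial B_{m,n}$ lifts exactly to the arcs already counted inside $S_A$ (a direct computation gives $\vertiii{\cdot}_{m,n}=|a|$ there), and the polyhedral corners contribute the isolated extreme points $(\pm 1,0,0)$ and $(0,0,\pm 1)$.

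The main difficulty is to determine the endpoints of the admissible parameter interval for $t$ and to track how $S_A$ meets the polyhedral pieces, which is where the dichotomy $\frac{m}{n}<2$ versus $\frac{m}{n}>2$ enters. A direct comparison shows that when $\frac{m}{n}>2$ one has $-\eta_1=\frac{m}{m-n}<L_{m,n}$, so the admissibility constraint $-t\in I_{m,n}$ truncates the arc at $t=-\eta_1$, and the adjacent polyhedral face of $S_O$ contributes the extra extreme vertices $\pm(1,-2,0)$. When $\frac{m}{n}<2$ one instead has $L_{m,n}<-\eta_1$; the identity $K_{m,n}L_{m,n}^{m/n}=1$ shows that the arc terminates at $t=L_{m,n}$ on the plane $c=0$, and from this endpoint the sphere continues along an additional edge of extreme points of the form $\pm(0,s,L_{m,n}|s|^{m/n})$ with $s\in[-1,-a_0]$, corresponding to the 1-dimensional boundary of the piece of $S_O$ attached to the terminal point of the arc. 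Verifying these identifications rigorously, and ruling out any further extreme candidates, reduces to a finite but delicate case analysis based on the three-case norm formula; this is the only step that demands genuine effort.
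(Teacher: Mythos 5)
The paper does not prove Proposition \ref{extodd}; it quotes it from \cite{JMR2021}, so your proposal can only be judged on its own terms, and on those terms it is a programme rather than a proof. The general strategy --- read the sphere off the norm formula of Proposition \ref{formulaOddThmHom}, use the symmetries $(a,b,c)\mapsto(-a,-b,c)$ and $(a,b,c)\mapsto(a,b,-c)$, note that the pieces where the norm equals $|a|$ or $|a+b|+|c|$ are flat while the remaining piece is curved/ruled, and hunt for extreme points on faces and interfaces --- is the right kind of argument; it is essentially how the paper handles Case \ref{caseC} in Theorem \ref{thm_extreme_points}. But everything that actually constitutes the theorem is deferred: you never explain why the arcs of extreme points begin exactly at $t=-\eta_2$ (the lateral interface $b/a=\eta_2$ of $A_{m,n}$, which is where $\mu_0$ enters); your justification that arc points are extreme (``incompatible with the local shape of the ball'') is not an argument, whereas the clean route is that $|a|\le\vertiii{(a,b,c)}_{m,n}$, so $\{a=-1\}$ is a supporting hyperplane, ${\mathsf B}^h_{m,n}\cap\{a=-1\}$ is an exposed face, and extreme points of that planar face are extreme in the ball; and the claims that the truncation for $\frac{m}{n}>2$ contributes exactly the vertices $\pm(1,-2,0)$ and that nothing else is extreme are asserted, not proved. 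You concede at the end that a ``finite but delicate case analysis'' remains; that analysis \emph{is} the proof.

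There is also a concrete error in the one structural claim you make for $\frac{m}{n}<2$. Since $m-n$ is odd, taking $y=1$ and $x=\pm1$ gives $\vertiii{(0,b,c)}_{m,n}=|b|+|c|$, so a point $(0,s,L_{m,n}|s|^{m/n})$ has norm $|s|+L_{m,n}|s|^{m/n}$, which equals $1$ for at most one value of $|s|$ (for $m=3$, $n=2$ and $s=-1$ it is about $2.89$); these points do not lie on ${\mathsf S}^h_{m,n}$, and any extreme point of the ball with $a=0$ would have to be a corner of the square section $\{(0,b,c):|b|+|c|=1\}$, hence $(0,\pm1,0)$ or $(0,0,\pm1)$. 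So the second family as printed cannot be proved extreme, and your identification of it as ``the $1$-dimensional boundary of the piece of $S_O$ attached to the terminal point of the arc'' does not hold up. What actually continues past the endpoint $(-1,L_{m,n},0)$ when $\frac{m}{n}<2$ is the ridge $c=0$ inside region $A$, where the two sheets $|c|=1-K_{m,n}|b|^{\frac{m}{n}}|a|^{1-\frac{m}{n}}$ of the curved part of the sphere meet; solving $K_{m,n}|b|^{\frac{m}{n}}|a|^{1-\frac{m}{n}}=1$ gives the curve $\pm\bigl(s,\,L_{m,n}|s|^{\frac{m-n}{m}},\,0\bigr)$ with $s\in[-1,-a_0]$, the endpoint $-a_0$ coming from the constraint $|b/a|\le-\eta_1$ --- which strongly suggests the quoted statement is a garbled transcription. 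A sound proof would have to detect this inconsistency and then establish extremality of that ridge (it is not an edge of the flat piece where the norm is $|a+b|+|c|$, but the meeting line of two ruled sheets whose rulings pass through $(0,0,1)$ and $(0,0,-1)$), for instance by exhibiting supporting hyperplanes as the paper does for the analogous curved edges in the proof of Theorem \ref{thm_extreme_points}; your proposal does none of this.
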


These extreme points are spotted in Figures \ref{figureball_54} and \ref{figureball_52} (which are represented for some choices of $m$ and $n$).

\begin{figure}[!h]
	\centering
	\includegraphics[height=.65\textwidth,keepaspectratio=true]{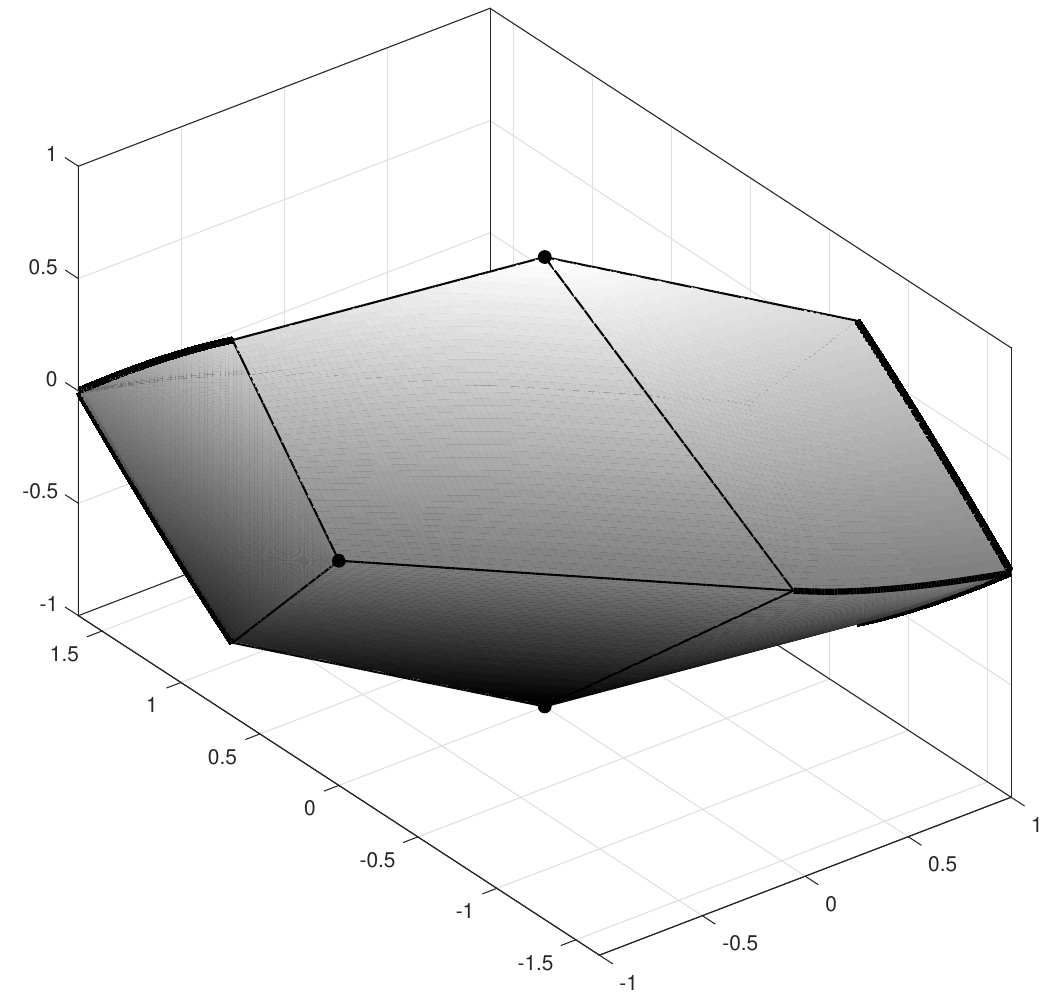}
	\caption{Sketch of ${\mathsf S}^h_{m,n}$ with $\frac{m}{n}<2$ in Case \ref{caseA}. The picture corresponds with the choice $m=5$, $n=4$. The extreme points appear with a thicker line or big dots. The surfaces that form ${\mathsf S}_{m,n}^h$ are delimited by thin lines.}\label{figureball_54}
\end{figure}

\begin{figure}[!h]
	\centering
	\includegraphics[height=.65\textwidth,keepaspectratio=true]{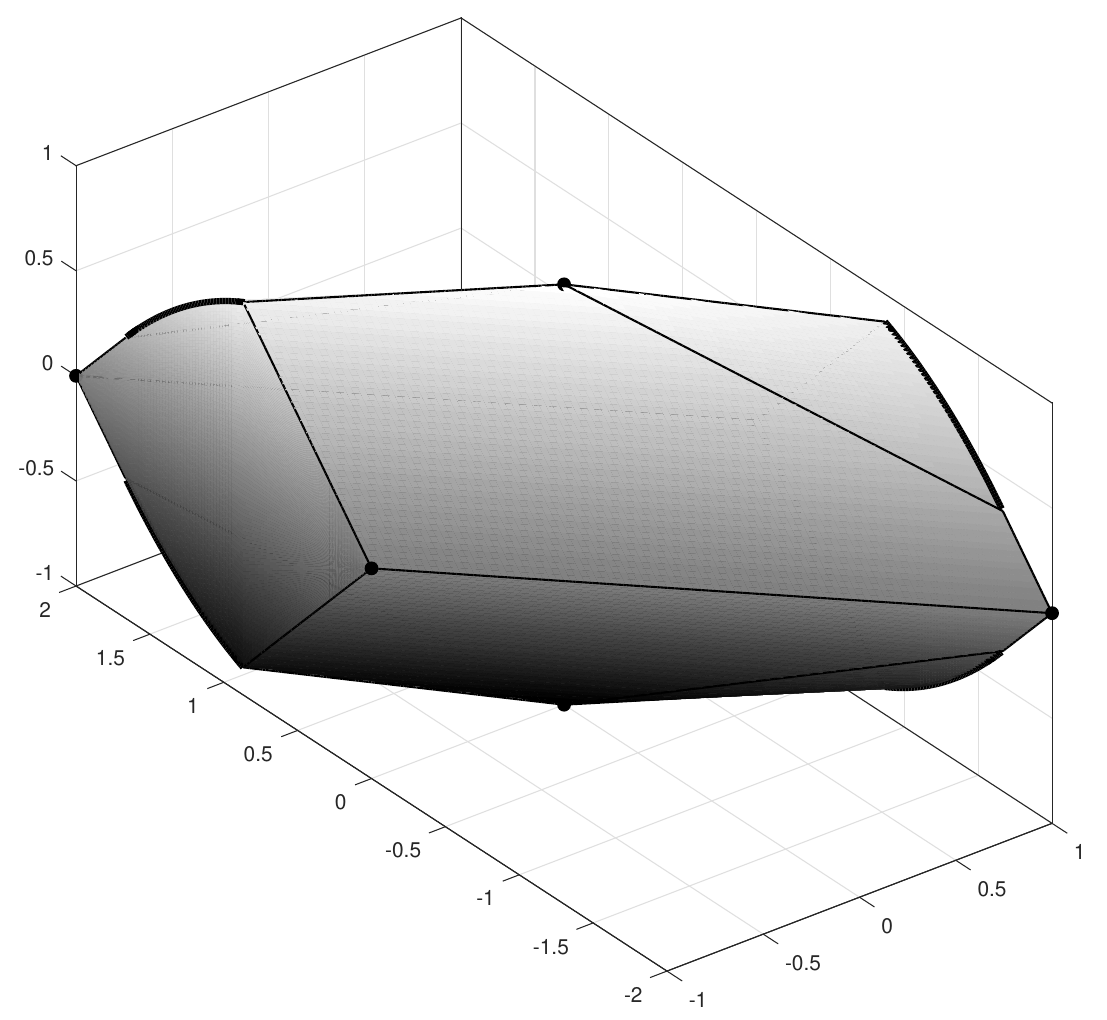}
	\caption{Sketch of ${\mathsf S}^h_{m,n}$ with $\frac{m}{n}>2$ in Case \ref{caseA}. The picture corresponds with the choice $m=5$, $n=2$. The extreme points appear with a thicker line or big dots. The surfaces that form ${\mathsf S^h}_{m,n}$ are delimited by thin lines.}\label{figureball_52}
\end{figure}

%%%%%%%%%%%%%%%%%%
%%%%%%%%%%%%%%%%%%
%%%%%%%%%%%%%%%%%%

\subsection{Case \ref{caseB}: $m$ and $n$ are both even}\mbox{ }\newline
\indent Regarding Case \ref{caseB}, where both $m$ and $n$ are even, the study of the norm and geometry of the unit sphere is highly technical and complex. In this case, we need to consider three different situations: $\frac{n}{m} \in (0, 1/3)$, $\frac{n}{m} \in [1/3, 2/3]$, and $\frac{n}{m} \in (2/3, 1)$. The outcomes differ quite a bit depending on each situation. To make this paper self-contained, we will present these results as well, which can be found in \cite{GMS2023}.

First, regarding the explicit formula of $\vertiii{\cdot}_{m,n}$ in this case, we have the following:

\begin{proposition}[\mbox{\cite[Theroem 2.6]{GMS2023}}]\label{thm:formulaeveneven}
	Let $m,n\in{\mathbb N}$ be even with $m>n$. Consider the sets $A_{m,n}$, $B_{m,n}$, $C_{m,n}$, and $D_{m,n}$, (see Figures \ref{fig:nmMenor13} and \ref{fig:nmEntre13y23}). Provided that $K_{m,n}=\frac{n}{m-n}\left(\frac{m-n}{m}\right)^\frac{m}{n}$ and letting
	\begin{align*}
		\Gamma_{m,n}(a,b,c)&=K_{m,n}|a|\left|\frac{a}{b}\right|^{-\frac{m}{n}} + \sign(b) c,\\
		\Upsilon(a,b,c)&=\left|\frac{a+b}{2}+c\right|+\left|\frac{a+b}{2}\right|,
	\end{align*}
	for all $(a,b,c)\in{\mathbb R}^3$, then the explicit value for $\vertiii{(a,b,c)}_{m,n}$ is given by
	%\begin{equation}\label{eq:formulaeveneven}
		\begin{align*}
			\Gamma_{m,n}(a,b,c)&\text{ if $b\ne 0$ and $\left(\frac{a}{b},\frac{c}{b}\right)\in A_{m,n}$},\\
			\Gamma_{m,m-n}(c,b,a)&\text{ if $b\ne 0$ and $\left(\frac{a}{b},\frac{c}{b}\right)\in B_{m,n}$},\\
			\Upsilon(a,b,c) & \text{ if $b\ne 0$ and $\left(\frac{a}{b},\frac{c}{b}\right)\in C_{m,n}$ or $b=0$ and $|c|\geq |a|$},\\
			\Upsilon(c,b,a) & \text{ if $b\ne 0$ and $\left(\frac{a}{b},\frac{c}{b}\right)\in D_{m,n}$ or $b=0$ and $|c|\leq |a|$.}
		\end{align*}
	%\end{equation}	
\end{proposition}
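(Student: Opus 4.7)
The plan is to reduce the two-variable problem to two one-variable subproblems via identity \eqref{eq:relation}, namely
$\vertiii{(a,b,c)}_{m,n}=\max\{\|(a,b,c)\|_{m,m-n},\,\|(c,b,a)\|_{m,n}\}$,
and to analyse each subproblem explicitly. Since $m$, $n$, and $m-n$ are all even, each one-variable polynomial appearing on the right is an even function of $t$, so we may restrict each sup to $t\in[0,1]$. This is the structural feature that makes Case \ref{caseB} qualitatively cleaner than Case \ref{caseA}, and it lets both subproblems be treated by the same one-variable calculation.

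The core one-variable calculation is as follows. Given $g(t)=\alpha t^m+\beta t^k+\gamma$ with $m>k$ both even, one has $g'(t)=t^{k-1}(m\alpha t^{m-k}+k\beta)$, and the only positive critical point $t_0=\left(\frac{k\,|\beta|}{m\,|\alpha|}\right)^{1/(m-k)}$ lies in $(0,1)$ exactly when $\alpha\beta<0$ and $|\beta/\alpha|<m/k$. A direct substitution gives $g(t_0)=\sign(\beta)\,K_{m,m-k}\,|\alpha|^{1-m/(m-k)}|\beta|^{m/(m-k)}+\gamma$, so that $\sup_{[0,1]}|g|$ is the maximum of the boundary values $|\gamma|$, $|\alpha+\beta+\gamma|$ and (when $t_0\in(0,1)$) $|g(t_0)|$. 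Specialising to $(\alpha,\beta,\gamma,k)=(a,b,c,m-n)$ and to $(c,b,a,n)$, the magnitude of the interior critical value equals precisely $\Gamma_{m,n}(a,b,c)$ and $\Gamma_{m,m-n}(c,b,a)$ respectively, provided $g(t_0)$ has the same sign as $\sign(b)$ (a condition absorbed into the definition of the regions below). The two boundary contributions are $\max(|c|,|a+b+c|)$ and $\max(|a|,|a+b+c|)$; the elementary identity $|x|+|x+c|=\max(|2x+c|,|c|)$, applied with $x=(a+b)/2$, shows that these equal exactly $\Upsilon(a,b,c)$ and $\Upsilon(c,b,a)$.

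The remaining task is to organise the four candidates $\Gamma_{m,n}(a,b,c)$, $\Gamma_{m,m-n}(c,b,a)$, $\Upsilon(a,b,c)$, $\Upsilon(c,b,a)$ into a single piecewise formula. When $b\neq 0$, homogeneity lets us work in the coordinates $(a/b,c/b)$; the conditions ``$t_0\in(0,1)$ and $|g(t_0)|$ dominates for the first subproblem'' cut out $A_{m,n}$, the analogous conditions for the second subproblem cut out $B_{m,n}$, and the complementary regions split into $C_{m,n}$ and $D_{m,n}$ according to which of $\Upsilon(a,b,c)$, $\Upsilon(c,b,a)$ is larger. The degenerate line $b=0$ is handled directly: both subproblems reduce to $|at^m+c|$ and $|ct^m+a|$, which are monotone on $[0,1]$, and their suprema coincide with $\Upsilon(a,0,c)$ or $\Upsilon(c,0,a)$, the dominant one being selected by $|c|\gtrless|a|$.

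The main obstacle is the region bookkeeping. One has to verify that the four regions $A_{m,n},B_{m,n},C_{m,n},D_{m,n}$ tile the plane, that each formula is non-negative on its region (so that $\Gamma$ really equals $|g(t_0)|$ rather than its opposite), and that the four formulas agree continuously on the shared boundaries. Moreover, the qualitative shape of these boundary curves changes according to whether $\tfrac{n}{m}\in(0,\tfrac{1}{3})$, $[\tfrac{1}{3},\tfrac{2}{3}]$, or $(\tfrac{2}{3},1)$, so the combinatorial analysis has to be carried out separately in each of the three sub-cases. Once this case analysis is complete, the formula of the statement follows at once by taking the pointwise maximum of the two one-variable norms.
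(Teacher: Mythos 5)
A preliminary remark on the comparison you asked for: the paper itself contains no proof of Proposition \ref{thm:formulaeveneven}; it is imported from \cite{GMS2023} as background for Case \ref{caseB}, so your attempt can only be measured against the strategy of that cited work and against the analogous Case \ref{caseC} argument that the paper does carry out in Section \ref{sec:formula}. Your skeleton is the right one and matches that strategy: reduce via \eqref{eq:relation} to two one-variable trinomials, use the evenness of $m$, $n$ and $m-n$ to restrict to $[0,1]$, locate the unique interior critical point $t_0$, check that the critical value has modulus $|\Gamma_{m,n}(a,b,c)|$ (resp.\ $|\Gamma_{m,m-n}(c,b,a)|$), and combine the endpoint values through $|x|+|x+c|=\max(|2x+c|,|c|)$ into $\Upsilon(a,b,c)$ and $\Upsilon(c,b,a)$. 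These computations are all correct.

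What they establish, however, is only that $\vertiii{(a,b,c)}_{m,n}$ equals the pointwise maximum of the four candidate quantities (the $\Gamma$'s being admissible only when the corresponding critical point lies in $(0,1)$). The actual content of the proposition is the region-by-region identification: that on $A_{m,n}$ the single expression $\Gamma_{m,n}(a,b,c)$ dominates the other three candidates, and similarly for $B_{m,n}$, $C_{m,n}$, $D_{m,n}$, where these are the concrete sets of \cite{GMS2023} (the ones in the figures), not sets defined a posteriori as ``where each candidate wins''. Your proposal defers precisely this step as ``bookkeeping'', and it is the technically hard part. In particular, on the overlap where both one-variable problems have interior critical points one must decide which of $\Gamma_{m,n}(a,b,c)$ and $\Gamma_{m,m-n}(c,b,a)$ is larger; this comparison is governed by an implicit curve in the $(a/b,c/b)$-plane, and in the even--odd analogue treated in this paper it is exactly the curve $t=\Lambda_{m,n}(b)$ of Lemma~\ref{lem:Lambda}, obtained by an ODE/uniqueness argument, followed by the comparison of the two sides of \eqref{eq_FG_bt}. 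One must also verify nonnegativity of each $\Gamma$ on its region (so that the modulus can be dropped), continuity across the interfaces, and run the analysis separately in the three ranges of $n/m$, since the shape of the regions changes there. None of this is executed, so the proposal is a correct and well-aimed outline rather than a proof of the stated formula.
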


\begin{figure}
	\centering
	\includegraphics[height=0.7\textwidth,keepaspectratio=true]{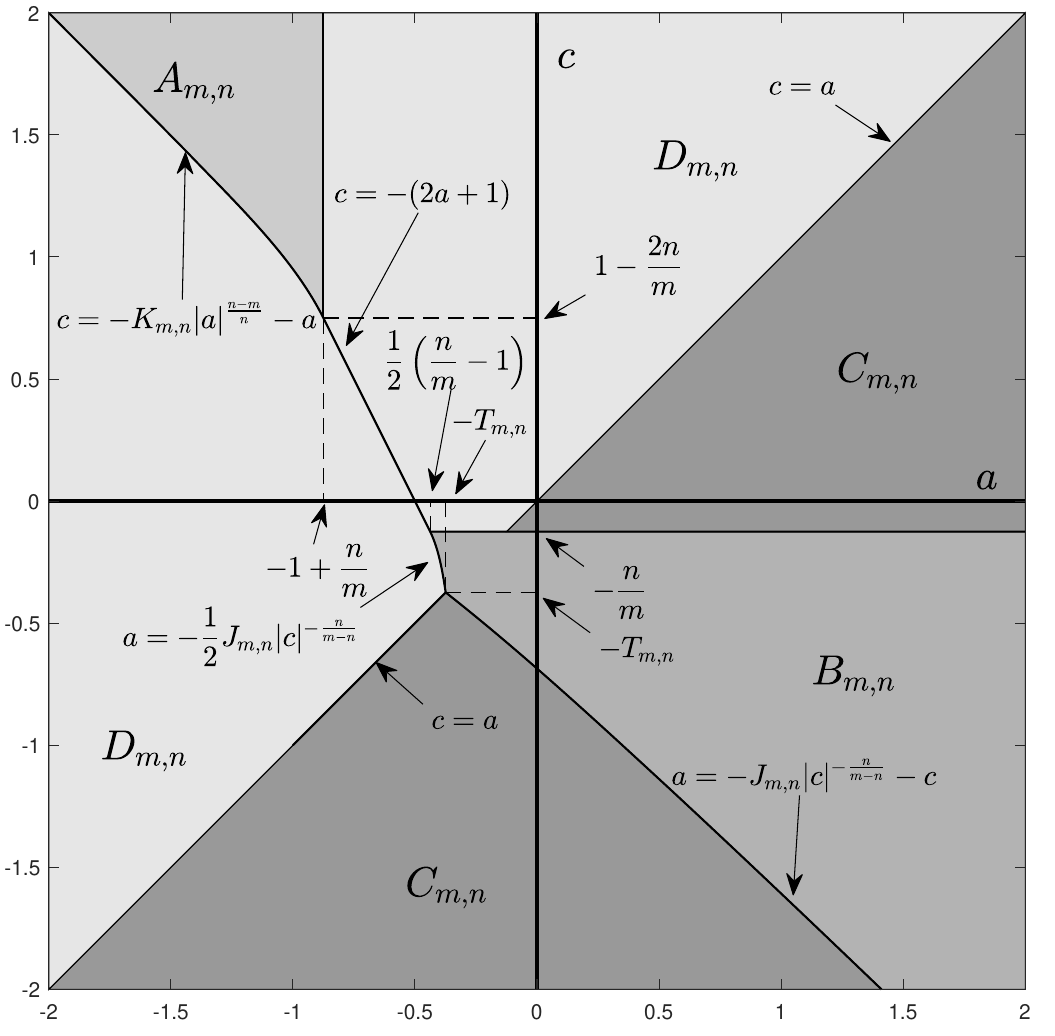}
	\caption{Representation of the sets $A_{m,n}$, $B_{m,n}$, $C_{m,n}$ and $D_{m,n}$ in Case \ref{caseB} for $\frac{n}{m}<\frac{1}{3}$. Here we have considered the case $m=16$ and $n=2$.}\label{fig:nmMenor13}
\end{figure}

\begin{figure}
	\centering
	\includegraphics[height=0.7\textwidth,keepaspectratio=true]{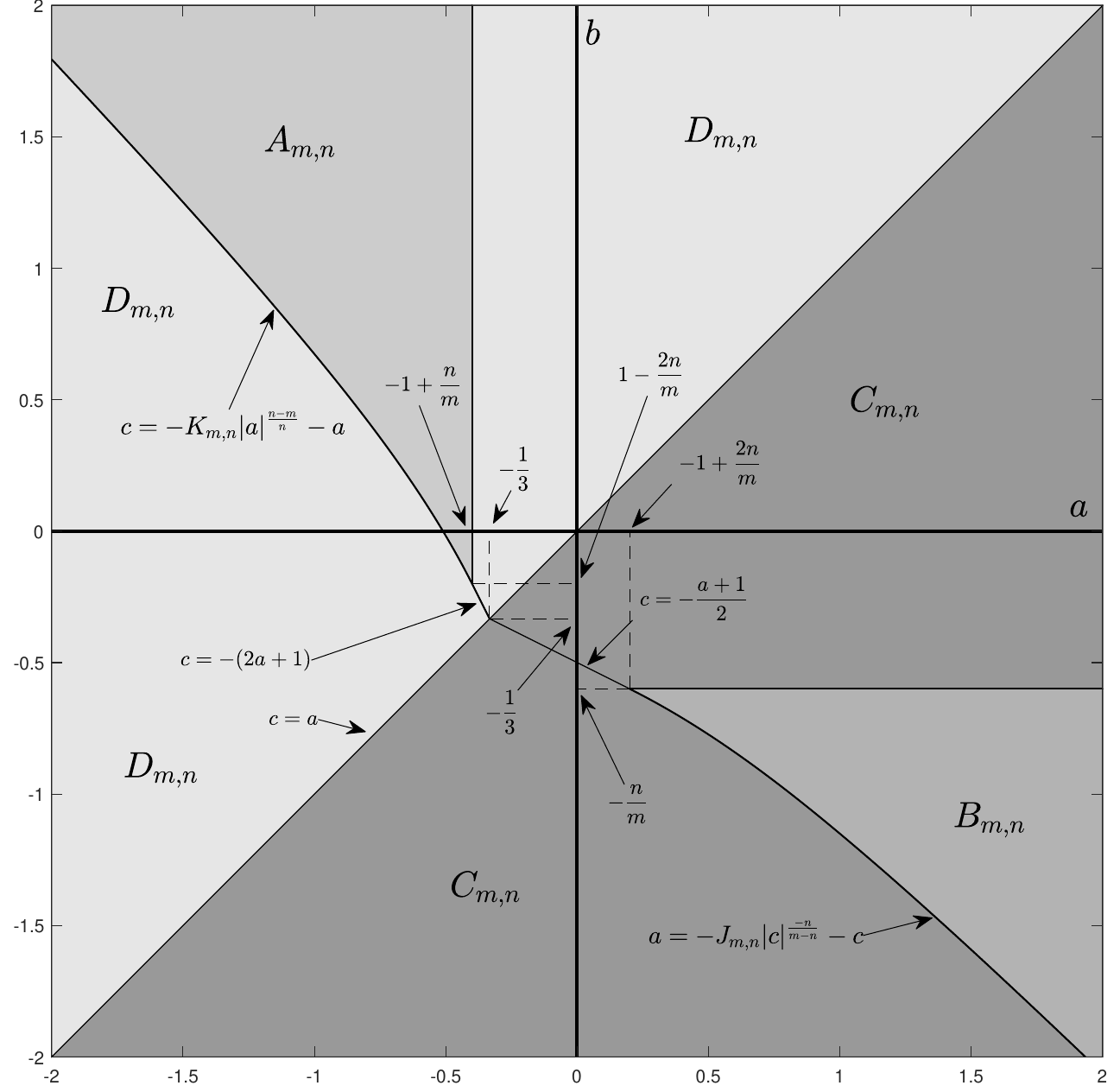}
	\caption{Representation of the sets $A_{m,n}$, $B_{m,n}$, $C_{m,n}$ and $D_{m,n}$ in Case \ref{caseB} for $\frac{1}{3}\leq\frac{n}{m}\leq\frac{2}{3}$. Here we have considered the case $m=30$ and $n=18$.}\label{fig:nmEntre13y23}
\end{figure}

\begin{figure}
	\centering
	\includegraphics[height=.7\textwidth,keepaspectratio=true]{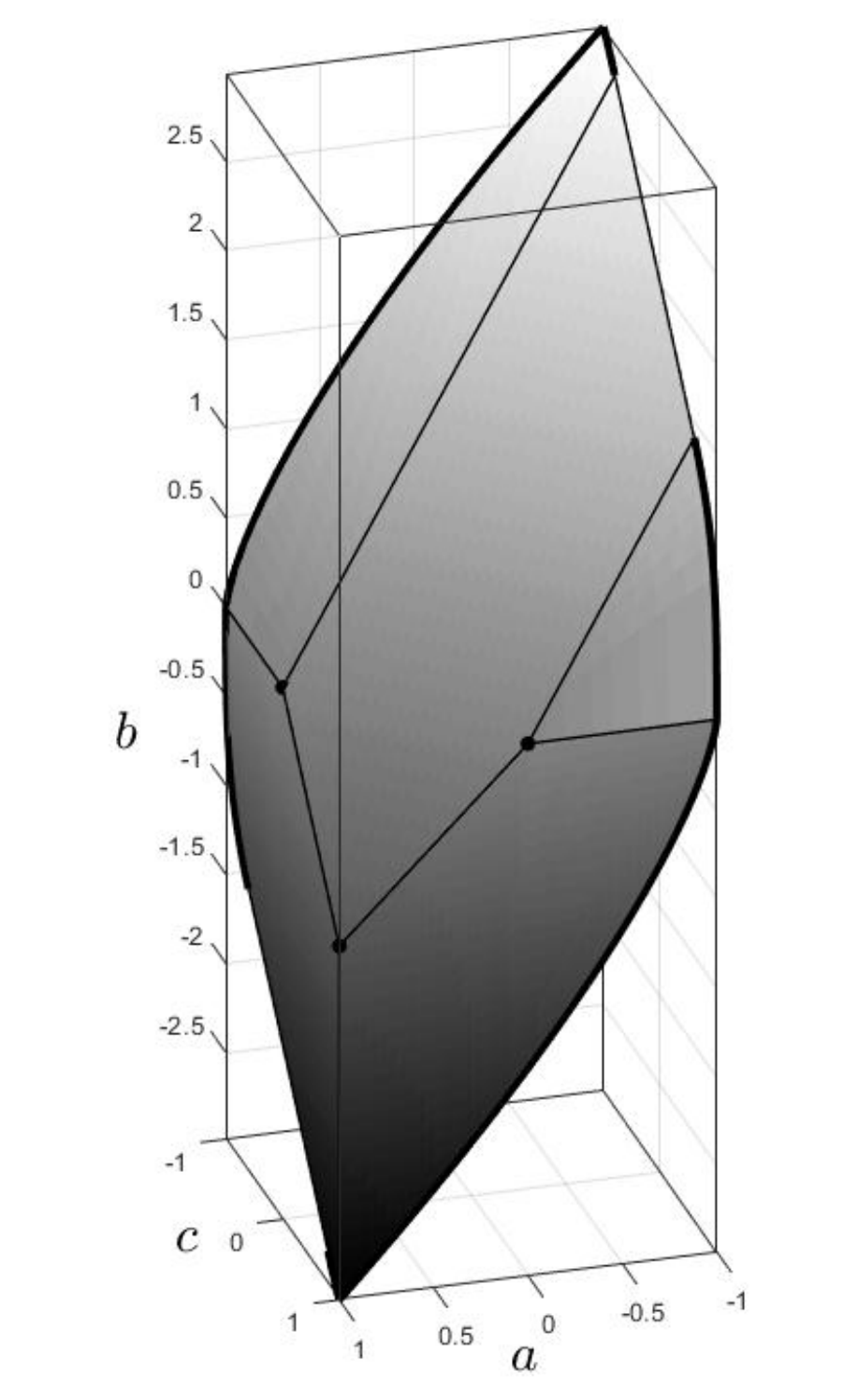}
	\caption{Representation of ${\mathsf S}_{m,n}^h$ in Case \ref{caseB} for $\frac{n}{m}<\frac{1}{3}$. Here we have considered the case $m=28$ and $n=8$. The extreme points have been drawn with a thicker line or isolated dots. The different surfaces that form ${\mathsf S}_{m,n}^h$ are delimited by thin lines. }\label{fig:BallnmLess13}
\end{figure}

\begin{figure}
	\centering
	\includegraphics[height=.7\textwidth,keepaspectratio=true]{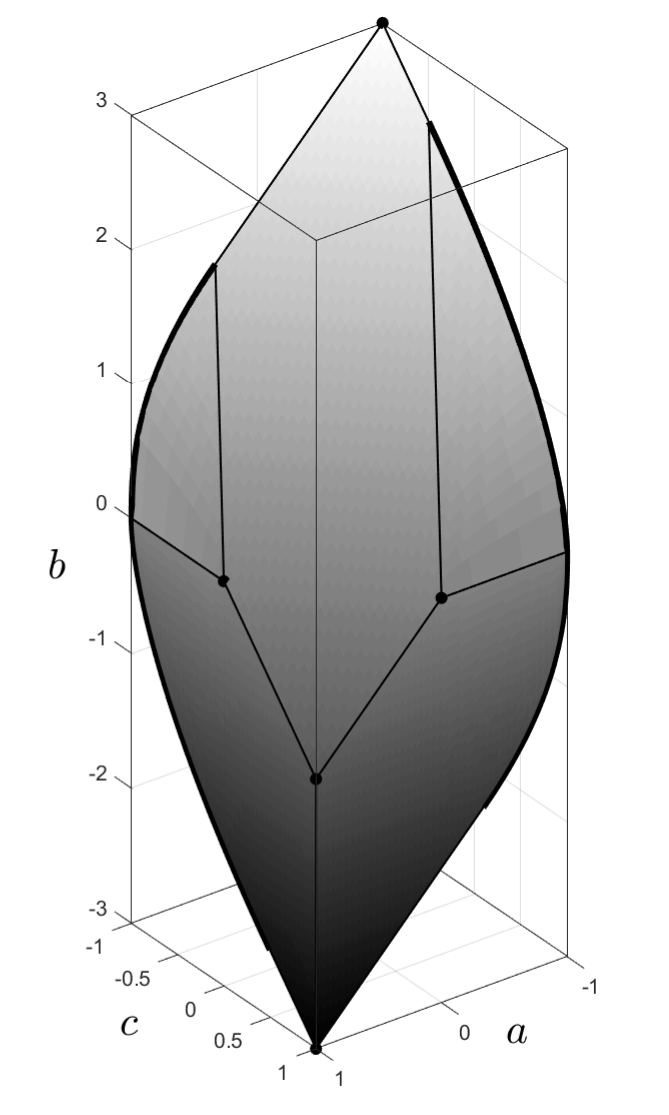}
	\caption{Representation of ${\mathsf S}_{m,n}^h$ in Case \ref{caseB} for $\frac{1}{3}\leq \frac{n}{m}\leq\frac{2}{3}$. Here we have considered the case $m=20$ and $n=12$. The extreme points have been drawn with a thicker line or isolated dots. The different surfaces that form ${\mathsf S}_{m,n}^h$ are delimited by thin lines. }\label{fig:BallnmBetween13and23}
\end{figure}

Of course, the description of the extreme points of the corresponding unit sphere is also hard to tackle; however, this was successfully achieved in \cite{GMS2023}. 

\begin{proposition}[\mbox{\cite[Theorem 4.2]{GMS2023}}]
	Let $m,n\in{\mathbb N}$ be even numbers with $m>n$. Also, as earlier, we let $L_{m,n}=\frac{m}{m-n}\left(\frac{m-n}{n}\right)^\frac{n}{m}$, $\lambda_0=-\frac{n}{m-n}$ and 
	$
	R_{m,n}=2^\frac{m-n}{m} L_{m,n}.
	$
	The set of extreme points $\ext({\mathsf B}_{m,n}^h)$ is given by
	\begin{enumerate}
		\item If $\frac{n}{m}\in\left(0,\frac{1}{3}\right)$ then
		\begin{align*}
			\ext({\mathsf B}_{m,n}^h)&=\left\{\pm\left(-1,L_{m,n}(1-c)^\frac{n}{m},c\right):c\in[1+\lambda_0,1]\right\}\\
			&\quad\quad \bigcup \left\{\pm\left(-1,R_{m,n}|c|^\frac{n}{m},c\right):c\in[-1,2\lambda_0]\right\}\\
			&\quad\quad \bigcup\left\{\pm\left(a,L_{m,n}(1-a)^\frac{m-n}{m},-1\right):a\in[-1,1]\right\}\\
			&\quad\quad \bigcup \left\{\pm(0,0,1),\pm(1,0,0), \pm (1,-1,1)\right\}.
		\end{align*}
		
		\item If $\frac{n}{m}\in\left[\frac{1}{3},\frac{2}{3}\right]$ then
		\begin{align*}
			\ext({\mathsf B}_{m,n}^h)&=\left\{\pm\left(-1,L_{m,n}(1-c)^\frac{n}{m},c\right):c\in[1+\lambda_0,1]\right\}\\
			&\quad\quad \bigcup\left\{\pm\left(a,L_{m,n}(1-a)^\frac{m-n}{m},-1\right):a\in\left[1+\frac{1}{\lambda_0},1\right]\right\}\\
			&\quad\quad \bigcup \left\{\pm(0,0,1),\pm(1,0,0), \pm (1,-1,1), \pm (1,-3,1)\right\}.
		\end{align*}
		
		\item If $\frac{n}{m}\in \left(\frac{2}{3},1\right)$ then
		\begin{align*}
			\ext({\mathsf B}_{m,n}^h)&=\left\{\pm\left(-1,L_{m,n}(1-c)^\frac{n}{m},c\right):c\in[-1,1]\right\}\\
			&\quad\quad \bigcup \left\{\pm\left(a,R_{m,m-n}|a|^\frac{m-n}{m},-1\right):a\in\left[-1,\frac{2}{\lambda_0}\right]\right\}\\
			&\quad\quad \bigcup\left\{\pm\left(a,L_{m,n}(1-a)^\frac{m-n}{m},-1\right):a\in\left[1+\frac{1}{\lambda_0},1\right]\right\}\\
			&\quad\quad \bigcup \left\{\pm(0,0,1),\pm(1,0,0), \pm (1,-1,1)\right\}.
		\end{align*}
	\end{enumerate}
\end{proposition}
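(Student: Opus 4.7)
The plan is to combine Proposition \ref{thm:formulaeveneven} with a facet-by-facet analysis of the unit sphere ${\mathsf S}^h_{m,n}$. By that proposition, ${\mathsf S}^h_{m,n}$ decomposes into finitely many surface patches, one for each of the regions $A_{m,n}$, $B_{m,n}$, $C_{m,n}$, $D_{m,n}$ (plus the degenerate stratum $b=0$). On the patches coming from $C_{m,n}$ and $D_{m,n}$ the norm is the piecewise affine function $\Upsilon(a,b,c)=\left|\frac{a+b}{2}+c\right|+\left|\frac{a+b}{2}\right|$ (resp.\ $\Upsilon(c,b,a)$), so these patches are (finite unions of) planar faces of ${\mathsf B}^h_{m,n}$. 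On the patches coming from $A_{m,n}$ and $B_{m,n}$ the norm is $\Gamma_{m,n}$ (resp.\ $\Gamma_{m,m-n}$), which after rewriting as $K_{m,n}|a|^{1-m/n}|b|^{m/n}+\sign(b)c=1$ defines a strictly convex graph, because $m/n>1$.

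Step 1 (curved patches give extreme points). Every point in the relative interior of a strictly convex patch is automatically extreme in ${\mathsf B}^h_{m,n}$, since a supporting hyperplane there touches the ball at a single point. I would scale the patches so that one coordinate equals $\pm 1$ and read off the two graphs $(-1,L_{m,n}(1-c)^{n/m},c)$ and $(a,L_{m,n}(1-a)^{(m-n)/m},-1)$ that parametrize the $A_{m,n}$- and $B_{m,n}$-patches. When $\frac{n}{m}<\frac{1}{3}$, the region $A_{m,n}$ in Figure~\ref{fig:nmMenor13} has a second component that produces the extra arc $(-1,R_{m,n}|c|^{n/m},c)$, and symmetrically for $\frac{n}{m}>\frac{2}{3}$ via the swap $(a,b,c)\leftrightarrow(c,b,a)$ combined with \eqref{eq:reduction}.

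Step 2 (flat patches contribute only vertices). On each $\Upsilon$-facet, a standard convex-combination argument shows that interior and edge-interior points are not extreme, so only the vertices of the underlying polygon survive. Working out these vertices explicitly yields the isolated extreme points $\pm(1,0,0)$, $\pm(0,0,1)$, $\pm(1,-1,1)$ in every subcase, together with the additional vertex $\pm(1,-3,1)$ in the middle subcase $\frac{n}{m}\in[\frac{1}{3},\frac{2}{3}]$, which appears precisely because the two $\Upsilon$-facets meet along a different ridge in that regime.

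Step 3 (ranges of the arcs and their endpoints). The parameter intervals $[1+\lambda_0,1]$, $[-1,2\lambda_0]$, $[1+1/\lambda_0,1]$ and $[-1,2/\lambda_0]$ are obtained by intersecting the convex patches with the boundary curves of $A_{m,n},B_{m,n},C_{m,n},D_{m,n}$; these intersections coincide with vertices of the flat facets, so continuity of the parametrizations yields the stated endpoints. The $\pm$-symmetry of ${\mathsf B}^h_{m,n}$ and the involution given by \eqref{eq:reduction} halve the work and explain why the three lists are symmetric under $(a,b,c)\mapsto(c,b,a)$ after swapping $n$ with $m-n$.

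The main obstacle is Step 3: one must verify that the endpoints of each convex arc are exactly the vertices of the neighbouring $\Upsilon$-facets, and that points on the arcs do not accidentally lie in another supporting hyperplane of ${\mathsf B}^h_{m,n}$ (which would kill extremality). This amounts to checking that the convex patch and the adjacent flat face share only a $C^1$-tangency along the boundary curve, something that can be done by computing the implicit derivative of the equation $\Gamma_{m,n}(a,b,c)=1$ at the boundary and comparing it with the slope of the flat facet. Once this matching is established in each of the three ranges of $\frac{n}{m}$, the union of Steps 1--3 exhausts ${\mathsf S}^h_{m,n}$ and the claimed lists of extreme points follow.
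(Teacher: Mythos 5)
Your proposal does not match the paper, because the paper does not prove this statement at all: it is quoted verbatim from \cite{GMS2023} (Theorem 4.2 there), so the only ``proof'' in this paper is the citation. Judging your sketch on its own merits, it contains a genuine error at its core. In Step 1 you claim that the level surface $K_{m,n}|a|^{1-\frac{m}{n}}|b|^{\frac{m}{n}}+\sign(b)\,c=1$ is a strictly convex graph ``because $m/n>1$'', and you deduce that every relative interior point of the $A_{m,n}$- and $B_{m,n}$-patches is an extreme point. This is false: the function $|a|^{1-\frac{m}{n}}|b|^{\frac{m}{n}}$ is positively homogeneous of degree $1$ in $(a,b)$, so for $b>0$ the patch $c=1-K_{m,n}|a|^{1-\frac{m}{n}}b^{\frac{m}{n}}$ contains, for every fixed ratio $b/a$, the whole line segment through the apex $(0,0,1)$ in the direction $\bigl(a_0,b_0,-K_{m,n}|a_0|^{1-\frac{m}{n}}b_0^{\frac{m}{n}}\bigr)$; these patches are ruled (conical) surfaces, exactly like the ruled pieces exploited in Section \ref{sec:extreme} of this paper for Case \ref{caseC}. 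Consequently no relative interior point of such a patch is extreme, and your Step 1, taken literally, would produce a two-dimensional family of extreme points, contradicting the very statement you are proving (whose extreme set consists of one-parameter curves plus isolated points).

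Because of this, the proposal also lacks the positive half of the argument: once the patch interiors and the ruling-segment interiors are discarded, one still has to show that the points on the edge curves, e.g.\ $\pm\bigl(-1,L_{m,n}(1-c)^{\frac{n}{m}},c\bigr)$ and $\pm\bigl(a,L_{m,n}(1-a)^{\frac{m-n}{m}},-1\bigr)$, \emph{are} extreme, which requires constructing supporting hyperplanes of ${\mathsf B}_{m,n}^h$ meeting the ball only at the given point (this is precisely the delicate part carried out in \cite{GMS2023}, and the analogue of the hyperplane construction in the proof of Theorem \ref{thm_extreme_points} here). Your Step 3 gestures at a $C^1$-tangency check, but that is aimed at the wrong issue (smooth matching of adjacent pieces does not by itself decide extremality). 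The remaining ingredients --- the convex-combination argument on the $\Upsilon$-facets in Step 2 and the use of \eqref{eq:reduction} and central symmetry to halve the case analysis --- are sound, but the extremality mechanism for the curved parts must be replaced by the ruled-surface plus supporting-hyperplane argument before the claimed lists can be obtained.
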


Some sketches of the unit spheres together with the corresponding sets of extreme points are shown in Figures \ref{fig:BallnmLess13} and \ref{fig:BallnmBetween13and23}.

Certainly, as the reader can see, Case \ref{caseB} is way more complex than Case \ref{caseA}, at least at first sight. In fact, delving into the details and technicalities of the constructions from \cite{JMR2021} or \cite{GMS2023} (see also the recent monograph \cite{FGMMRS}), one realizes that these types of problems are far from easy to tackle.

%%%%%%%%%%%%%%%%%%
%%%%%%%%%%%%%%%%%%
%%%%%%%%%%%%%%%%%%

\subsection{Outline of our paper}\mbox{ }\newline
\indent So far, we have briefly discussed the formulas for $\vertiii{\cdot}_{m,n}$ and the complete descriptions of the extreme points of ${\mathsf B}_{m,n}^h$ in Case \ref{caseA} and Case \ref{caseB}. 
One of the most challenging aspects of solving Case \ref{caseC} is that many of the equations that naturally arise in the computation are implicit, unlike the explicit equations encountered in Case \ref{caseA} or Case \ref{caseB}. One way to address this difficulty is by implementing an appropriate change of variables, which requires considering two different coordinate systems simultaneously. Despite these difficulties, the rest of the paper will be devoted to solving Case \ref{caseC}. From now on, unless otherwise specified, $m$ and $n$ will be positive integers such that $m > n$, with $m$ being even and $n$ being odd. Solving this final case will complete the general analysis, covering all possibilities for $m$ and $n$.

The outline of the rest of this paper is as follows: Section \ref{sec:formula} is devoted to obtain an explicit formula for $\vertiii{\cdot}_{m,n}$ for all positive integers $m>n$ with $m$ even and $n$ odd. The formula obtained in Section \ref{sec:formula} will be used in Section \ref{sec:projection} to calculate the projection of ${\mathsf B}^h_{m,n}$ over the plane $ac$ together with a parametrization of ${\mathsf S}^h_{m,n}$. Thanks to the results in Sections \ref{sec:formula} and \ref{sec:projection}, we will be able to complete this thorough study by providing the extreme points of ${\mathsf B}^h_{m,n}$ in Section \ref{sec:extreme}; thus resolving Case \ref{caseC}. Additionally, the sphere ${\mathsf S}^h_{m,n}$ will be sketched for several choices of $m$ and $n$.

The following notations will be useful to understand the rest of the paper. If $C$ is a convex body, $\ext(C)$ will denote the set of extreme points of $C$. Also,
$\pi_{ac}$ will denote the linear projection given by
$\pi_{ac}(a,b,c)=(a,c)$, for every $(a,b,c)\in{\mathbb R}^3$. The 
plots of ${\mathsf S}^h_{m,n}$ and the projection $\pi_{ac}({\mathsf S}^h_{m,n})$, together with some other figures appearing in this paper were generated
using {\em MATLAB}. All graphs presented here are scaled.

\section{A formula for $\vertiii{\cdot}_{m,n}$ with $m$ even and $n$ odd}\label{sec:formula}

Given $m, n \in \mathbb{N}$ with $m >n$ and $(a,b,c) \in \mathbb{R}^3$, recall from \eqref{eq:relation} and \eqref{eq:reduction} that
\begin{equation*}%\label{eq:relation}
	\vertiii{(a,b,c)}_{m,n}=\max\{\|(a,b,c)\|_{m,m-n},\|(c,b,a)\|_{m,n}\},
\end{equation*}
and
\[
\vertiii{(a,b,c)}_{m,n}=\vertiii{(c,b,a)}_{m,m-n}.
\]
The second identity allows us to simplify some of the forthcoming proofs in the sense that it will be enough to consider the case where $\frac{n}{m}\leq \frac{1}{2}$.

In \cite{MS}, the authors derived a formula to calculate $\|(a,b,c)\|_{m,m-n}$ and $\|(c,b,a)\|_{m,n}$, which we will state for completeness in the following result:
\begin{theorem}[\mbox{Mu\~noz and Seoane, \cite[Theorem 4.1]{MS}}]\label{evenEvenTheorem}
	For every $m,n\in{\mathbb N}$ with $m$ even, $n$ odd and $m>n$, let
	us define ${\mathcal I}_{m,n}$ as the set of triples
	$(a,b,c)\in\mathbb{R}^3$ such that
	$$
	a\ne 0,\ \left|\frac{nb}{ma}\right|< 1\text{ and } 1+\frac{c}{a}<\frac{1}{2}\left[\frac{m-n}{n}\left(\left|\frac{nb}{ma}\right|\right)^\frac{m}{m-n}-\left|\frac{b}{a}\right|+1\right].
	$$
	Then we have
	\begin{equation}\label{formulaEvenEven}
		\|(a,b,c)\|_{m,n}=\begin{cases}
			\left| \frac{(m-n)a}{n} \left(\left|\frac{nb}{ma}\right|\right)^{\frac{m}{m-n}} - c
			\right|&\text{if $(a,b,c) \in \mathcal{I}_{m,n}$,}\\
			\left|a+c\right|+\left|b\right|&\text{otherwise.}
		\end{cases}
	\end{equation}
\end{theorem}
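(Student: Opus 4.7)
\emph{Proof plan.} The strategy is a direct critical-point analysis of $p(x)=ax^m+bx^n+c$ on $[-1,1]$. By the symmetry $(a,b,c)\mapsto -(a,b,c)$ we may assume $a>0$; the case $a=0$ is immediate since then $\|bx^n+c\|_{m,n}=|b|+|c|=|a+c|+|b|$. Writing
\[
p'(x)=x^{n-1}\bigl(max^{m-n}+nb\bigr)
\]
and using that $n-1$ is even while $m-n$ is odd, the factor $max^{m-n}+nb$ is strictly monotone on $\mathbb{R}$ and vanishes at the unique real point $x_0=-\sign(b)\left|\frac{nb}{ma}\right|^{1/(m-n)}$. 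Because $x^{n-1}\ge 0$, the sign of $p'$ agrees with that of $max^{m-n}+nb$, so $x_0$ is the unique extremum of $p$ on $\mathbb{R}$ (a global minimum); the critical point at $x=0$ (present when $n\ge 3$ and $b\ne 0$) is merely an inflection, since the first nonzero Taylor term of $p-c$ at $0$ has odd degree $n$. Hence $\|p\|_{m,n}=|a+c|+|b|$ whenever $|x_0|\ge 1$, and $\|p\|_{m,n}=\max\{|a+c|+|b|,\,|p(x_0)|\}$ when $|x_0|<1$.

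\emph{Evaluating $p(x_0)$.} The endpoint values give $\max\{|a+b+c|,|a-b+c|\}=|a+c|+|b|$. For the interior value, the identity $ax_0^m+bx_0^n=\frac{(m-n)b}{m}x_0^n$ together with $x_0^m=|x_0|^m=\left|\frac{nb}{ma}\right|^{m/(m-n)}$ (using $m$ even) and $x_0^n=-\sign(b/a)\left|\frac{nb}{ma}\right|^{n/(m-n)}$ (using $n$ odd) produces, after a short algebraic manipulation,
\[
p(x_0)=c-\alpha,\qquad \alpha:=\frac{(m-n)a}{n}\left|\frac{nb}{ma}\right|^{m/(m-n)}\ge 0,
\]
so $|p(x_0)|=\bigl|\frac{(m-n)a}{n}\bigl|\frac{nb}{ma}\bigr|^{m/(m-n)}-c\bigr|$, matching the first branch of the claimed formula. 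The condition $|x_0|<1$ is precisely $\left|\frac{nb}{ma}\right|<1$, the second clause of the definition of $\mathcal{I}_{m,n}$.

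\emph{The crucial comparison $|p(x_0)|\ge |a+c|+|b|$.} The subcase $c\ge\alpha$ is ruled out immediately, since it would force $\alpha\le -a<0$. So $|p(x_0)|=\alpha-c$, and the inequality becomes $\alpha-c-|b|\ge |a+c|$, i.e., the pair
\[
\text{(I) } \alpha\ge a+2c+|b|\qquad\text{and}\qquad \text{(II) } \alpha\ge |b|-a.
\]
I expect (II) to be the principal obstacle; it is handled via the following one-variable lemma. Setting $t=\left|\frac{nb}{ma}\right|\in[0,1]$, (II) is equivalent to $\phi(t):=(m-n)t^{m/(m-n)}-mt+n\ge 0$; and since $\phi(1)=0$ while $\phi'(t)=m\bigl(t^{n/(m-n)}-1\bigr)\le 0$ on $[0,1]$, $\phi$ is nonincreasing there and stays $\ge\phi(1)=0$. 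Thus (II) is automatic under $|x_0|\le 1$, so only (I) is binding. Dividing (I) by $2a>0$ and rearranging produces exactly the third clause of $\mathcal{I}_{m,n}$. Putting everything together: when $(a,b,c)\in\mathcal{I}_{m,n}$ the interior value dominates and $\|p\|_{m,n}=|p(x_0)|$; otherwise (either $a=0$, $|x_0|\ge 1$, or (I) fails) the endpoint value wins and $\|p\|_{m,n}=|a+c|+|b|$.
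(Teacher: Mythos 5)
Your proof is correct. Note that the paper does not prove Theorem \ref{evenEvenTheorem} at all: it is quoted verbatim from Mu\~noz--Seoane \cite{MS}*{Theorem 4.1}, so there is no internal argument to compare with; your self-contained critical-point analysis is the natural (and essentially the standard) route for such trinomial norm formulas. The key steps all check out: since $m-n$ is odd, the factorization $p'(x)=x^{n-1}\bigl(max^{m-n}+nb\bigr)$ does yield a single interior extremum $x_0$ with $p(x_0)=c-\alpha$, $\alpha=\frac{(m-n)a}{n}\left|\frac{nb}{ma}\right|^{m/(m-n)}$ (for $a>0$ after the harmless symmetry reduction), the case $c>\alpha$ can never produce interior dominance, condition (II) is indeed automatic on $[0,1]$ because $\phi(t)=(m-n)t^{m/(m-n)}-mt+n$ is nonincreasing with $\phi(1)=0$, and rescaling (I) by $2a$ recovers exactly the third defining inequality of $\mathcal{I}_{m,n}$, with the boundary case $\alpha=a+2c+|b|$ giving equal values on both branches, so the dichotomy in \eqref{formulaEvenEven} is exactly reproduced.
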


In the first main result of this paper, we will derive a formula to calculate $\vertiii{(a,b,c)}_{m,n}$ for the case where $m$ is even, $n$ is odd, and $m \ge 2n$. We will introduce necessary definitions and auxiliary results.

\begin{lemma}\label{lem:AmnBmn}
	Let $m$ and $n$ be positive integers such that $m$ is even, $n$ is odd and $m\ge 2n$. Consider the regions $\mathcal{I}_{m,n}$ and $\mathcal{I}_{m,m-n}$ from Theorem \ref{evenEvenTheorem}. Define
	\begin{align*}
	{\mathbb A}_{m,n}&=\{(b,c)\in{\mathbb R}^2:(1,b,c)\in{\mathcal I}_{m,m-n}\},\\
	{\mathbb B}_{m,n}&=\{(b,c)\in{\mathbb R}^2:(c,b,1)\in{\mathcal I}_{m,n}\}.
	\end{align*}
	Then 
	\begin{enumerate}
		\item ${\mathbb A}_{m,n}=\left\{(\pm b,c):c< -1/2,\ 0\leq b< \frac{m}{m-n}\text{ and } f(b)\leq \frac{nb}{mc}\right\}$ and
		\item ${\mathbb B}_{m,n}=\left\{(\pm b,c):c\ne 0,\ -1< \frac{nb}{mc}\leq 0\text{ and } 0\leq b \leq g\left(\frac{nb}{mc}\right)\right\}$,
	\end{enumerate}
	where
	\begin{align*}
	f(b)&=\frac{2nb}{mK_{m,n}b^\frac{m}{n}-mb-m},\\
	g(t)&=\frac{2mt}{(m-n)t^\frac{m}{m-n} +mt-n}
	\end{align*}
	and $K_{m,n}=\frac{n}{m-n}\left(\frac{m-n}{m}\right)^\frac{m}{n}$.
\end{lemma}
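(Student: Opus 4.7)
The plan is to unwind the definitions directly. In each part one substitutes the specified triple ($(1,b,c)$ for $\mathbb{A}_{m,n}$, $(c,b,1)$ for $\mathbb{B}_{m,n}$) into the three conditions defining $\mathcal{I}_{m,m-n}$ or $\mathcal{I}_{m,n}$ from Theorem \ref{evenEvenTheorem}, observes that each of the resulting conditions depends on $b$ only through $|b|$ (which immediately justifies the $\pm b$ parametrization with $b\geq 0$), and then rearranges the remaining inequality into the claimed form.

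For (1), plugging $a=1$ into $\mathcal{I}_{m,m-n}$ and using that $\frac{n}{m-n}\bigl|(m-n)/m\bigr|^{m/n}=K_{m,n}$, the non-trivial conditions reduce to $0\leq |b|<m/(m-n)$ together with
\[
2c \;<\; K_{m,n}|b|^{m/n}-|b|-1.
\]
A quick monotonicity check on $[0,m/(m-n)]$ (using only $m>n$) shows that the right-hand side attains its maximum $-1$ at $b=0$ and decreases to $-2$ at $b=m/(m-n)$; in particular $c<-1/2$, and the quantity $D(b):=K_{m,n}b^{m/n}-b-1$ is strictly negative throughout. Since both $c$ and $D(b)$ are negative, for $b>0$ the inequality $2c<D(b)$ is equivalent (via cross-multiplication with careful sign tracking) to $\frac{2nb}{mD(b)}\leq\frac{nb}{mc}$, which is precisely $f(b)\leq nb/(mc)$; the case $b=0$ gives $f(0)=0\leq 0$ trivially.

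For (2), plugging $(a,b,c)\mapsto(c,b,1)$ into $\mathcal{I}_{m,n}$ yields $c\neq 0$, $|nb/(mc)|<1$, and
\[
1+\tfrac{1}{c}\;<\;\tfrac{1}{2}\!\left[\tfrac{m-n}{n}\bigl|nb/(mc)\bigr|^{m/(m-n)}-|b/c|+1\right].
\]
Introducing the natural variable $t=nb/(mc)$ (so that $|b/c|=m|t|/n$) and clearing denominators converts this into
\[
\tfrac{2n}{c}\;<\;(m-n)|t|^{m/(m-n)}-m|t|-n.
\]
A one-variable calculus check shows the right-hand side is strictly negative on $|t|<1$, which forces $c<0$; combined with $b\geq 0$ this yields $t\in(-1,0]$ and hence $|t|=-t$. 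On this range, isolating $b$ from the inequality (after verifying that the denominator $(m-n)t^{m/(m-n)}+mt-n$ of $g$ is negative) produces exactly $b\leq g(t)$.

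The main obstacle is the sign bookkeeping needed to pass from the inequalities of Theorem \ref{evenEvenTheorem}, which are \emph{linear} in $c$ (resp.\ $1/c$), to the \emph{multiplicative} forms involving $f$ and $g$. The essential points to secure are (i) the sign of the two denominators $mK_{m,n}b^{m/n}-mb-m$ and $(m-n)t^{m/(m-n)}+mt-n$ on the relevant ranges, and (ii) that the implicit sign conditions ($c<-1/2$ in (1) and $-1<t\leq 0$ in (2)) are automatic consequences of the original inequalities rather than extra hypotheses. Once these sign facts are established, the equivalence of the two descriptions of $\mathbb{A}_{m,n}$ and $\mathbb{B}_{m,n}$ is a purely algebraic manipulation.
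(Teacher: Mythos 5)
Your proposal is correct and follows essentially the same route as the paper's own proof: substitute the triples into the defining conditions of $\mathcal{I}_{m,m-n}$ and $\mathcal{I}_{m,n}$, use evenness in $b$ to reduce to $b\geq 0$, establish negativity of the auxiliary expressions $K_{m,n}|b|^{m/n}-|b|-1$ and $(m-n)|t|^{m/(m-n)}-m|t|-n$ by a one-variable monotonicity check (which yields $c<-1/2$, respectively $t\in(-1,0]$), and then rearrange with the same sign bookkeeping, using the same change of variable $t=\frac{nb}{mc}$, to reach $f(b)\leq\frac{nb}{mc}$ and $b\leq g(t)$. The only blemish, namely treating the strict inequalities as equivalent to the non-strict forms on the boundary curves $f(b)=\frac{nb}{mc}$ and $b=g(t)$ (and the $b=0$ case in (2)), is shared by the paper's own statement and proof, so it is not a gap relative to the paper.
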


\begin{proof}
Let us prove first $(1)$. Using the definition of ${\mathcal I}_{m,m-n}$, we have that $(b,c)\in{\mathbb A}_{m,n}$ if and only if
	$$
	|b|<\frac{m}{m-n}
	$$
and
	$$
	c<\frac{1}{2}\left[K_{m,n}|b|^\frac{m}{n}-|b|-1\right].
	$$
Let $h_1(b)=\frac{1}{2}\left[K_{m,n}|b|^\frac{m}{n}-|b|-1\right]$ for $b\in[-\frac{m}{m-n},\frac{m}{m-n}]$. Since $h_1$ is even, we just need to investigate $h_1$ in $[0,\frac{m}{m-n}]$, where $f$ is given by $h_1(b)=\frac{1}{2}\left[K_{m,n}b^\frac{m}{n}-b-1\right]$. It is elementary to show that $h_1'(b)=\frac{m}{n}K_{m,n}b^\frac{m-n}{n}-1$ vanishes only at $b=\frac{m}{m-n}$ and that $h_1''(\frac{m}{m-n})>0$. Hence $h_1$ has its absolute miminum at $b=\frac{m}{m-n}$ and therefore $h_1$ is strictly decreasing in $[0,\frac{m}{m-n}]$. Consequently $h_1$ attains it maximun at $b=0$ and so $h_1(b)\leq h(0)=-1/2$ for all $b\in[-\frac{m}{m-n},\frac{m}{m-n}]$. Now observe that $(b,c)\in {\mathbb A}_{m,n}$ if and only if $(-b,c)\in {\mathbb A}_{m,n}$. This allows us to focus on the pairs $(b,c)$ with non negative $b$. Let us take $b\ge 0$. Assume first that $b>0$. Then $(b,c)\in{\mathbb A}_{m,n}$ is equivalent to
	$$
	0<b<\frac{m}{m-n}\quad\text{and}\quad c<\frac{1}{2}\left[K_{m,n}|b|^\frac{m}{n}-|b|-1\right]\leq-\frac{1}{2}.
	$$
On the other hand, the inequality $c<\frac{1}{2}\left[K_{m,n}|b|^\frac{m}{n}-|b|-1\right]$ is equivalent to
	$$
	\frac{2}{K_{m,n}|b|^\frac{m}{n}-|b|-1}<\frac{1}{c}.
	$$
Then multiplying the previous inequality by $\frac{nb}{m}$ (which is positive) we arrive at the equivalent condition
	$$
	\frac{2nb}{mK_{m,n}|b|^\frac{m}{n}-m|b|-m}<\frac{nb}{mc}
	$$
or 
	$$
	f(b)<\frac{nb}{mc}.
	$$
If $b=0$ it is straightforward that $(0,c)\in {\mathbb A}_{m,n}$ if and only if $c<h(0)=-1/2$. Also $f(0)=0$. Then we have shown that, for $b$ with $0\le b<\frac{m}{m-n}$, $(b,c)\in{\mathbb A}_{m,n}$ if and only if
	$$
	c< -\frac{1}{2}
	$$
and 
	$$
	f(b)\leq \frac{nb}{mc},
	$$
finishing the proof of $(1)$.

Now we prove $(2)$. From the definition of ${\mathcal I}_{m,n}$ (see Theorem \ref{evenEvenTheorem}), $(a,c)\in {\mathbb B}_{m,n}$ is equivalent to
	\begin{equation}\label{eq:Imn1}
	c\ne 0,\quad\left|\frac{b}{c}\right|<\frac{m}{n}
	\end{equation}
and
	\begin{equation}\label{eq:Imn2}
	\frac{1}{c}<\frac{1}{2}\left[K_{m,m-n}\left|\frac{b}{c}\right|^\frac{m}{m-n}-\left|\frac{b}{c}\right|-1\right].
	\end{equation}
Clearly, $(b,c)\in{\mathbb B}_{m,n}$ if and only if $(-b,c)\in{\mathbb B}_{m,n}$. In the rest of the proof we may assume then that $b\ge 0$. Assume first that $b>0$. Writing \eqref{eq:Imn1} and \eqref{eq:Imn2} in terms of $t=\frac{nb}{mc}$ we obtain
	$$
	c\ne 0,\quad \left|\frac{nb}{mc}\right|<1
	$$
and
	\begin{align*}
	\frac{m}{nb}\frac{nb}{mc}&<\frac{1}{2}\left[K_{m,m-n}\left(\frac{m}{n}\right)^\frac{m}{m-n}\left|\frac{nb}{mc}\right|^\frac{m}{m-n}-\frac{m}{n}\left|\frac{nb}{mc}\right|-1\right]\\
	&=\frac{1}{2}\left[\frac{m-n}{n}\left|\frac{nb}{mc}\right|^\frac{m}{m-n}-\frac{m}{n}\left|\frac{nb}{mc}\right|-1\right]
	\end{align*}
or equivalently
	$$
	c\ne 0,\quad\left|t\right|<1
	$$
	and
	\begin{equation}\label{eq:g}
	\frac{mt}{b}<\frac{1}{2}\left[(m-n)|t|^\frac{m}{m-n}-m|t|-n\right].
	\end{equation}
If $h_2(t)=\frac{1}{2}\left[(m-n)|t|^\frac{m}{m-n}-m|t|-n\right]$ for all $t$ with $|t|\leq 1$, then $h_2$ attains it absolute maximum at $t=0$. Indeed, we just need to notice that $h_2$ is even and that $h_2'(t)=\frac{m}{2}\left[ t^\frac{n}{m-n}-1\right]<0$ for all $t\in[0,1)$. Then $h_2(t)\leq h_2(0)=-\frac{n}{2}<0$ for all $t$ with $|t|\leq 1$. In particular, from \eqref{eq:g} it follows that $\frac{mt}{b}<0$, and since $b>0$, then $t\in(-1,0)$ Consequently, \eqref{eq:g} would be equivalent to
	$$
	b<\frac{2mt}{(m-n)|t|^\frac{m}{m-n}-m|t|-n}=g(t).
	$$
If $b=0$ then $t=0$ and $g(0)=0$, proving that $(b,c)\in {\mathbb B}_{m,n}$ with $b\ge 0$ is equivalent to 
	$$
	c\ne 0,\quad-1<t\leq 0
	$$
and 
	$$
	b\leq g(t).
	$$
This concludes the proof of $(2)$.
\end{proof}

In the following, on many occasions it will be useful to replace the variable $c$ by the variable $t$ with $t=\frac{nb}{mc}$. Doing so it is straightforward that $(b,c)\in {\mathbb A}_{m,n}$ (respectively $(b,c)\in {\mathbb B}_{m,n}$) if and only if $(b,t)\in A_{m,n}$ (respectively $(b,t)\in B_{m,n}$) where $A_{m,n}=A^1_{m,n}\cup A^2_{m,n}$ and $B_{m,n}={ B}^1_{m,n}\cup B^2_{m,n}$ with $A^2_{m,n}=-A^1_{m,n}$, $B^2_{m,n}=-B^1_{m,n}$ and
\begin{align*}
	A^1_{m,n}&=\left\{(b,t):\ 0\leq b< \frac{m}{m-n}\text{ and } f(b)\le t\leq 0\right\},\\
	B^1_{m,n}&=\left\{(b,t):\ -1< t\leq 0\text{ and } 0\leq b \leq g\left(t\right)\right\}.
\end{align*}
It is a good moment to take a look at the representation of $A^1_{m,n}$ and $B^1_{m,n}$ in Figure~\ref{fig:AB_intersection}.

\begin{lemma}\label{lem:Lambda}
	Let $m$ and $n$ be positive integers such that $m$ is even, $n$ is odd and $m\ge 2n$. If $K_{m,n}=\frac{n}{m-n}\left(\frac{m-n}{m}\right)^\frac{m}{n}$, the equation 
	\begin{equation}\label{eq:equality1}
	mK_{m,n}tb^\frac{m}{n}-nb-mt+(m-n)bt^\frac{m}{m-n}=0,
	\end{equation}
	defines an strictly decreasing function $t=\Lambda_{m,n}(b)$ defined for $b\in \left[0, \frac{m}{m-n}\right]$ such that $\Lambda_{m,n}(0)=0$. We define $\tau_0=\Lambda_{m,n}\left(\frac{m}{m-n}\right)$.
\end{lemma}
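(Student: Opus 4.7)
The plan is to realize $\Lambda_{m,n}$ as $-\Psi$ for a function $\Psi$ extracted from \eqref{eq:equality1} via the implicit function theorem after a sign change in $t$, and to prove strict monotonicity by exploiting the defining equation itself to collapse the partial derivative with respect to $b$ into a sum of manifestly negative terms.

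First I would reformulate. Since $m$ is even and $m-n$ is odd, $t^{m/(m-n)} = |t|^{m/(m-n)}$ for every real $t$. Evaluating \eqref{eq:equality1} at $b = 0$ gives $-mt = 0$, so the branch through the origin is pinned at $\Lambda_{m,n}(0) = 0$; a routine implicit function theorem computation at $(0,0)$ shows this branch leaves with slope $-n/m$, hence enters the region $t \le 0$. Setting $s = -t$, equation \eqref{eq:equality1} becomes
\[
H(b,s) := ms\bigl[1 - K_{m,n}b^{m/n}\bigr] + (m-n)\,b\,s^{m/(m-n)} - nb = 0.
\]
Using the identity $K_{m,n}\bigl(m/(m-n)\bigr)^{m/n} = n/(m-n)$ together with the hypothesis $m \ge 2n$, the bracket $1 - K_{m,n}b^{m/n}$ stays nonnegative on $[0, m/(m-n)]$; consequently $H_s > 0$ whenever $b > 0$. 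Combined with $H(b,0) = -nb \le 0$ and $H(b, s) \to +\infty$ as $s \to +\infty$, this produces a unique $\Psi(b) \ge 0$ with $H(b, \Psi(b)) = 0$ for every $b \in [0, m/(m-n)]$, and clearly $\Psi(0) = 0$.

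Next, the implicit function theorem would promote $\Psi$ to $C^1$ on $(0, m/(m-n))$ with $\Psi'(b) = -H_b/H_s$, so strict monotonicity reduces to verifying $H_b < 0$ along the curve. The hard part is here: a direct computation yields
\[
H_b = -msK_{m,n}\tfrac{m}{n}\,b^{m/n-1} + (m-n)\,s^{m/(m-n)} - n,
\]
whose three summands carry conflicting signs and admit no obvious estimate. The key trick I would use is to eliminate the middle term via $H(b,\Psi(b)) = 0$: after dividing by $b > 0$ one obtains $(m-n)\,s^{m/(m-n)} = n - (ms/b)\bigl(1 - K_{m,n}b^{m/n}\bigr)$, so the $-n$ cancels and the remaining terms collapse into the sum of the two strictly negative quantities $-msK_{m,n}\tfrac{m-n}{n}b^{m/n-1}$ and $-ms/b$. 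Hence $\Psi' > 0$, so $\Lambda_{m,n} := -\Psi$ is strictly decreasing on $[0, m/(m-n)]$, satisfies $\Lambda_{m,n}(0) = 0$, and $\tau_0 := \Lambda_{m,n}(m/(m-n))$ is well defined.
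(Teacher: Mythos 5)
Your argument is correct, and it reaches the conclusion by a route that differs from the paper's in both of its main steps. For existence and uniqueness of the branch through the origin, the paper sets up the implicit derivative $t'=F(b,t)$ and invokes Picard--Lindel\"of on the initial value problem $t(0)=0$ in the open region ${\mathcal U}$, extending the solution to the boundary; you instead pass to $s=-t$ (legitimately, since $m$ even and $m-n$ odd give $t^{m/(m-n)}=|t|^{m/(m-n)}$), observe that $H(b,\cdot)$ is strictly increasing on $[0,\infty)$ for $b>0$ because $1-K_{m,n}b^{m/n}\ge 0$ on $\left[0,\frac{m}{m-n}\right]$ (this is exactly where $m\ge 2n$ enters, via $K_{m,n}\left(\frac{m}{m-n}\right)^{m/n}=\frac{n}{m-n}$), and get the unique nonnegative root $\Psi(b)$ by the intermediate value theorem, with no ODE theory and with the domain $\left[0,\frac{m}{m-n}\right]$ pinned down explicitly rather than through the paper's terse remark that the domain of $\Lambda_{m,n}$ contains $(-L_{m,n},L_{m,n})$. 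For monotonicity, the paper divides \eqref{eq:equality1} by $bt$ and differentiates again to obtain an expression for $t'$ that is manifestly negative; your substitution of the constraint $H(b,\Psi(b))=0$ into $H_b$, which collapses it to $-msK_{m,n}\frac{m-n}{n}b^{\frac{m}{n}-1}-\frac{ms}{b}<0$, is the same idea of exploiting the defining equation but implemented more transparently through the implicit function theorem, and I have checked the algebra: it is exact. Two cosmetic points: your statement that $H_s>0$ whenever $b>0$ fails at the single point $s=0$, $b=\frac{m}{m-n}$ when $m=2n$, but this is harmless since along the curve $s=\Psi(b)>0$ for $b>0$; and strict monotonicity on the closed interval should be finished off by noting continuity of $\Psi$ at the endpoints, which is immediate from your construction. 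Neither affects the validity of the proof.
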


\begin{proof}
	If $\Lambda_{m,n}$ exists then it is straightforward that $\Lambda_{m,n}(0)=0$. Now, differentiating \eqref{eq:equality1} with respect to $b$ we arrive at
	$$
	t'=F(b,t)=\frac{n^2-m^2K_{m,n}tb^\frac{m-n}{n}-n(m-n)t^\frac{m}{m-n}}{mn\left[K_{m,n}b^\frac{m}{n}+bt^\frac{n}{m-n}-1\right]}.
	$$
	Hence, if $\Lambda_{m,n}$ exists, it must be a solution to the initial value problem
	\begin{equation}\label{eq:IVP}
	\begin{cases}
	&t'=F(b,t),\\
	&t(0)=0.
	\end{cases}
	\end{equation}
	Now consider the open subset of ${\mathbb R}^2$ given by
	$$
	{\mathcal U}=\{(b,t)\in{\mathbb R}^2:K_{m,n}b^\frac{m}{n}+bt^\frac{n}{m-n}-1<0\}.
	$$
	The set ${\mathcal U}$ has been represented in Figure~\ref{fig:U}. Observe that $(0,0)\in{\mathcal U}$. Then, since $\frac{\partial F}{\partial b}$ and $\frac{\partial F}{\partial t}$ are both continuous in ${\mathcal U}$, the initial value problem \eqref{eq:IVP} has a unique solution that can be extended to $\partial {\mathcal U}$.
	\begin{figure}
		\centering
		\includegraphics[height=.65\textwidth,keepaspectratio=true]{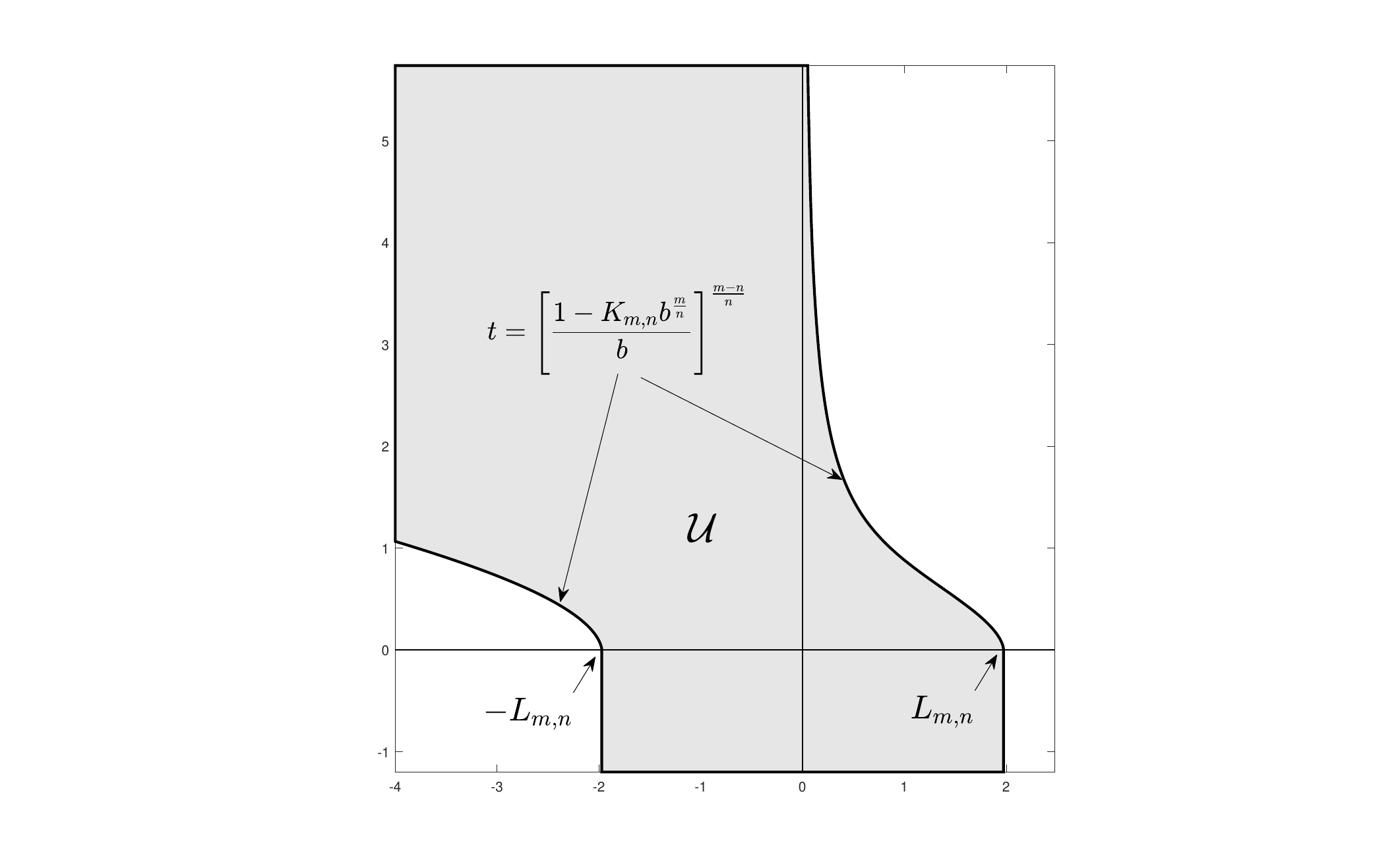}
		\caption{Representation of ${\mathcal U}$. Here we have considered the case $m=12$ and $n=5$. }\label{fig:U}
	\end{figure}
	On the other hand, dividing \eqref{eq:equality1} by $bt$ (assuming $bt\ne 0$),
	$$
	mK_{m,n}b^\frac{m-n}{n}-\frac{n}{t}-\frac{m}{b}+(m-n)t^\frac{n}{m-n}=0.
	$$
	Differentiating again with respect to $b$,
	$$
	t'=\frac{-mn-m(m-n)K_{m,n}b^\frac{m}{n}}{n^2t^\frac{m}{m-n}+n^2}\frac{t^2}{b^2},
	$$
	which is clearly negative for every plausible value of $b$ and $t$. Hence $\Lambda_{m,n}$ is strictly decreasing and its domain contains $(-L_{m,n},L_{m,n})$ and therefore $\left[-\frac{m}{m-n},\frac{m}{m-n}\right]$ too.
\end{proof}

\begin{remark}
The value of $\tau_0$ for every plausible choice of $m$ and $n$ is the unique solution between $-1$ and $0$ of the equation
	\begin{equation}\label{eq:tau_0}
	(m-n)t^\frac{m}{m-n}+(2n-m)t-n=0.
	\end{equation}
The equation \eqref{eq:tau_0} cannot generally be solved explicitly. However, using specialized software $\tau_0$ can be approximated with precision. We provide below a table with some approximations to $\tau_0$ obtained with \textit{Mathematica}: 
\begin{center}%
	\begin{tabular}[c]{|c|c|c|}%
		\hline
		$m$ & $n$ & $\tau_0$\\
		\hline
		$4$ & $1$ & $-0.2560771804$ \\
		
		$6$ & $1$ & $-0.1359670417$ \\
		
		$8$ & $1$ & $-0.0911451357$ \\
		
		$8$ & $3$ & $-0.5472162244$ \\
		
		$10$ & $1$ & $-0.0681314528$ \\
		
		$10$ & $3$ & $-0.3536273979$ \\
		
		$12$ & $1$ & $-0.0542309739$ \\
		
		$12$ & $3$ & $-0.2560771804$ \\
		
		$12$ & $5$ & $-0.6823509843$\\
		\hline
	\end{tabular}	
\end{center}

For $m=4$ and $n=1$ (or $\frac{m}{n} = 4$ in any case) it can be seen that $\tau_0$ is exactly equal to 
	$$
	\tau_0 = -\frac{1}{81} \left(\sqrt[3]{729 \sqrt{17}+541}-\frac{206}{\sqrt[3]{729 \sqrt{17}+541}}+19\right).
	$$
	To give the reader an idea of how complex these calculations can get, for low values of $m$ and $n$, $\tau_0$ is a real root of a polynomial. For instance, if $m=8$ and $n=3$, $\tau_0$ is the only real root of the $7$ degree polynomial given by
	$$
	3125 t^7+3125 t^6+3125 t^5+3093 t^4+2853 t^3+2133 t^2+1053 t+243.
	$$
\end{remark}

\begin{theorem}
	Let $m$ and $n$ be positive integers such that $m$ is even, $n$ is odd and $m>n$. Let us define $K_{m,n}=\frac{n}{m-n}\left(\frac{m-n}{m}\right)^\frac{m}{n}$. If $m\geq 2n$ then
	\begin{equation}\label{eq:norm}
	\vertiii{(a,b,c)}_{m,n}=\begin{cases}
	\left|K_{m,n}a\left|\frac{b}{a}\right|^\frac{m}{n}-c\right|&\text{if $abc\ne 0$ and $\left(\frac{b}{a},\frac{nb}{mc}\right)\in {\mathcal A}_{m,n}$},\\
	\left|K_{m,m-n}c\left|\frac{b}{c}\right|^\frac{m}{m-n}-a\right|&\text{if $abc\ne 0$ and $\left(\frac{b}{a},\frac{nb}{mc}\right)\in {\mathcal B}_{m,n}$},\\
	\max\{|a|,|c|\}&\text{if $b=0$ and $ac\leq 0$},\\
	|a+c|+|b|&\text{otherwise},
	\end{cases}
	\end{equation}
	where ${\mathcal A}_{m,n}={\mathcal A}^1_{m,n}\cup {\mathcal A}^2_{m,n}$, ${\mathcal B}_{m,n}={\mathcal B}^1_{m,n}\cup {\mathcal B}^2_{m,n}$, 
	\begin{align*}
	{\mathcal A}^1_{m,n}&=\{(b,t)\in{\mathbb R}^2:b\in(0,\frac{m}{m-n}]\text{ and } \Lambda_{m,n}(b)\leq t< 0\},\\
	{\mathcal B}^1_{m,n}&=\{(b,t)\in{\mathbb R}^2:b\in(0,\frac{m}{m-n}]\text{ and } \tau_0\leq t\leq \Lambda_{m,n}(b)\}\\
	&\quad\quad\cup \{(b,t)\in{\mathbb R}^2:t\in[-1,\tau_0]\text{ and } 0< b\leq g(t)\}.
	\end{align*}
	and ${\mathcal A}^2_{m,n}=-{\mathcal A}^1_{m,n}$, ${\mathcal B}^2_{m,n}=-{\mathcal B}^1_{m,n}$. 
	The regions ${\mathcal A}^1_{m,n}$ and ${\mathcal B}^1_{m,n}$ have been represented in Figure~\ref{fig:AB}. On the other hand,
	if $m<2n$ then, according to \eqref{eq:reduction}, $\vertiii{(a,b,c)}_{m,n}=\vertiii{(c,b,a)}_{m,m-n}$.
\end{theorem}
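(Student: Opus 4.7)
The plan is to exploit the identity \eqref{eq:relation},
\[
\vertiii{(a,b,c)}_{m,n}=\max\{\|(a,b,c)\|_{m,m-n},\,\|(c,b,a)\|_{m,n}\},
\]
in combination with Theorem~\ref{evenEvenTheorem}. Since $m$ is even while $n$ and $m-n$ are both odd, the theorem applies to $\|(c,b,a)\|_{m,n}$ directly and, with the roles of $n$ and $m-n$ interchanged, to $\|(a,b,c)\|_{m,m-n}$. Each of these two norms is therefore either a \emph{polynomial candidate} of the form $|K_{m,n}a|b/a|^{m/n}-c|$ or $|K_{m,m-n}c|b/c|^{m/(m-n)}-a|$, or the \emph{symmetric candidate} $|a+c|+|b|$, and the goal is to determine which of the (at most) two resulting values wins the maximum in each region of $\mathbb{R}^3\setminus\{0\}$.

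By homogeneity one may normalize $a=1$ (the $a=0$ case is absorbed into the "otherwise" clause via \eqref{eq:reduction}). Under this normalization, the polynomial candidate for $\|(a,b,c)\|_{m,m-n}$ is active precisely when $(b,c)\in\mathbb{A}_{m,n}$, while the polynomial candidate for $\|(c,b,a)\|_{m,n}$ is active precisely when $(b,c)\in\mathbb{B}_{m,n}$; both regions were described in Lemma~\ref{lem:AmnBmn}. Passing to the new coordinate $t=\tfrac{nb}{mc}$ (as in the discussion following Lemma~\ref{lem:AmnBmn}) transports $\mathbb{A}_{m,n},\mathbb{B}_{m,n}$ to $A_{m,n},B_{m,n}$, and outside $A_{m,n}\cup B_{m,n}$ both factors of the maximum collapse to $|a+c|+|b|$ by Theorem~\ref{evenEvenTheorem}, which yields the "otherwise" clause after undoing the normalization.

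The decisive step is the analysis on the overlap $A_{m,n}\cap B_{m,n}$, where both polynomial candidates compete. Equating them and clearing denominators (using the parities of $m$ and $m-n$ to pass freely between powers and absolute values on the relevant branch) collapses the equality to precisely equation \eqref{eq:equality1}. By Lemma~\ref{lem:Lambda} this equation is solved by a single strictly decreasing curve $t=\Lambda_{m,n}(b)$ through the origin, so $A_{m,n}\cap B_{m,n}$ splits into exactly two pieces. A single sign check at a convenient point in each piece identifies which candidate dominates there, and the resulting partition is precisely the pair of sets $\mathcal{A}^1_{m,n}$ and $\mathcal{B}^1_{m,n}$ of the statement. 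The "minus" copies $\mathcal{A}^2_{m,n}$ and $\mathcal{B}^2_{m,n}$ arise from the sign symmetries $\vertiii{(a,b,c)}_{m,n}=\vertiii{(a,-b,c)}_{m,n}=\vertiii{(-a,b,-c)}_{m,n}$, which follow immediately from the substitutions $x\mapsto -x$ and $y\mapsto -y$ together with the parities of $m$ and $n$.

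The degenerate clause $b=0$, $ac\le 0$, is settled by a direct computation: on $[-1,1]^2$ the monomials $ax^m$ and $cy^m$ take values of opposite sign and can be driven to their individual extrema independently, giving $\sup_{[-1,1]^2}|ax^m+cy^m|=\max\{|a|,|c|\}$. I expect the main obstacle to be the third paragraph: showing that the single implicit curve $\Lambda_{m,n}$ really partitions $A_{m,n}\cap B_{m,n}$ into exactly two regions, on each of which one fixed candidate dominates. This requires both the strict monotonicity from Lemma~\ref{lem:Lambda} (so that the curve does not loop back on itself or re-enter the overlap) and a careful verification, using the explicit boundary functions $f$ and $g$ from Lemma~\ref{lem:AmnBmn}, that $\Lambda_{m,n}$ meets $\partial(A_{m,n}\cap B_{m,n})$ only at its corners.
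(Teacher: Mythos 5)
Your proposal is correct and follows essentially the same route as the paper: the identity \eqref{eq:relation} combined with Theorem~\ref{evenEvenTheorem}, the translation of the regions ${\mathcal I}_{m,m-n}$ and ${\mathcal I}_{m,n}$ via Lemma~\ref{lem:AmnBmn} into the coordinates $(b,t)$ with $t=\frac{nb}{mc}$, and the comparison of the two polynomial candidates along the curve $t=\Lambda_{m,n}(b)$ from Lemma~\ref{lem:Lambda}, together with the symmetry and degenerate cases handled exactly as you describe. The verification you single out as the main obstacle is precisely the computation the paper performs, namely that $\Lambda_{m,n}$ meets $b=g(t)$ only at the corner $\left(\tfrac{m}{m-n},\tau_0\right)$ (using \eqref{tau_x} and the uniqueness of $\tau_0$ in $(-1,0)$), so that the curve splits $A^1_{m,n}\cup B^1_{m,n}$ exactly into ${\mathcal A}^1_{m,n}$ and ${\mathcal B}^1_{m,n}$.
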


\begin{proof}
	It is elementary to prove that
	\begin{align*}
	\vertiii{(0,b,c)}_{m,n}&=|b|+|c|,\\
	\vertiii{(a,b,0)}_{m,n}&=|a|+|b|,
	\end{align*}
	for all $a,b,c\in{\mathbb R}$. Also,
	\begin{align}
	\vertiii{(a,0,c)}_{m,n}&=\max\{|ax^m+cy^m|:(x,y)\in[-1,1]^2\}\nonumber\\
	&=\begin{cases}
	\max\{|a|,|c|\} & \text{if $ac\leq 0$},\\
	|a+c| & \text{if $ac> 0$}.\label{align:bequalszero}\\
	\end{cases}
	\end{align}
	For the rest of the proof we will assume that $a,b,c\ne 0$. In particular we have
	$$
	\vertiii{(a,b,c)}_{m,n}=|a|\cdot\vertiii{\left(1,\frac{b}{a},\frac{c}{a} \right)}_{m,n}
	$$
	and therefore it will be enough to obtain a formula to calculate $\vertiii{(1,b,c)}_{m,n}$ for all $b,c\in{\mathbb R}$ with $b,c\ne 0$.
	
	According to \eqref{eq:relation} we have
	$$
	\vertiii{(1,b,c)}_{m,n}=\max\{\|(1,b,c)\|_{m,m-n},\|(c,b,1)\|_{m,n}\}.
	$$
	Now using \eqref{formulaEvenEven} we obtain	
	\begin{align*}
	\|(1,b,c)\|_{m,m-n}&=
	\begin{cases}
	\left|K_{m,n}|b|^\frac{m}{n}-c\right|&\text{if $(1,b,c)\in{\mathcal I}_{m,m-n}$,}\\
	|1+c|+|b|&\text{otherwise,}
	\end{cases}
	\\
	\|(c,b,1)\|_{m,n} &=
	\begin{cases}
	\left|K_{m,m-n}c\left|\frac{b}{c}\right|^\frac{m}{m-n}-1\right|&\text{if $(c,b,1)\in{\mathcal I}_{m,n}$,}\\
	|1+c|+|b|&\text{otherwise.}
	\end{cases}	
	\end{align*}
	Since $\|(1,b,c)\|_{m,m-n}=\|(1,-b,c)\|_{m,m-n}$ and $\|(c,b,1)\|_{m,n}=\|(c,-b,1)\|_{m,n}$ we can assume that $b> 0$. From Lemma~\ref{lem:AmnBmn} we have:
	\begin{itemize}
		\item $(1,b,c)\in{\mathcal I}_{m,m-n}$ is equivalent to $(b,t)\in A_{m,n}$.
		
		\item $(c,b,1)\in{\mathcal I}_{m,m}$ is equivalent to $(b,t)\in B_{m,n}$.
	\end{itemize}
As a matter of fact, since $b> 0$ in fact we have
	\begin{itemize}
	\item $(1,b,c)\in{\mathcal I}_{m,m-n}$ is equivalent to $(b,t)\in A_{m,n}^1$.
	
	\item $(c,b,1)\in{\mathcal I}_{m,m}$ is equivalent to $(b,t)\in B_{m,n}^1$.
\end{itemize}
Therefore, if $b> 0$ we have that
	$$
	\vertiii{(1,b,c)}_{m,n}=\begin{cases}
	\left|K_{m,n}|b|^\frac{m}{n}-c\right|&\text{if $(b,t)\in A^1_{m,n}\setminus B^1_{m,n}$,}\\
	\left|K_{m,m-n}c\left|\frac{b}{c}\right|^\frac{m}{m-n}-1\right|&\text{if $(b,t)\in B^1_{m,n}\setminus A^1_{m,n}$},\\
	|1+c|+|b|&\text{if $(b,t)\notin A^1_{m,n}\cup B_{m,n}^1$},
	\end{cases}
	$$	 
and
	$$
	\vertiii{(1,b,c)}_{m,n}=\max\left\{\left|K_{m,n}|b|^\frac{m}{n}-c\right|,\left|K_{m,m-n}c\left|\frac{b}{c}\right|^\frac{m}{m-n}-1\right|\right\}
	$$
whenever $(b,t)\in A^1_{m,n}\cap B_{m,n}^1$. To finish the proof we just need to compare $\left|K_{m,n}|b|^\frac{m}{n}-c\right|$ and $\left|K_{m,m-n}c\left|\frac{b}{c}\right|^\frac{m}{m-n}-1\right|$ within $A^1_{m,n}\cap B_{m,n}^1$.

It is left to the reader to check that the inequality
	$$
	\left|K_{m,n}|b|^\frac{m}{n}-c\right|\ge\left|K_{m,m-n}c\left|\frac{b}{c}\right|^\frac{m}{m-n}-1\right|
	$$
can be written using the variables $b$ and $t$ as
	\begin{equation}\label{eq_FG_bt}
	\left|K_{m,n}b^\frac{m}{n}-\frac{nb}{mt}\right|\ge\left|\frac{m-n}{m}bt^\frac{n}{m-n}-1\right|.
	\end{equation}
Now, let $F(b,t)$ and $G(b,t)$ be, respectively, the right and the left hand side of~\eqref{eq_FG_bt}. 
Notice that
	$$
	F(b,t)=1-\frac{m-n}{m}bt^\frac{n}{m-n}
	$$
for every $(b,t)\in A^1_{m,n}$ and
	$$
	G(b,t)=K_{m,n}b^\frac{m}{n}-\frac{nb}{mt}
	$$
for all $(b,t)\in B^1_{m,n}$ with $t\ne 0$. The equality 
	$$
	G(t,b)=F(t,b)
	$$
is equivalent to
	$$
	mK_{m,n}tb^\frac{m}{n}-nb-mt+(m-n)bt^\frac{m}{m-n}=0
	$$
which is the identity \eqref{eq:equality1} studied in Lemma~\ref{lem:Lambda}. Hence $F$ and $G$ coincide only along the curve $t=\Lambda_{m,n}(b)$. Interestingly, $t=\Lambda_{m,n}(b)$ and $b=g(t)$ meet at the point $\left(\frac{m}{m-n},\tau_0\right)$. Indeed, from Lemma \ref{lem:Lambda}, keeping in mind that $K_{m,n}=\frac{n}{m-n}\left(\frac{m-n}{m}\right)^\frac{m}{n}$, and making
$$t = \tau_0 \text{ and } b = \frac{m}{m-n},$$
which entails that $\Lambda_{m,n}(b) = \tau_0$, we have the identity
\begin{align*}
0 & = \frac{m n}{m-n}\left(\frac{m-n}{m}\right)^\frac{m}{n} \tau_0 \left( \frac{m}{m-n} \right)^{\frac{m}{n}} - \frac{m n}{m-n} - m \tau_0 + \frac{(m-n) m}{m-n} \tau_0^{\frac{m}{m-n} }.	
\end{align*}
After some straightforward calculations we arrive at
\begin{equation}\label{tau_x}
(m-n) \tau_0^{ \frac{m}{m-n} }	= n + (m-n) \tau_0 - n \tau_0.
\end{equation}
Next, let us see that $$g(\tau_0) = \frac{m}{m-n}.$$ 
Indeed, using equation \eqref{tau_x}, we have
\begin{align*}
g(\tau_0) & = \frac{2 m \tau_0}{(m-n) \tau_0^{ \frac{m}{m-n} }	+ m \tau_0 - n} \\
& = \frac{2 m \tau_0}{n + (m-n) \tau_0 - n \tau_0	+ m \tau_0 - n} \\
& = \frac{m}{m-n}.
\end{align*}
Also, let us notice that equation \eqref{tau_x} has only one solution in the interval $(-1,0)$. Indeed, using a convexity argument, it is clear that the functions $(m-n) t^{\frac{m}{m-n}}$ and $n + (m-n) t - n t$ meet, at most, twice. Since $t=1$ makes $(m-n) t^{\frac{m}{m-n}} = n + (m-n) t - n t$ and we know that there is one solution belonging to $(-1,0)$, the above $\tau_0$ in unique.

We conclude that $F(t,b)\ge G(t,b)$ if $(b,t)\in A^1_{m,n}\cup B_{m,n}^1$ and $(b,t)$ is below the curve $t=\Lambda_{m,n}(b)$ or in the curve, or equivalently in ${\mathcal B}^1_{m,n}$. On the contrary $F(t,b)\leq G(t,b)$ in the rest of $A^1_{m,n}\cup B_{m,n}^1$ or along the curve $t=\Lambda_{m,n}(b)$, that is, in ${\mathcal A}^1_{m,n}$. Then
	$$
	\vertiii{(1,b,c)}_{m,n}=\begin{cases}
	\left|K_{m,n}|b|^\frac{m}{n}-c\right|&\text{if $(b,t)\in {\mathcal A}^1_{m,n}$,}\\
	\left|K_{m,m-n}c\left|\frac{b}{c}\right|^\frac{m}{m-n}-1\right|&\text{if $(b,t)\in {\mathcal B}^1_{m,n}$},\\
	|1+c|+|b|&\text{if $(b,t)\notin {\mathcal A}^1_{m,n}\cup {\mathcal B}_{m,n}^1$}.
	\end{cases}
	$$	
We arrive at the desired result by combining the previous formula with
	$$
	\vertiii{(a,b,c)}_{m,n}=|a|\cdot\vertiii{\left(1,\frac{b}{a},\frac{c}{a} \right)}_{m,n}.
	$$ 
\end{proof}

\begin{figure}
	\centering
	\includegraphics[height=.5\textwidth,keepaspectratio=true]{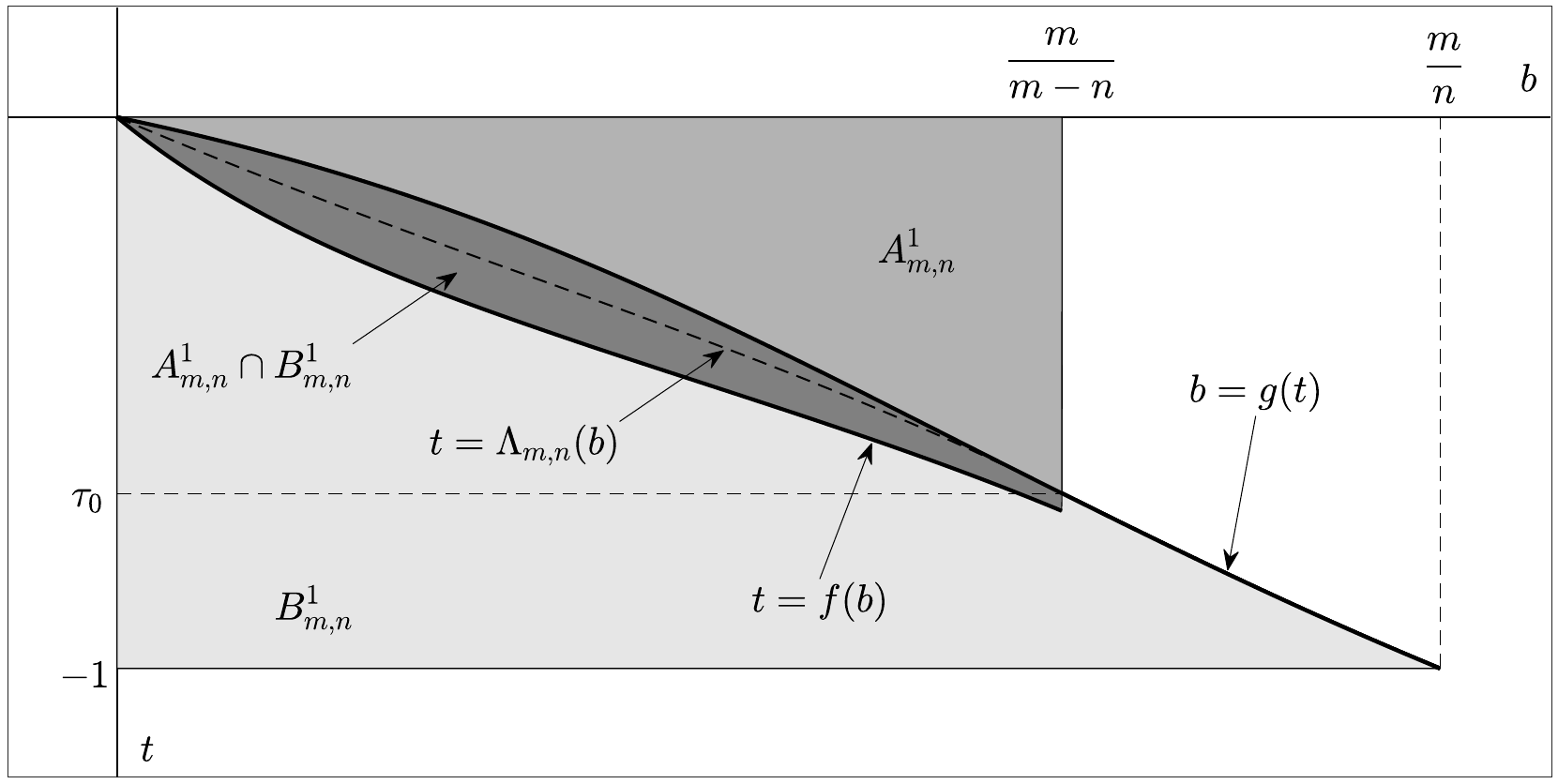}
	\caption{Representation of $A_{m,n}^1$ and $B_{m,n}^1$. The intersection $A_{m,n}^1\cap B_{m,n}^1$ is the darkest region. We have chosen the values $m=12$ and $n=5$. }\label{fig:AB_intersection}
\end{figure}

\begin{figure}
	\centering
	\includegraphics[height=.5\textwidth,keepaspectratio=true]{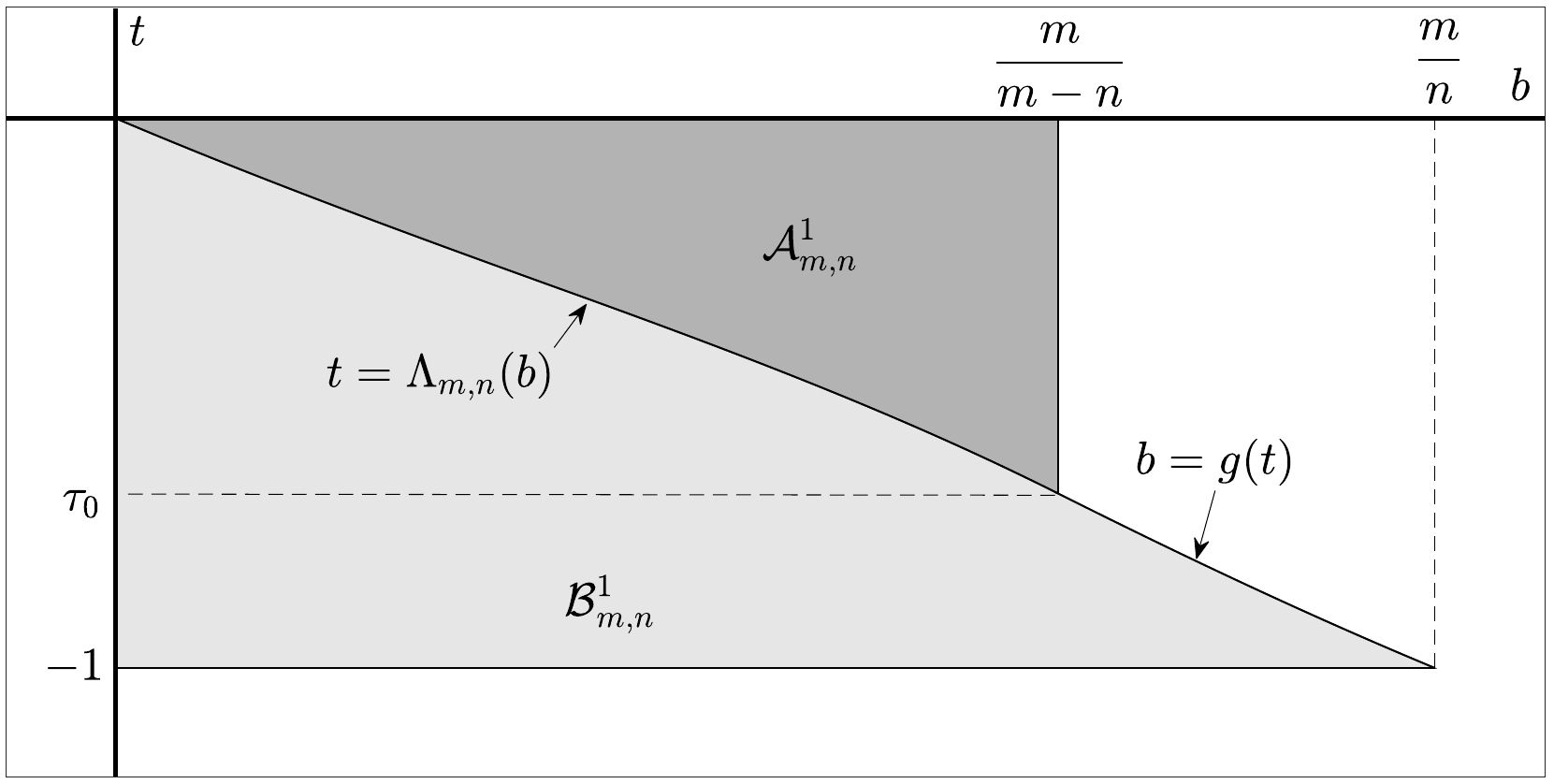}
	\caption{Representation of ${\mathcal A}_{m,n}^1$ and ${\mathcal B}_{m,n}^1$. Here we have considered the case $m=12$ and $n=5$. }\label{fig:AB}
\end{figure}

\section{A parametrization of the unit sphere ${\mathsf S}^h_{m,n}$}\label{sec:projection}

\begin{figure}
	\centering
	\includegraphics[height=.75\textwidth,keepaspectratio=true]{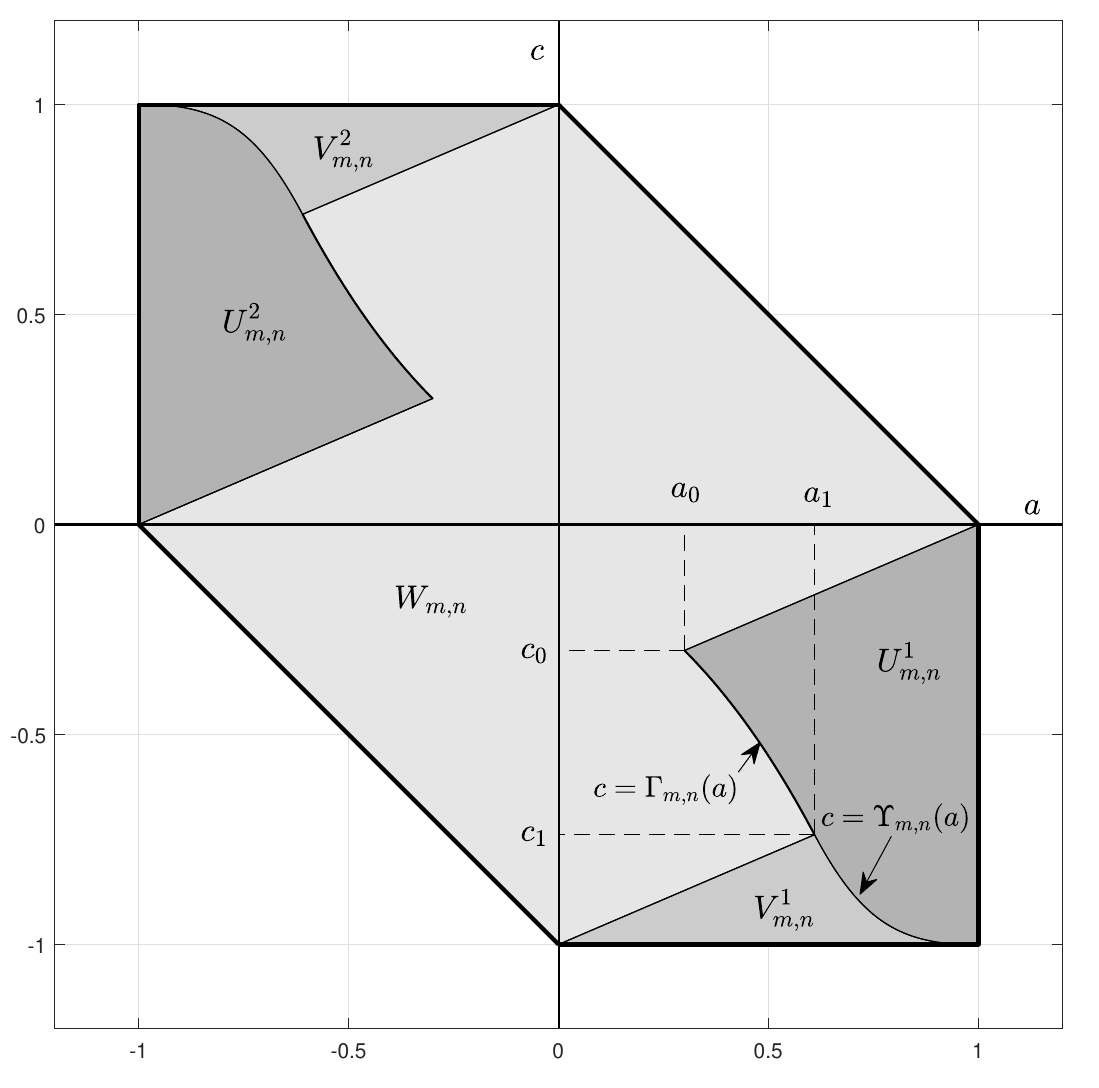}
	\caption{\textcolor{black}{Representation of $\pi_{a,c}({\mathsf S}_{m,n}^h)$.} Here we have considered the case $m=10$ and $n=3$. }\label{fig:Proj_10_3}
\end{figure}

In order to obtain a parametrization of ${\mathsf S}^h_{m,n}$ it will be necessary to know the projection of ${\mathsf S}^h_{m,n}$ over a plane. The most convenient plane is $b=0$ (also called the $ac$ plane) since the unit ball is symmetric with respect to that plane. The latter is justified with the obvious equation $\vertiii{(a,b,c)}_{m,n}=\vertiii{(a,-b,c)}_{m,n}$ whenever $m$ is even and $n$ odd.

In the main result of this section some notations and definitions will be needed. First we consider two curves that will play an important role, namely $c=\Gamma_{m,n}(a)$ and $c=\Upsilon_{m,n}(a)$. The first curve is defined implicitly using the following lemma.

\begin{lemma}\label{lem:lambda_0_Jmn}
	Let $m,n$ be positive integers such that $m\ge 2n$, $m$ is even and $n$ is odd. \textcolor{black}{Define the numbers $J_{m,n}=\frac{m}{n}\left(\frac{n}{m-n}\right)^\frac{m-n}{m}$ and $\lambda_0=\frac{n}{m-n}$} and consider the equation
	\begin{equation}\label{eq:implicit_curve}
	J_{m,n}(1-a)^\frac{m-n}{m}|c|^\frac{n}{m}-1-a-c=0.
	\end{equation}
	Then in \eqref{eq:implicit_curve} $c$ can be expressed as a function of $a$ with $a \in (a_0, a_1)$, namely $\Gamma_{m,n}(a)$, where the curve $c=\Gamma_{m,n}(a)$ meets the parallel lines $c=\lambda_0(a-1)$ and \textcolor{black}{$c=\lambda_0 a -1$} at the points $(a_0,c_0)$ and $(a_1,c_1)$ respectively (see Figure~\ref{fig:Proj_10_3}). Moreover, $\Gamma_{m,n}(a)$ is strictly decreasing and $\Gamma_{m,n}(a)<0$ for $a\in[a_0,a_1]$. It turns out that 
	$$
	a_0=\frac{n}{m}\quad\text{and}\quad c_0=-\frac{n}{m}
	$$
	whereas $a_1$ and $c_1$ cannot generally be obtained explicitly. 
\end{lemma}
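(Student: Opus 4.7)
The plan is to treat equation (\ref{eq:implicit_curve}) as implicitly defining $c$ as a function of $a$ on the half-plane $\{c<0\}$, where $|c|^{n/m}=(-c)^{n/m}$ is smooth, and then apply the implicit function theorem. Setting $F(a,c):=J_{m,n}(1-a)^{(m-n)/m}|c|^{n/m}-1-a-c$, I would first compute the two partial derivatives
\[
\frac{\partial F}{\partial c}=-\frac{n}{m}J_{m,n}(1-a)^{(m-n)/m}(-c)^{(n-m)/m}-1,\qquad\frac{\partial F}{\partial a}=-\frac{m-n}{m}J_{m,n}(1-a)^{-n/m}(-c)^{n/m}-1,
\]
both strictly negative on $\{a<1,\,c<0\}$. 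The first inequality lets the implicit function theorem produce a smooth branch $c=\Gamma_{m,n}(a)$, and both inequalities combined give $\Gamma_{m,n}'(a)=-(\partial F/\partial a)/(\partial F/\partial c)<0$, so $\Gamma_{m,n}$ is strictly decreasing wherever it is defined.

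Next I would locate $a_0$ by substituting $c=\lambda_0(a-1)=\tfrac{n(a-1)}{m-n}$ into (\ref{eq:implicit_curve}). Using the identity $J_{m,n}\bigl(\tfrac{n}{m-n}\bigr)^{n/m}=\tfrac{m}{m-n}$ (immediate from the definition of $J_{m,n}$), the left-hand side collapses to $\tfrac{m(1-a)}{m-n}$, while the right-hand side $1+a+\tfrac{n(a-1)}{m-n}$ simplifies to $\tfrac{m(1+a)-2n}{m-n}$. Equating and clearing denominators gives $m(1-a)=m(1+a)-2n$, so $a_0=n/m$ and $c_0=\lambda_0(a_0-1)=-n/m$. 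The strict decrease of $\Gamma_{m,n}$ then forces $\Gamma_{m,n}(a)<c_0<0$ for all $a>a_0$ in its domain. For $a_1$, substituting $c=\lambda_0 a-1$ into (\ref{eq:implicit_curve}) and performing the same style of simplification produces the transcendental equation
\[
(1-a)^{(m-n)/m}\bigl(\tfrac{1}{\lambda_0}-a\bigr)^{n/m}=a,
\]
whose left-hand side is strictly decreasing on $(0,\min\{1,1/\lambda_0\})$ (product of two strictly decreasing positive factors), strictly positive at $a=0$ and zero at the right endpoint, whereas the right-hand side is strictly increasing from $0$. The intermediate value theorem then yields a unique $a_1$ in this range, with no closed form in general.

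I expect the principal obstacle to be the qualitative claim that the component of $\{F=0\}$ connecting the two line intersections really lies in $\{c<0\}$ and is the graph of a single-valued $\Gamma_{m,n}$ over the full interval $[a_0,a_1]$. I would handle this by exploiting the strict monotonicity of $c\mapsto F(a,c)$ on $\{c<0\}$ established above, which gives at most one negative root for each fixed $a$; combined with the endpoint identities $F(a_0,c_0)=F(a_1,c_1)=0$ and a continuity argument tracking the sign of $F$ on vertical segments in the strip between the two parallel lines, this yields existence and uniqueness of $\Gamma_{m,n}(a)$ for each $a\in[a_0,a_1]$. The remaining bookkeeping (extension to the closed interval and the geometric picture of Figure \ref{fig:Proj_10_3}) is then routine.
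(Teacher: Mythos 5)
Your proposal is correct, and it takes a genuinely (if mildly) different route from the paper. The paper fixes $\overline a\in(a_0,a_1)$ and produces the value $\Gamma_{m,n}(\overline a)$ by the intermediate value theorem applied to $x\mapsto J_{m,n}(1-\overline a)^{(m-n)/m}|x|^{n/m}-1-\overline a-x$ on $(-1,0)$, with positivity at $x=-1$ extracted from the decreasing auxiliary function $\psi(z)=J_{m,n}(1-z)^{(m-n)/m}-z$ and the assumption that $(a_1,c_1)$ lies on the curve; it then obtains regularity and strict monotonicity by viewing the curve as the solution of the initial value problem $c'=G(a,c)<0$ and invoking Picard--Lindel\"of. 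You instead work with $F(a,c)$ on $\{c<0\}$, where $|c|^{n/m}$ is smooth, and use the implicit function theorem: your computation that both partials are strictly negative there is correct and simultaneously gives the local graph structure and $\Gamma_{m,n}'=-F_a/F_c<0$, avoiding the ODE machinery altogether. Your endpoint analysis is in fact more complete than the paper's: you derive $a_0=n/m$, $c_0=-n/m$ from the identity $J_{m,n}\lambda_0^{n/m}=\tfrac{m}{m-n}$ instead of merely verifying the point, and you prove existence and uniqueness of $a_1$ through the reduced equation $(1-a)^{(m-n)/m}\bigl(\tfrac{1}{\lambda_0}-a\bigr)^{n/m}=a$, a point the paper simply takes for granted. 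The final ``strip'' step you leave as a sketch does go through with the ingredients you already have: your two substitutions show $F>0$ on the line $c=\lambda_0a-1$ for $a\in(a_0,a_1)$ and $F<0$ on the line $c=\lambda_0(a-1)$ for $a>a_0$ (there the equation reduces to $\tfrac{2(n-ma)}{m-n}$), so $F$ changes sign on each vertical segment between the two lines and your monotonicity of $F$ in $c$ on $\{c<0\}$ gives exactly one negative root, which moreover lies strictly between the lines and hence is negative; this is essentially the paper's IVT step with the two lines replacing the levels $c=0$ and $c=-1$. The only cosmetic caveat is the degenerate case $m=2n$, where the two lines coincide and $a_0=a_1$, which neither you nor the paper needs to treat since the interval $(a_0,a_1)$ is then empty.
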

\begin{proof}
	One can check easily that $(a_0,c_0)$ with $a_0=\frac{n}{m}$ and $c_0=-\frac{n}{m}$ is a solution to \eqref{eq:implicit_curve} and that the line $c=\lambda_0(a-1)$ passes though $(a_0,c_0)$. 
	Notice that for each fixed $\overline{a} \in (a_0, a_1)$, the function 
	\[
	x \in (-1,0) \mapsto \varphi(x) := J_{m,n} (1 - \overline{a} )^{\frac{m-n}{m}} |x|^{\frac{n}{m}} - 1 - \overline{a} - x 
	\]
	is a strictly decreasing function. It is clear that $\phi (0) = -1 - \overline{a} <0$. Observe also that 
	\[
	z \in (0,1) \mapsto \psi (z):= J_{m,n} (1-z)^{\frac{m-n}{m}} - z
	\] 
	is strictly decreasing; so 
	\begin{align*}
	\phi(-1) = J_{m,n}(1-\overline{a})^{\frac{m-n}{m}} - \overline{a} = \psi (\overline{a}) > \psi (a_1) = \frac{1+ a_1 + c_1 } {|c_1|^{\frac{n}{m}} } - a_1 > 0,
	\end{align*} 
	where the last equality holds since the intersection point $(a_1, c_1)$ satisfies the equation \eqref{eq:implicit_curve}. By Intermediate Value Theorem, this shows that there exists a unique $\overline{c} \in (c_0, c_1)$ such that $\phi(\overline{c})=0$, i.e., $(\overline{a},\overline{c})$ satisfies the equation \eqref{eq:implicit_curve}. 
	
	Next, differentiating \eqref{eq:implicit_curve} with respect to $a$ we obtain
	$$
	c'=G(a,c)=-\frac{\frac{m-n}{m}J_{m,n}\left(\frac{|c|}{1-a}\right)^\frac{n}{m}+1}{J_{m,n}\left(\frac{1-a}{|c|}\right)^\frac{m-n}{m}+1}<0
	$$
	for all $(a,c)\in(a_0,a_1)\times (c_0,c_1)$. Then if $\Gamma_{m,n}$ exists, it must be the solution of the initial value problem for a given point $\overline{a} \in (a_0, a_1)$, 
	\begin{equation}\label{eq:IVP2}
	\begin{cases}
	c'(a)&=G(a,c),\\
	c(\overline{a})&= \overline{c}.
	\end{cases}
	\end{equation}
	Since $\frac{\partial G}{\partial a}$ and $\frac{\partial G}{\partial c}$ are both continuous in $(a_0,a_1)\times (c_0,c_1)$, the Picard-Lindelöf theorem shows that the initial value problem \eqref{eq:IVP2} has a unique solution (which is $C^1$) around some interval containing $\overline{a}$. By uniqueness, this local solution coincides with the one given from Intermediate Value Theorem locally. 
	 Since $\overline{a} \in (a_0,a_1)$ is given arbitrarily, we conclude that $\Gamma_{m,n}(a)$ exists and is well-defined for all $a \in (a_0 , a_1)$. Also $\Gamma_{m,n}$ is strictly decreasing and the curve $c=\Gamma_{m,n}(a)$ meets the straight line $c=\lambda_0a-1$ at the point $(a_1,c_1)$ with $\frac{n}{m}=a_0<a_1<1$ and $-\frac{n}{m}=c_0>c_1>-1$.
\end{proof}
\textcolor{black}{Also, we will consider the curve $c=\Upsilon_{m,n} (a)$ with $a\in[a_1,1]$ where
	$$
	\Upsilon_{m,n} (a)=-\frac{a^\frac{m-n}{n}}{(1-a)^\frac{m-n}{n}+a^\frac{m-n}{n}}=-\frac{1}{1+\left(\frac{1-a}{a}\right)^\frac{m-n}{n}}.
	$$
Interestingly, the curves $c=\Gamma_{m,n}(a)$, $c=\Upsilon_{m,n} (a)$ and the straight line $c=\lambda_0 a-1$ meet at the point $(a_1,c_1)$ (see Figure~\ref{fig:Proj_10_3}).} Also, for every pair $m,n$ of positive integers with $m\ge 2n$, $m$ even and $n$ odd, we will use the sets $\Pi$, $U_{m,n}$, $U^1_{m,n}$, $U^2_{m,n}$, $V_{m,n}$, $V^1_{m,n}$, $V^2_{m,n}$ and $W_{m,n}$ defined as
\begin{align*}
	\Pi&=\{(a,c)\in{\mathbb R}^2:(a,c)\in[-1,1]^2\text{ and }|a+c|\leq 1\},\\
	U_{m,n}&=U^1_{m,n}\cup U^2_{m,n},\\
	V_{m,n}&=V^1_{m,n}\cup V^2_{m,n},\\
\textcolor{black}{	W_{m,n}}&= \textcolor{black}{\Pi \setminus (U_{m,n}\cup V_{m,n})},
\end{align*}
where $U^2_{m,n}=-U^1_{m,n}$, $V^2_{m,n}=-V^1_{m,n}$ and
	\begin{align*}
	U^1_{m,n}&=\big\{(a,c) \in \Pi :a_0\leq a\leq a_1,\ \lambda_0(a-1)\geq c\geq \Gamma_{m,n}(a)\big\}\\
	&\qquad\qquad \cup\big\{(a,c)\in \Pi :a_1\leq a\leq 1,\ \lambda_0(a-1)\geq c\geq \Upsilon_{m,n}(a)\big\},\\
	V^1_{m,n}&=\big\{(a,c) \in \Pi :0\leq a\leq a_1,\ -1\leq c\leq \lambda_0a-1\big\}\\
	&\qquad\qquad \cup\big\{(a,c) \in \Pi :a_1\leq a\leq 1,\ -1\leq c\leq \Upsilon_{m,n}(a)\big\}.
	\end{align*}
See Figure~\ref{fig:Proj_10_3} for a representation of $\Pi$, $U_{m,n}$, $V_{m,n}$ and $W_{m,n}$.

\begin{theorem}\label{thm:projection}
	Let $m,n\in{\mathbb N}$ be such that $m>n$ with $m$ even and $n$ odd. Then
	$$
	\pi_{a,c}({\mathsf S}^h_{m,n})=\Pi.
	$$
\end{theorem}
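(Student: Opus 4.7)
The plan is to split the argument into a short convexity reduction followed by two concrete verifications. The reduction consists in observing that $\pi_{a,c}({\mathsf S}^h_{m,n})=\pi_{a,c}({\mathsf B}^h_{m,n})$: indeed, ${\mathsf B}^h_{m,n}$ is a bounded convex body with non-empty interior, so for each $(a,c)$ the vertical line $\{(a,b,c):b\in{\mathbb R}\}$ either misses ${\mathsf B}^h_{m,n}$ or meets it in a compact segment whose two endpoints satisfy $\vertiii{\cdot}_{m,n}=1$ and project to $(a,c)$. With this in place it suffices to show $\pi_{a,c}({\mathsf B}^h_{m,n})=\Pi$.

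For the inclusion $\pi_{a,c}({\mathsf B}^h_{m,n})\subseteq\Pi$, I would take $(a,b,c)$ with $\vertiii{(a,b,c)}_{m,n}\leq 1$ and evaluate $P(x,y)=ax^m+bx^{m-n}y^n+cy^m$ at the four corners of $[-1,1]^2$. From $P(1,0)=a$ and $P(0,1)=c$ one reads off $|a|,|c|\leq 1$. Since $m$ is even and $n$ is odd, $P(1,1)=a+b+c$ and $P(1,-1)=a-b+c$, so
\[
|a+c|=\tfrac{1}{2}|P(1,1)+P(1,-1)|\leq 1.
\]
Hence $(a,c)\in\Pi$.

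For the reverse inclusion $\Pi\subseteq\pi_{a,c}({\mathsf B}^h_{m,n})$ the natural move is to look for a preimage on the ``middle slice'' $b=0$. For every $(a,c)\in\Pi$, formula~\eqref{align:bequalszero} gives $\vertiii{(a,0,c)}_{m,n}=\max\{|a|,|c|\}\leq 1$ when $ac\leq 0$ and $\vertiii{(a,0,c)}_{m,n}=|a+c|\leq 1$ when $ac>0$, both bounds being part of the definition of $\Pi$. Therefore $(a,0,c)\in{\mathsf B}^h_{m,n}$ and projects to $(a,c)$. There is no genuine obstacle in this argument; the real content is simply that the witness $b=0$ already realizes every admissible pair $(a,c)$, a fact that is convenient for the forthcoming parametrization of the ``upper'' and ``lower'' pieces of ${\mathsf S}^h_{m,n}$ as graphs $b=b(a,c)$ over $\Pi$.
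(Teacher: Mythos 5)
Your proposal is correct and follows essentially the same route as the paper: the heart of the argument in both cases is to realize every $(a,c)\in\Pi$ by the witness $b=0$ and read the conclusion off formula \eqref{align:bequalszero}. The only (cosmetic) difference is that the paper obtains the reduction to the slice $b=0$ from the symmetry of ${\mathsf B}^h_{m,n}$ with respect to that plane and gets both inclusions at once from the formula, whereas you use convexity of the vertical sections plus evaluations of $P$ at the corners of $[-1,1]^2$ for the inclusion $\pi_{a,c}({\mathsf B}^h_{m,n})\subseteq\Pi$; both versions are sound.
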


\begin{proof}
Since ${\mathsf B}_{m,n}^h$ is symmetric with respect to the plane $b=0$, it is clear that $\pi_{a,c}({\mathsf S}^h_{m,n})$ is the intersection of ${\mathsf B}^h_{m,n}$ and the plane $b=0$, that is,
	$$
	\pi_{a,c}({\mathsf S}^h_{m,n})={\mathsf B}^h_{m,n}\cap\{(a,0,c):a,c\in{\mathbb R}\}.
	$$
In other words, $(a,c)\in \pi_{a,c}({\mathsf S}^h_{m,n})$ if and only if $\vertiii{(a,0,c)}_{m,n}\leq 1$. Using \eqref{align:bequalszero} it follows straightforwardly that
	$$
	\pi_{a,c}({\mathsf S}^h_{m,n})=\Pi
	$$
as desired.
\end{proof}

Next we are going to parametrize the unit sphere ${\mathsf S}_{m,n}^h$. Recall that if $m=2n$ then ${\mathcal P}^h_{m,n}({\mathbb R})$ is nothing but the space ${\mathcal P}(^2\ell_\infty^2)$, whose unit ball was already studied by Choi and Kim in \cite{CK1,CK2} providing a formula for the norm of any quadratic form $ax^2+bxy+cy^2$ and a characterization of the extreme polynomials. A parametrization and a representation of the unit sphere of ${\mathcal P}(^2\ell_\infty^2)$ can be found in \cite{JMR2021}. 

Using the parametrization of ${\mathsf S}_{m,n}^h$ mentioned above it will be easy to localize the extreme points of ${\mathsf B}_{m,n}^h$. Let us introduce first some notations needed in this section. For every $m,n\in{\mathbb N}$ with $m>n$, $m$ even, $n$ odd, we define
	 $$
	 J_{m,n}=\frac{m}{n}\left(\frac{n}{m-n}\right)^\frac{m-n}{m}.
	 $$
Also, we shall consider the functions $F_{m,n},G_{m,n}:\Pi\rightarrow {\mathbb R}$ defined as
	\begin{equation}\label{def:Fmn}
	F_{m,n}(a,c)=\begin{cases}
		J_{m,n}\left(1-a\right)^\frac{m-n}{m}|c|^\frac{n}{m}&\text{if $(a,c)\in U^1_{m,n}$,}\\
		J_{m,n}\left(1+a\right)^\frac{m-n}{m}c^\frac{n}{m}&\text{if $(a,c)\in U^2_{m,n}$,}\\	
		J_{m,m-n}\left(1+c\right)^\frac{n}{m}a^\frac{m-n}{m}&\text{if $(a,c)\in V^1_{m,n}$,}\\
		J_{m,m-n}\left(1-c\right)^\frac{n}{m}|a|^\frac{m-n}{m}&\text{if $(a,c)\in V^2_{m,n}$,}\\
		1-|a+c|&\text{if $(a,c)\in W_{m,n}$,}
	\end{cases}
	\end{equation}
whenever $m\ge2n$ and
	$$
	G_{m,n}(a,c)=F_{m,m-n}(c,a)
	$$
if $m\le 2n$. Observe that for all odd $n\in{\mathbb N}$ we have
	$$
	\textcolor{black}{F_{2n,n}(c,a)= G_{2n,n}(a,c)}=
	\begin{cases}
	2\sqrt{c(a-1)}&\text{if $(a,c)\in U^1_{2,1}$,}\\
	2\sqrt{c(1+a)}&\text{if $(a,c)\in U^2_{2,1}$,}\\	
	2\sqrt{a(1+c)}&\text{if $(a,c)\in V^1_{2,1}$,}\\
	2\sqrt{a(c-1)}&\text{if $(a,c)\in V^2_{2,1}$,}\\
	1-|a+c|&\text{if $(a,c)\in W_{2,1}$.}
	\end{cases}
	$$
It was proved in \cite[Theorem 2.4]{JMR2021} that 
	$$
	{\mathsf S}_{2n,n}^h=\graph(F_{2n,n})\cup\graph(-F_{2n,n}).
	$$
More generally, in Theorem \ref{thm:parametrization} we will show that
	$$
	{\mathsf S}_{m,n}^h=
	\begin{cases}
	\graph(F_{m,n})\cup\graph(-F_{m,n})&\text{if $m\ge2n$,}\\
	\graph(G_{m,n})\cup\graph(-G_{m,n})&\text{if $m\le 2n$,}
	\end{cases}
	$$
for every $m,n\in {\mathbb N}$ with $m>2n$, $m$ even and $n$ odd. To prove this the following results will be useful.

\begin{lemma}\label{lem:regions}
	If $m,n\in {\mathbb N}$ are such that \textcolor{black}{$m \geq 2n$}, $m$ is even and $n$ is odd, \textcolor{black}{we define $\Phi:\Pi \cap \{(a,c): a\neq 0 \text{ and } c\neq0\} \rightarrow {\mathbb R}^2$ by} %
	$$
	\Phi(a,c)=\left(\frac{F_{m,n}(a,c)}{a},\frac{n F_{m,n}(a,c)}{mc}\right) 	
	$$
	Then
	\begin{align*}
	&\Phi(V^k_{m,n} \setminus \{ a = 0 \} )={\mathcal A}^k_{m,n} \cup \{(0,0)\},\quad\text{for $k\in\{1,2\}$},\\	
	&\Phi(U^k_{m,n} \setminus \{ c = 0 \} )={\mathcal B}^k_{m,n} \cup \{(0,0)\},\quad\text{for $k\in\{1,2\}$}
		\end{align*}
and
	\[
	\Phi(W_{m,n} \setminus \{ ac =0\} )= [ ( {\mathbb R}^2 \setminus \{bt = 0 \} ) \setminus \left({\mathcal A}_{m,n}\cup{\mathcal B}_{m,n}\right) ] \cup \{ (0,0)\}.	
	\]
\end{lemma}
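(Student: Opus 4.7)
The plan is to establish each of the three identities by computing $\Phi$ on the relevant piece of the definition of $F_{m,n}$ and matching the resulting boundaries in the image $(u,t)$-plane with those of $\mathcal{A}^1_{m,n}$, $\mathcal{B}^1_{m,n}$, and their complement; here I write $u = F_{m,n}(a,c)/a$ and $t = nF_{m,n}(a,c)/(mc)$ for the components of $\Phi(a,c)$. Since $(a, F_{m,n}(a,c), c)$ lies on ${\mathsf S}_{m,n}^h$, formula \eqref{eq:norm} classifies $(u,t)$ into exactly one of $\mathcal{A}_{m,n}$, $\mathcal{B}_{m,n}$, or their complement, and this trichotomy should match $V, U, W$ on the projection side. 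The $k=2$ cases follow from the symmetry $F_{m,n}(-a,-c) = F_{m,n}(a,c)$, which gives $\Phi(-a,-c) = -\Phi(a,c)$, together with $\mathcal{A}^2_{m,n} = -\mathcal{A}^1_{m,n}$ and $\mathcal{B}^2_{m,n} = -\mathcal{B}^1_{m,n}$; thus it is enough to treat $k = 1$.

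I would start with $(a,c) \in V^1_{m,n}$, $a > 0$. Then $F_{m,n}(a,c) = J_{m,m-n}(1+c)^{n/m} a^{(m-n)/m}$ and $c \in [-1, 0)$, so
\[
u = J_{m,m-n}(1+c)^{n/m} a^{-n/m}, \qquad t = \frac{n}{mc}\, J_{m,m-n}(1+c)^{n/m} a^{(m-n)/m};
\]
at $c = -1$ both reduce to $0$, accounting for the $\{(0,0)\}$ factor. For $c > -1$ we have $t<0$, and the bound $u \leq m/(m-n)$ reduces, via the constant identity $J_{m,m-n}\,\lambda_0^{n/m} = m/(m-n)$, to $1 + c \leq \lambda_0 a$; on the subregion $a \leq a_1$ this is exactly the defining boundary $c \leq \lambda_0 a - 1$, while for $a \in (a_1,1]$ it follows from the explicit form of $\Upsilon_{m,n}$ together with $\lambda_0 < m/(m-n)$.

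The critical step is $\Lambda_{m,n}(u) \leq t$. Substituting the above expressions for $u$ and $t$ into the equation \eqref{eq:equality1} defining $t = \Lambda_{m,n}(u)$, and using the constant identities $K_{m,n}\, J_{m,m-n}^{m/n} = 1$ and $K_{m,m-n}\, J_{m,n}^{m/(m-n)} = 1$ (immediate from the definitions), one checks that \eqref{eq:equality1} transforms into the implicit equation defining $c = \Gamma_{m,n}(a)$ when $a \in (a_0, a_1)$ and into the explicit relation $c = \Upsilon_{m,n}(a)$ when $a \in (a_1, 1)$. Thus the upper boundary of $V^1_{m,n}$ in the $(a,c)$-plane is precisely the preimage of the curve $t = \Lambda_{m,n}(u)$, and by connectedness of $V^1_{m,n}$ a single sample point inside $V^1_{m,n}$ fixes the sign of $t - \Lambda_{m,n}(u)$ throughout. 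An entirely analogous argument, now with $F_{m,n}(a,c) = J_{m,n}(1-a)^{(m-n)/m}|c|^{n/m}$ on $U^1_{m,n}$, yields $\Phi(U^1_{m,n}\setminus\{c=0\}) \subseteq \mathcal{B}^1_{m,n} \cup \{(0,0)\}$: the curve $c = \Gamma_{m,n}(a)$ again pulls back to $t = \Lambda_{m,n}(u)$, the curve $c = \Upsilon_{m,n}(a)$ to the meeting point $(m/(m-n),\tau_0)$ of $\Lambda_{m,n}$ and $b = g(t)$, and the boundary $c = \lambda_0(a-1)$ to $b = g(t)$ via the explicit form of $g$ from Lemma \ref{lem:AmnBmn}.

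For $W_{m,n}$, where $F_{m,n}(a,c) = 1 - |a+c|$, any interior point $(a,c) \in W_{m,n}$ with $\Phi(a,c) \in \mathcal{A}_{m,n} \cup \mathcal{B}_{m,n}$ would force two of the three pieces of \eqref{eq:norm} to equal $1$ simultaneously at $(a, F_{m,n}(a,c), c)$; by the pull-back analysis above, that can only happen on the curves $\Gamma_{m,n}$ or $\Upsilon_{m,n}$, contradicting interiority in $W_{m,n}$. Surjectivity of each inclusion is then obtained by solving the corresponding systems: for $(u_0,t_0) \in \mathcal{A}^1_{m,n}$ the equations $u_0 = b/a$, $t_0 = nb/(mc)$ and $K_{m,n}\,a\,u_0^{m/n} - c = 1$ determine $(a,b,c)$ uniquely, and reversing the chain of inequalities places $(a,c)$ in $V^1_{m,n}$; the argument for $\mathcal{B}^1_{m,n}$ and the complement is parallel. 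I expect the principal obstacle to be this last algebraic step — verifying that \eqref{eq:equality1} transforms under $\Phi$ into the defining equations of $\Gamma_{m,n}$ and $\Upsilon_{m,n}$ on the appropriate $a$-subintervals — since it requires carefully tracking fractional exponents and the splitting at $a = a_1$; everything else reduces to bookkeeping with the constant identities together with the monotonicity results of Lemmas \ref{lem:AmnBmn} and \ref{lem:Lambda}.
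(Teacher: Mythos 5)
Your ``critical step'' misidentifies which boundary pieces of the plane regions pull back to which curves in the $(b,t)$--plane, and these identifications are precisely the content of the lemma, so the error is not cosmetic. On $V^1_{m,n}$ the upper boundary for $a\le a_1$ is the \emph{line} $c=\lambda_0 a-1$, not $c=\Gamma_{m,n}(a)$ (the curve $\Gamma_{m,n}$ does not meet $V^1_{m,n}$ at all; it bounds $U^1_{m,n}$ from below), and under the $V$--branch of $F_{m,n}$ that line is mapped onto the vertical edge $\{b=\tfrac{m}{m-n},\ \tau_0\le t<0\}$ of ${\mathcal A}^1_{m,n}$, while only the arc $c=\Upsilon_{m,n}(a)$, $a\in[a_1,1)$, pulls back to the curve $t=\Lambda_{m,n}(b)$; thus \eqref{eq:equality1} does \emph{not} transform into the defining equation of $\Gamma_{m,n}$ on $(a_0,a_1)$. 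On $U^1_{m,n}$ the correspondences are likewise permuted: the arc $c=\Upsilon_{m,n}(a)$, $a\in(a_1,1)$, pulls back to the whole arc of $t=\Lambda_{m,n}(b)$ with $t\in(\tau_0,0)$ (not just the corner $(\tfrac{m}{m-n},\tau_0)$), the curve $c=\Gamma_{m,n}(a)$ pulls back to $b=g(t)$ with $t\in[-1,\tau_0]$ (not to $t=\Lambda_{m,n}(b)$), and the line $c=\lambda_0(a-1)$ is sent onto the bottom edge $t=-1$ (not onto $b=g(t)$). Since your scheme is ``identify the preimage of the bounding curves, then fix the sign of $t-\Lambda_{m,n}(u)$ by connectedness and one sample point,'' comparing against the wrong curves makes the sign argument collapse when executed; the paper instead foliates $V^1_{m,n}$ by the pencil of lines through $(0,-1)$ (along which the first coordinate of $\Phi$ is constant) and $U^1_{m,n}$, $W_{m,n}$ by the pencil through $(1,0)$ (along which the second coordinate is constant), which yields the correct boundary correspondences and surjectivity simultaneously.

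There are two further genuine gaps. First, your treatment of $W_{m,n}$ is circular: that $(a,F_{m,n}(a,c),c)$ lies on ${\mathsf S}^h_{m,n}$ is Theorem \ref{thm:parametrization}, which is proved \emph{using} this lemma; and even granting it, $\Phi(a,c)\in{\mathcal A}_{m,n}\cup{\mathcal B}_{m,n}$ would only mean the norm is computed by the ${\mathcal A}$/${\mathcal B}$ branch of \eqref{eq:norm} --- it does not force ``two branches equal $1$ simultaneously,'' so no contradiction is obtained. Second, the lemma asserts an \emph{equality} with the unbounded set $[({\mathbb R}^2\setminus\{bt=0\})\setminus({\mathcal A}_{m,n}\cup{\mathcal B}_{m,n})]\cup\{(0,0)\}$, and your one-line ``the argument for the complement is parallel'' does not address this surjectivity; in the paper it requires computing the images of the pieces $P_1$, $P_{2,1}$, $P_{2,2}$, $P_{2,3}$ explicitly (e.g.\ the full quadrant $\{b,t>0\}$ and the unbounded regions $\{b\ge\tfrac{m}{n},\ -\tfrac{n}{m}b\le t<0\}$ and $\{b>0,\ t<-1,\ t\le-\tfrac{n}{m}b\}$). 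Your surjectivity sketch for ${\mathcal A}^1_{m,n}$ and ${\mathcal B}^1_{m,n}$ (inverting $u_0=b/a$, $t_0=nb/(mc)$ together with the unit-norm branch equation) is plausible but still needs the verification that the recovered $(a,c)$ lands in $V^1_{m,n}$, resp.\ $U^1_{m,n}$, and that $F_{m,n}(a,c)=b$ there, which is exactly the boundary bookkeeping that went astray above.
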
	
	
	\begin{proof}
	 \textcolor{black}{We present the proof for each region.}
	\begin{itemize}
					\item \textcolor{black}{The region $V_{m,n}^k$: By symmetry, it is suffices to show the case $k=1$. Note that 
	\[
	\bigcup_{0 \leq \lambda \leq \lambda_0} \{ (a, \lambda a -1) : 0 < a \leq a_\lambda \} = V_{m,n}^1
	\]
	where $\lambda_0 = \frac{n}{m-n}$ (see Lemma \ref{lem:lambda_0_Jmn}) and $a_\lambda \in (0,1]$ is the value satisfying that $\lambda a_\lambda -1 = \Upsilon_{m,n}(a_\lambda)$ for $0\leq \lambda \leq \lambda_0$. See Figure~\ref{fig:Proj_10_3} or Figure~\ref{fig:Regions_Phi} for a representation of $V_{m,n}^1$ and Figure~\ref{fig:Image_Regions_Phi} for a representation of $\Phi(V_{m,n}^1)$. We shall check that 
	\begin{equation}\label{V_mn_claim}
	\{ \Phi (a, \lambda a - 1) : 0 < a \leq a_\lambda \} = \left\{ \left( b_\lambda , t \right) : \Lambda_{m,n} (b_\lambda) \leq t < 0 \right\},
	\end{equation}
	where 
	\[
	b_\lambda := \frac{m}{m-n} \left( \frac{m-n}{n} \lambda \right)^{\frac{n}{m}}. 
	\]
	 Using \eqref{def:Fmn}, a direct computation shows that for $0 < a \leq a_\lambda$, 
	\[
	\Phi (a, \lambda a - 1) = \left( b_\lambda , \, \frac{n}{m-n} \left( \frac{m-n}{n} \lambda \right)^{\frac{n}{m}} \frac{a}{\lambda a - 1 } \right). 
	\]
	In particular, in the case when $\lambda = 0$, 
	\[
	\Phi (a, -1) = (0,0) \quad \text{ for every } 0 < a \leq 1.
	\]
Note from \eqref{eq:equality1} that $t= \frac{n}{m-n} \left( \frac{m-n}{n} \lambda \right)^{\frac{n}{m}} \frac{a}{\lambda a - 1 } \geq \Lambda_{m,n}(b_\lambda)$ if and only if 
\begin{equation}\label{eq:bt_eq1}
K_{m,n} (b_\lambda)^{\frac{m}{n}} -\frac{n}{m} \frac{b_\lambda}{t} \geq 1 - \left( \frac{m-n}{m} \right) \frac{b_\lambda}{t} \, t^{\frac{m}{m-n}}
\end{equation}
provided that $\lambda \neq 0$. By definition of $b_\lambda$ and $t$, we observe that the left-hand side and right-hand side of \eqref{eq:bt_eq1} is $\frac{1}{a}$ and $1+ \left( \frac{\lambda a}{1-\lambda a} \right)^{\frac{n}{m-n}}$; hence \eqref{eq:bt_eq1} is equivalent to that 
\begin{equation}\label{eq:bt_eq2}
\frac{1}{a} \geq 1 + \left( \frac{\lambda a }{1-\lambda a} \right)^{\frac{n}{m-n}}. 
\end{equation}
Observe that \eqref{eq:bt_eq2} is equivalent to saying that $(a, \lambda a - 1)$ satisfies $\Upsilon_{m,n}(a) \geq \lambda a -1$. Also, the equality in \eqref{eq:bt_eq2} holds if and only if $\Upsilon_{m,n}(a)=\lambda a -1$. This proves the claim \eqref{V_mn_claim}; hence $\Phi(V_{m,n}^1 \setminus \{ a = 0 \}) = \mathcal{A}_{m,n}^1 \cup \{(0,0)\}$.}
\end{itemize}

\begin{itemize} 
					\item \textcolor{black}{The region $U_{m,n}^k$: Again, by symmetry, we only consider the case $k=1$. 
For simplicity, put $\lambda_1:= -\frac{c_1}{1-a_1}$ and consider 
\[
U_{m,n}^1 \setminus \{ c=0\} = U_{m,n,1}^1 \cup U_{m,n,2}^1 \cup \{ (1,c) \in \Pi : -1 \leq c < 0 \},
\]
where
\begin{align*}
&U_{m,n,1}^1 :=  \left \{(a,c) \in U_{m,n} : a_1 \leq a < 1, \, \Upsilon_{m,n}(a) \leq c \leq \lambda_1 (a-1) \right\}, \\
&U_{m,n,2}^1 := \left \{(a,c) \in U_{m,n} : a_0 \leq a < 1, \, \lambda_1 (a-1) < c \leq \lambda_0 (a-1), \, c \geq \Gamma_{m,n}(a) \right\}.
\end{align*} 
See Figure~\ref{fig:Regions_Phi} for a representation of $U_{m,n,1}^1$ and $U_{m,n,2}^1$ and Figure~\ref{fig:Image_Regions_Phi} for a representation of $\Phi(U_{m,n,1}^1)$ and $\Phi(U_{m,n,2}^1)$. 
It is clear that $\Phi(1,c) = (0,0)$ for every $-1 \leq c <0$. 
					\item[$\circ$] The region $U_{m,n,1}^1$: Our claim is to prove 
\begin{equation}\label{Umn11}
\Phi (U_{m,n,1}^1) = \mathcal{B}_{m,n}^1 \cap \{ \tau_0 \leq t < 0 \}.
\end{equation} 
Consider $(a,c) \in U_{m,n,1}^1$ with $c=\lambda (a-1)$. Notice that $\lambda_1$ is the slope of joining $(1,0)$ and $(a_1, c_1)$. Again note from \eqref{def:Fmn} that 
\begin{equation}\label{eq:phi_a_lambda(a-1)}
\Phi (a,\lambda(a-1)) = \left ( \frac{1-a}{a} \frac{m}{n} \left( \frac{n}{m-n} \right)^{\frac{m-n}{m}} \lambda^{\frac{n}{m}} , - \left( \frac{n}{m-n} \right)^{\frac{m-n}{m}} \lambda^{\frac{n-m}{m}} \right). 
\end{equation}
For simplicity, let us put 
\begin{align*}
&b=b(a,\lambda) := \left(\frac{1-a}{a}\right) \frac{m}{n} \left( \frac{n}{m-n} \right)^{\frac{m-n}{m}} \lambda^{\frac{n}{m}};\\
&t=t(\lambda):= - \left( \frac{n}{m-n} \right)^{\frac{m-n}{m}} \lambda^{\frac{n-m}{m}}.
\end{align*}
Notice that $b(a_1, \lambda_1) = \frac{m}{m-n}$. Indeed, we can write 
\begin{align}
b(a_1,\lambda_1) 
= \frac{1-a_1}{a_1} \left( \frac{n}{m-n}\right)^{-\frac{n}{m}} \lambda_1^{\frac{n}{m}} \left( \frac{m}{m-n}\right). \label{eq:ba1lambda1}
\end{align}
On the other hand, since $(a_1, \lambda_1(a_1 -1))$ satisfies \eqref{eq:implicit_curve} and $\frac{n}{m-n}a_1 -1 = c_1$, we have that 
\begin{align*}
\frac{m}{n} \left( \frac{n}{m-n} \right)^{\frac{m-n}{m}} (1-a_1)^{\frac{m-n}{m}} \lambda_1^{\frac{n}{m}} (1-a_1)^{\frac{n}{m}} &= 1 + a_1 + c_1 \\
&= \frac{m}{m-n} a_1,
\end{align*} 
which is equivalent to 
\begin{equation}\label{eq:lambda1}
\left( \frac{n}{m-n}\right)^{-\frac{n}{m}} \left( \frac{1-a_1}{a_1} \right) \lambda_1^{\frac{n}{m}} = 1. 
\end{equation}
Combining this with \eqref{eq:ba1lambda1}, we conclude that $b(a_1,\lambda_1) = \frac{m}{m-n}$. We claim that $t(\lambda_1)$ coincides with $\tau_0$. To this end, observe from \eqref{eq:equality1} that $t(\lambda_1) = \Lambda_{m,n}(b)$ is equivalent to
\begin{align}
\left( \frac{n}{m-n} \right) \left( \frac{m-n}{m}\right)^{\frac{m}{n}} b^{\frac{m}{n}} + \frac{n}{m} & \left( \frac{n}{m-n}\right)^{\frac{n-m}{m}} \lambda_1^{\frac{m-n}{m}} b \nonumber \\
&= 1 + \left(\frac{m-n}{m}\right) b \left( \frac{n}{m-n} \right)^{\frac{n}{m}} \lambda_1^{-\frac{n}{m}}. \label{eq:tau_0_verification}
\end{align} 
Observe that \eqref{eq:tau_0_verification} is satisfied for $b= \frac{m}{m-n}$ if and only if 
\begin{equation}\label{eq:tau_0_verification2}
\frac{n}{m-n} + \left( \frac{n}{m-n}\right)^{\frac{n}{m}} \lambda_1^{\frac{m-n}{m}} = 1 + \left( \frac{n}{m-n}\right) ^{\frac{n}{m}} \lambda_1^{-\frac{n}{m}}. 
\end{equation} 
Taking \eqref{eq:lambda1} into account, we see that \eqref{eq:tau_0_verification2} is equivalent to 
\[
\frac{n}{m-n} + \left( \frac{1-a_1}{a_1}\right) \lambda_1 = 1+ \left( \frac{1-a_1}{a_1} \right) 
\]
which is indeed true because of the fact 
\[
c_1 = \lambda_1 (a_1 - 1) = \frac{n}{m-n} a_1 -1. 
\]
Consequently, we proved that \eqref{eq:tau_0_verification} is satisfied when $b= \frac{m}{m-n}$, which implies that $t(\lambda_1) = \Lambda_{m,n} (\frac{m}{m-n}) = \tau_0$. 
Summarizing, we have proved that
\begin{equation}\label{eq:lambda=lambda1}
\Phi (a, \lambda_1(a-1)) = \left ( b(a, \lambda_1) , \, \tau_0 \right) \quad \text{ for all } a_1 \leq a < 1 
\end{equation}
and $0 < b(a,\lambda_1) \leq b(a_1,\lambda_1) = \frac{m}{m-n}$ for $a_1 \leq a < 1$. 
Next, pick 
	$$
	(a,\lambda(a-1)) \in U_{m,n,1}^1
	$$ 
with $\lambda > \lambda_1$. 
It is enough to check that the corresponding $(b(a,\lambda), t(\lambda))$ satisfies that $t(\lambda) \leq \Lambda_{m,n} (b(a,\lambda))$ which is equivalent to
\begin{equation}\label{eq:Bmn_verification}
K_{m,n} b(a,\lambda)^{\frac{m}{n}} - \frac{n}{m} \frac{b(a,\lambda)}{t(\lambda)} - 1 - \left( \frac{m-n}{m} \right) b(a,\lambda) t(\lambda)^{\frac{n}{m-n}} \leq 0.
\end{equation}
By definition of $b(a,\lambda)$ and $t(\lambda)$, \eqref{eq:Bmn_verification} is equivalent to 
\[
\left( \frac{1-a}{a} \right)^{\frac{m}{n}} \lambda + \left( \frac{1-a}{a}\right) \lambda - 1 - \left( \frac{1-a}{a}\right) \leq 0
\]
which is indeed true because $\lambda(a-1) \geq \Upsilon_{m,n}(a)$. Also, the equality in \eqref{eq:Bmn_verification} holds if and only if $\lambda(a-1)=\Upsilon_{m,n}(a)$. It follows that \eqref{Umn11} holds and the claim is proved.
						\item[$\circ$] The region $U_{m,n,2}^1$: We claim that 
\begin{equation}\label{Umn12}
\left\{ \Phi (a,c): (a,c) \in U_{m,n,2}^1 \right\} = \mathcal{B}_{m,n}^1 \cap \{ -1 \leq t < \tau_0 \}.
\end{equation}
Note first that $a_0 = \frac{n}{m}$ and $c_0 = - \frac{n}{m}$; hence $-\frac{c_0}{1-a_0} = \frac{n}{m-n}$. 
Consider $(a, c) \in U_{m,n,2}^1$ with $c=\lambda(a-1)$. Recall from \eqref{eq:phi_a_lambda(a-1)} that if $\lambda = \frac{n}{m-n}$, then 
\[
0< b(a,\lambda) = \frac{1-a}{a} \frac{m}{m-n} \leq b\left( a_0 , \lambda \right) = \frac{m-n}{n} \frac{m}{m-n} = \frac{m}{n} 
\]
and $t(\lambda)=-1$. Now, let $\frac{n}{m-n} < \lambda < \lambda_1$. We claim that the corresponding $b(a,\lambda)$ and $t(\lambda)$ satisfies that 
\begin{equation}\label{eq:balambda}
b(a,\lambda) \leq 2m \frac{t(\lambda)}{(m-n)|t(\lambda)|^{\frac{m}{m-n}} - m |t(\lambda)| - n } = g(t(\lambda)).
\end{equation} 
Note that \eqref{eq:balambda} is equivalent to
\begin{equation}\label{eq:balambda2}
\frac{ -(m-n)|t(\lambda)|^{\frac{m}{m-n}} + m |t(\lambda)| + n}{m} \leq - \frac{2t(\lambda)}{b(a,\lambda)}. 
\end{equation} 
As a matter of fact, \eqref{eq:balambda} indeed holds since $c \geq \Gamma_{m,n}(a)$, that is, 
\[
\frac{m}{n} \left( \frac{n}{m-n}\right)^{\frac{m-n}{m}} (1-a) \lambda^{\frac{n}{m}} \leq 1+a+\lambda(a-1).
\]
Thus, 
\begin{align*}
&\frac{ -(m-n)|t(\lambda)|^{\frac{m}{m-n}} + m |t(\lambda)| + n}{m} \\
&\hspace{4em} = - \frac{1}{\lambda} \frac{n}{m} + \left( \frac{n}{m-n}\right)^{\frac{m-n}{m}} \lambda^{\frac{n-m}{m}} + \frac{n}{m} \\
&\hspace{4em} \leq - \frac{n}{\lambda m } + \frac{n}{\lambda m} \frac{1}{1-a} (1+a+\lambda(a-1)) +\frac{n}{m} \\ 
&\hspace{4em}= \frac{n}{\lambda m } \left( -1 + \frac{1}{1-a} + \frac{a}{1-a} \right) \\
&\hspace{4em}= \frac{2a n }{\lambda(1-a)m} = -\frac{2t(\lambda)}{b(a,\lambda)}. 
\end{align*} 
Combining \eqref{Umn11}, \eqref{Umn12} with the fact $\Phi(1,c)=(0,0)$ for every $-1\leq c <0$, we conclude $\Phi(U_{m,n}^1 \setminus \{ c = 0 \} )=\mathcal{B}_{m,n}^1 \cup \{ (0,0) \}$. 
	\item The region $W_{m,n}$: By symmetry it will be enough to consider the region $W_{m,n} \cap \{(a,c)\in\Pi : a >0 \}$. 
						\item[$\circ$] The region $P_1:= W_{m,n} \cap \{(a,c)\in\Pi: a,c > 0\}$ (see Figure~\ref{fig:Regions_Phi}): 
	Consider $(a,c) \in P_1$ with $a+c=k$ and $0<k\leq 1$. Then 
\[
\Phi (a,c) = \left( \frac{1-k}{a}, \frac{n}{m} \frac{1-k}{k-a} \right). 
\] 
Then for each $0 < k \leq 1$
\[
\{ \Phi (a,c) : a,c > 0 \text{ and } a+c=k\} = \left\{ \left( \frac{1-k}{a}, \frac{n}{m} \frac{1-k}{k-a} \right) : 0 < a < k \right\}.
\]
It is an easy exercise to prove that 
\[
\bigcup_{0 < k < 1} \left\{ \left( \frac{1-k}{a}, \frac{n}{m} \frac{1-k}{k-a} \right) : 0 < a < k \right\} = \{ (b,t)\in\mathbb{R}^2 : b,t > 0\}
\]
which implies that 
\[
\{ \Phi (a,c) : (a,c) \in P_1 \} = \{ (b,t)\in\mathbb{R}^2 : b,t > 0\} \cup \{(0,0)\}. 
\]
			\item[$\circ$] The region $P_2:= W_{m,n} \cap \{(a,c)\in\Pi: a>0, \, c < 0\}$: We divide $P_2$ into three regions: $P_2 = P_{2,1} \cup P_{2,2} \cup P_{2,3}$, where 
		\begin{align*}
		&P_{2,1} = \left\{ (a,c) \in \Pi: 0 < a<1, \, c = \lambda a - 1, \, \frac{n}{m-n} < \lambda \leq \frac{m-n}{n}, \, c < \Gamma_{m,n}(a) \right\}; \\
		&P_{2,2} = \left\{ (a,c) \in \Pi : 0 < a<1, \, c = \lambda a - 1, \, \lambda \geq \frac{m-n}{n}, \, a+c \leq 0 \right\}; \\
		&P_{2,3} = \left\{ (a,c) \in \Pi : 0<a<1, \, c = \lambda(a - 1), 0 < \lambda < \frac{n}{m-n}, \, a+c \geq 0 \right\}. 
		\end{align*}
	See Figure~\ref{fig:Regions_Phi} for a representation of $P_{2,1}$, $P_{2,2}$ and $P_{2,3}$ and Figure~\ref{fig:Image_Regions_Phi} for a representation of $\Phi(P_{2,1})$, $\Phi(P_{2,2})$ and $\Phi(P_{2,3})$.
					\item[--] The region $P_{2,1}$: Pick a point $(a,c) \in P_{2,1}$ with $c=\lambda a -1$. Observe that 
\[
\Phi (a,\lambda a - 1) = \left( 1+ \lambda , \frac{n}{m} \frac{a(1+\lambda)}{\lambda a - 1} \right). 
\]
It is clear that $\frac{m}{m-n} < 1+\lambda \leq \frac{m}{n}$. Observe that $(b,t) = \left( 1+ \lambda , \frac{n}{m} \frac{a(1+\lambda)}{\lambda a - 1} \right)$ satisfies that $b > g(t)$ if and only if 
\begin{equation}\label{eq:b_geq_g(t)}
(\dagger) := (m-n) \left ( \frac{n}{m} \frac{a(1+\lambda)}{|\lambda a - 1 | } \right)^{\frac{m}{m-n}} - n \frac{a(1+\lambda)}{|\lambda a - 1 |} - n < \frac{2an}{\lambda a - 1 }. 
\end{equation}
Indeed, \eqref{eq:b_geq_g(t)} holds since $c < \Gamma_{m,n}(a)$, that is, 
\begin{equation}\label{eq:ac_condition_f}
\frac{m}{n} \left( \frac{n}{m-n} \right)^{\frac{m-n}{m}} (1-a)^{\frac{m-n}{m}} |\lambda a - 1|^{\frac{n}{m}} > a (1+\lambda) 
\end{equation}
More precisely, applying \eqref{eq:ac_condition_f} to $(\dagger)$, we have 
\begin{align*}
(\dagger) < \frac{n}{|\lambda a - 1|} (1-a)- n \frac{a(1+\lambda)}{|\lambda a - 1 |} - n = \frac{2an}{\lambda a -1}.
\end{align*} 
Moreover, if $\lambda = \frac{m-n}{n}$, then
\[
\left\{ \Phi \left( a, \frac{m-n}{n} a -1 \right) : 0 < a \leq \frac{n}{m}\right\} = \left\{ \left(\frac{m}{n}, t\right) : -1 \leq t < 0\right\};
\]
hence 
\begin{equation}\label{claim_nontrivial_W}
\Phi (P_{2,1})= \left\{ (b,t) \in \mathbb{R}^2 : \frac{m}{m-n} < b \leq \frac{m}{n}, \, t < 0, \, b > g(t) \right\}.
\end{equation} 
						\item[--] The region $P_{2,2}$: Pick a point $(a,c) \in P_{2,2}$ with $c=\lambda a -1$.  
Recall that $\Phi (a,\lambda a - 1) = \left( 1+ \lambda , \frac{n}{m} \frac{a(1+\lambda)}{\lambda a - 1} \right) =: (b,t)$. Note that $b = 1+\lambda \geq \frac{m}{n}$ and $a+c=a(1+\lambda)\leq 1$; hence for $0<a <1$ we have 
\[
0 > t = \frac{n}{m} \frac{a(1+\lambda)}{\lambda a -1} \geq \frac{n}{m} \left( -\frac{1}{a}\right) a (1+\lambda) = - \frac{n}{m} b.
\]
This proves that 
\begin{equation}\label{claim_nontrivial_W2}
\Phi (P_{2,2}) = \left\{ (b,t) : b \geq \frac{m}{n}, \, -\frac{n}{m}b \leq t < 0 \right\}.
\end{equation}
					\item[--] The region $P_{2,3}$: Let $(a,c)\in P_{2,3}$ with $c=\lambda (a-1)$ and notice that 
\[
\Phi (a, \lambda(a-1)) = \left( \frac{(1+\lambda)(1-a)}{a}, -\frac{n}{m} \left(\frac{1+\lambda}{\lambda} \right) \right) =: (b,t) \]
and $\lambda \neq 0$. 
It is clear that $t= -\frac{n}{m}\left(\frac{1+\lambda}{\lambda}\right) < -1$ since $\lambda < \frac{n}{m-n}$. 
Moreover, as $a+c=a+\lambda(a-1)\geq 0$, we have 
\[
b= (1+\lambda)\left( \frac{1+\lambda}{\lambda}\right) \leq \frac{1+\lambda}{\lambda} = -\frac{m}{n} t,
\]
that is, $t \leq -\frac{n}{m}b$. This verifies the equality 
\begin{equation}\label{claim_nontrivial_W3}
\Phi (P_{2,3}) = \left\{ (b,t) \in \mathbb{R}^2 : b>0, \, t < -1, \, t \leq -\frac{n}{m}b \right\}.
\end{equation} 
Consequently, by symmetry again, we complete the proof of the assertion 
\[
\Phi(W_{m,n} \setminus \{ ac =0\} )= [ ( {\mathbb R}^2 \setminus \{bt = 0 \} ) \setminus \left({\mathcal A}_{m,n}\cup{\mathcal B}_{m,n}\right) ] \cup \{ (0,0)\}.	
\] \qedhere
}
\end{itemize}
	\end{proof} 

\begin{figure}
	\centering
	\includegraphics[height=.8\textwidth,keepaspectratio=true]{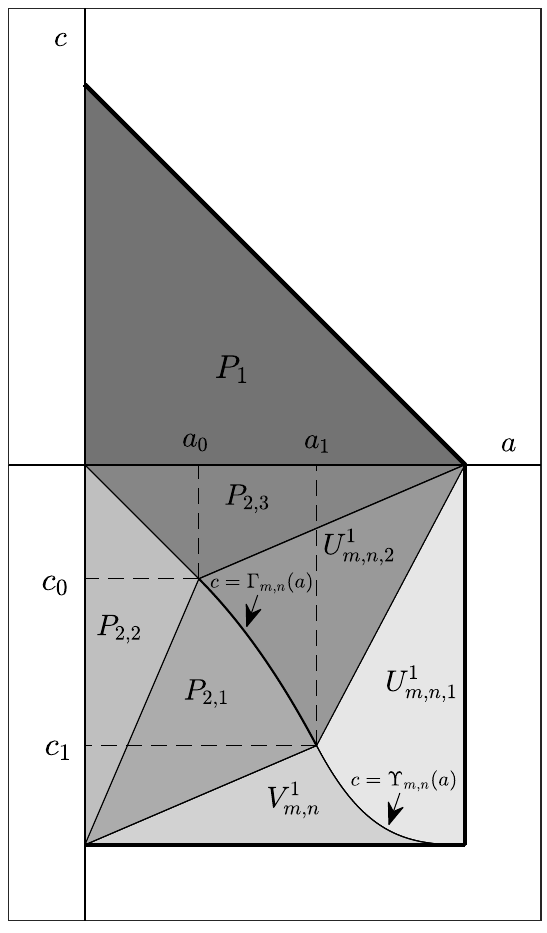}
	\caption{Representation of the regions $P_1$, $P_2$, $P_{2,1}$, $P_{2,2}$, $P_{2,3}$, $U^1_{m,n,1}$, $U^1_{m,n,2}$ and $V_{m,n}^1$ employed in Lemma~\ref{lem:regions} and its proof. Here we have considered the case $m=10$ and $n=3$. }\label{fig:Regions_Phi}
\end{figure}

\begin{figure}
	\centering
	\includegraphics[width=\textwidth,keepaspectratio=true]{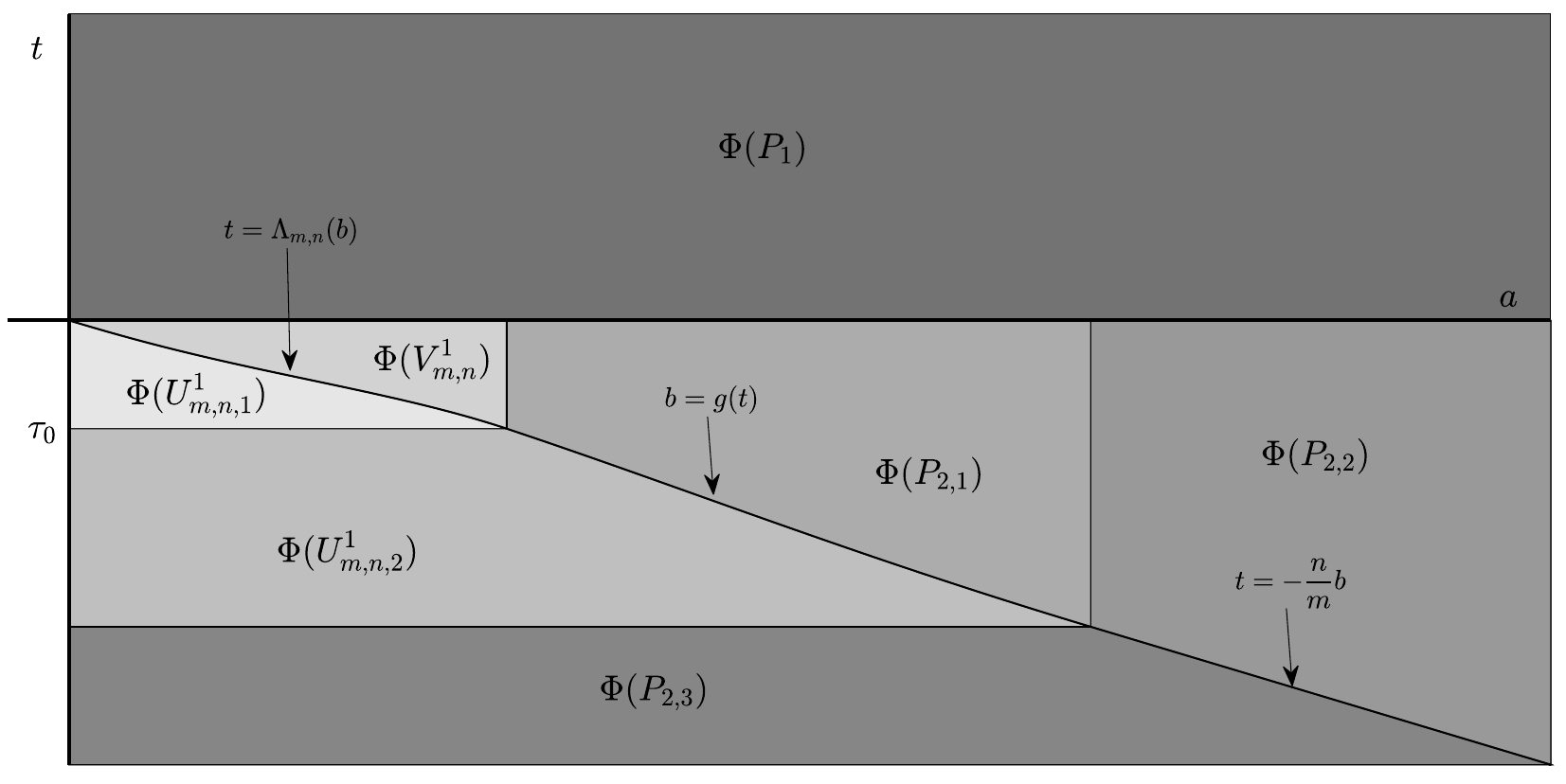}
	\caption{Representation of the image by $\Phi$ of the regions $P_1$, $P_2$, $P_{2,1}$, $P_{2,2}$, $P_{2,3}$, $U^1_{m,n,1}$, $U^1_{m,n,2}$ and $V_{m,n}^1$ employed in Lemma~\ref{lem:regions} and its proof. Here we have considered the case $m=10$ and $n=3$. }\label{fig:Image_Regions_Phi}
\end{figure}
	
\begin{theorem}\label{thm:parametrization}
	For every $m,n\in {\mathbb N}$ with $m>n$, $m$ even, $n$ odd we have
	$$
	{\mathsf S}_{m,n}^h=
	\begin{cases}
	\graph(F_{m,n})\cup\graph(-F_{m,n})&\text{if $m\ge2n$,}\\
	\graph(G_{m,n})\cup\graph(-G_{m,n})&\text{if $m\le 2n$.}
	\end{cases}
	$$
	The reader can find a representation of ${\mathsf S}_{m,n}^h$ in Figure~\ref{fig:Ball_10_3} for the case $m=10$ and $n=3$.
\end{theorem}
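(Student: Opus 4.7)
My plan is to reduce to the case $m\geq 2n$ via the identity \eqref{eq:reduction} and the definition $G_{m,n}(a,c):=F_{m,m-n}(c,a)$, which makes the case $m\leq 2n$ automatic once $m\geq 2n$ is settled for all admissible pairs. From now on assume $m\geq 2n$. The first observation is that, for each fixed $(a,c)\in\mathbb{R}^2$, the one-variable function $\phi_{a,c}(b):=\vertiii{(a,b,c)}_{m,n}$ is even, convex and coercive: evenness follows because $m$ is even and $n$ is odd, so the substitution $y\mapsto -y$ in the defining supremum leaves the norm invariant while carrying $b$ into $-b$; convexity and coercivity are inherited from the underlying norm. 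Combined with \eqref{align:bequalszero} and Theorem~\ref{thm:projection}, this forces $\{b:\phi_{a,c}(b)\leq 1\}$ to be a bounded symmetric interval $[-b_*(a,c),b_*(a,c)]$ precisely when $(a,c)\in\Pi$, and the fibre of ${\mathsf S}^h_{m,n}$ over $(a,c)$ then equals $\{(a,\pm b_*(a,c),c)\}$. Hence the theorem reduces to verifying the pointwise identity $\vertiii{(a,F_{m,n}(a,c),c)}_{m,n}=1$ for every $(a,c)\in\Pi$.

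The next step is to verify this identity region by region. Because the norm is preserved by $(a,b,c)\mapsto(-a,b,-c)$ (a direct computation using that $m$ is even and $m-n$ is odd, so $(-1)^{m-n}=-1$) and $F_{m,n}$ is invariant under $(a,c)\mapsto(-a,-c)$ (the definition glues $V_{m,n}^2=-V_{m,n}^1$ and $U_{m,n}^2=-U_{m,n}^1$ symmetrically), it suffices to treat $V_{m,n}^1$, $U_{m,n}^1$ and the portion of $W_{m,n}$ with $a+c\geq 0$. In each region I would set $b:=F_{m,n}(a,c)\geq 0$ and invoke Lemma~\ref{lem:regions} to identify which branch of the norm formula \eqref{eq:norm} applies. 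For $(a,c)\in V_{m,n}^1$ with $ac\neq 0$, Lemma~\ref{lem:regions} places $\Phi(a,c)=(b/a,nb/(mc))\in\mathcal{A}^1_{m,n}$, so \eqref{eq:norm} yields $\vertiii{(a,b,c)}_{m,n}=\bigl|K_{m,n}a|b/a|^{m/n}-c\bigr|$; plugging in $b=J_{m,m-n}(1+c)^{n/m}a^{(m-n)/m}$ and using the arithmetic identity $K_{m,n}J_{m,m-n}^{m/n}=1$ collapses this to $|(1+c)-c|=1$. For $(a,c)\in U_{m,n}^1$ with $ac\neq 0$, one has $\Phi(a,c)\in\mathcal{B}_{m,n}^1$, so \eqref{eq:norm} gives $\bigl|K_{m,m-n}c|b/c|^{m/(m-n)}-a\bigr|$; the twin identity $K_{m,m-n}J_{m,n}^{m/(m-n)}=1$ together with $c\leq 0$ on $U_{m,n}^1$ reduces this to $|(a-1)-a|=1$. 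On $W_{m,n}$ the formula is immediate: $|a+c|+|b|=|a+c|+(1-|a+c|)=1$. Degenerate cases ($a=0$, $c=0$, or $\Phi(a,c)=(0,0)$) are handled directly from \eqref{align:bequalszero} and the axial branches of \eqref{eq:norm}.

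The principal obstacle, beyond the region book-keeping, is ensuring that the piecewise definition of $F_{m,n}$ is internally consistent across the arcs separating the regions (for instance $c=\Gamma_{m,n}(a)$ between $U_{m,n}^1$ and $W_{m,n}$, $c=\Upsilon_{m,n}(a)$ between $U_{m,n}^1$ and $V_{m,n}^1$, or the line $c=\lambda_0 a-1$). On each such arc two candidate branches of \eqref{eq:norm} must yield the same value; this matching is guaranteed by the very defining equations of $\Gamma_{m,n}$ (equation~\eqref{eq:implicit_curve}) and of $\Lambda_{m,n}$ (Lemma~\ref{lem:Lambda}), the latter being constructed precisely as the locus along which the two relevant branches of \eqref{eq:norm} coincide. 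The algebraic miracle underlying the whole construction is the reciprocal pair $K_{m,n}J_{m,m-n}^{m/n}=K_{m,m-n}J_{m,n}^{m/(m-n)}=1$; once these identities and the boundary consistency are in place, the parametrization ${\mathsf S}_{m,n}^h=\graph(F_{m,n})\cup\graph(-F_{m,n})$ follows, finishing the case $m\geq 2n$ and hence the theorem.
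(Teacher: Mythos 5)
Your computational core coincides with the paper's: the same region-by-region verification that $\vertiii{(a,F_{m,n}(a,c),c)}_{m,n}=1$, driven by Lemma~\ref{lem:regions} to select the correct branch of \eqref{eq:norm}, the reciprocal identities $K_{m,n}J_{m,m-n}^{m/n}=K_{m,m-n}J_{m,n}^{m/(m-n)}=1$, the sign information $c<0$ on $U^1_{m,n}$, $a\ge 0$ on $V^1_{m,n}$, and the trivial computation on $W_{m,n}$; the reduction of $m\le 2n$ to $m\ge 2n$ via \eqref{eq:reduction} is also the same. Where you genuinely diverge is the wrapper that turns this single family of identities into the set equality. The paper argues that $F_{m,n}$ is concave, so that $\graph(F_{m,n})\cup\graph(-F_{m,n})$ is itself the unit sphere of a norm, and then invokes the fact that two norms coincide once one unit sphere is contained in the other; you instead slice vertically, using that $b\mapsto\phi_{a,c}(b)=\vertiii{(a,b,c)}_{m,n}$ is even, convex and coercive to describe the fibres of ${\mathsf S}^h_{m,n}$ over $\Pi$. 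Your route avoids having to verify the concavity of the piecewise-defined $F_{m,n}$ (which the paper asserts without proof), and it uses Theorem~\ref{thm:projection} exactly where it belongs.

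There is, however, one step in your fibre analysis that is not forced by evenness, convexity and coercivity alone. Evenness and convexity give $\phi_{a,c}(0)=\min_b\phi_{a,c}(b)$, so over a point of $\partial\Pi$, where $\phi_{a,c}(0)=\vertiii{(a,0,c)}_{m,n}=1$, the level set $\{\phi_{a,c}=1\}$ equals the whole sublevel interval $[-b_*,b_*]$ and could a priori be a nondegenerate segment (the function $\max\{1,|b|\}$ is even, convex and coercive). Hence checking $\vertiii{(a,F_{m,n}(a,c),c)}_{m,n}=1$ on $\Pi$ yields $\graph(\pm F_{m,n})\subseteq{\mathsf S}^h_{m,n}$ and the reverse inclusion over the interior of $\Pi$, but over $\partial\Pi$, where $F_{m,n}\equiv 0$, you still need $b_*(a,c)=0$, that is, $\vertiii{(a,b,c)}_{m,n}>1$ for every $b\neq 0$. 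This is easy to supply: if $|a+c|=1$ with $ac\ge 0$, evaluating the trinomial at $(1,1)$ and $(1,-1)$ gives the value $|a+c|+|b|=1+|b|$; if $|a|=1$ (resp.\ $|c|=1$) with $ac<0$, evaluate at $(1,y)$ (resp.\ $(x,1)$) with the sign of $y$ chosen so that $by^n>0$ and $|y|$ small, using $n<m$ to make the $b$-term dominate. With that one-line patch your plan is complete and delivers the theorem.
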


\begin{proof}
According to \eqref{eq:reduction} we have that 
$$
\vertiii{(a,b,c)}_{m,n}=\vertiii{(c,b,a)}_{m,m-n},
$$
which allows us to focus our efforts on the case where $m\ge 2n$. As a matter of fact we can assume that $m> 2n$.

Observe that any two norms $\|\cdot\|_a$ and $\|\cdot\|_b$ coincide on a linear space if and only if $\|x\|_a=1$ for every $x$ such that $\|x\|_b=1$. Since the mappings $F_{m,n}$ can be easily proved to be concave using elementary differential calculus, the centrally symmetric set
	$$
	{\mathsf S}_{m,n}^{h,*}=\graph(F_{m,n})\cup\graph(-F_{m,n})
	$$
must be the unit sphere of a norm in ${\mathbb R}^3$. Hence, in order to prove that ${\mathsf S}^h_{m,n}={\mathsf S}^{h,*}_{m,n}$ we just need to show that ${\mathsf S}^{h,*}_{m,n}\subset {\mathsf S}^h_{m,n}$. Actually it suffices to show that $\graph(F_{m,n})\subset {\mathsf S}^h_{m,n}$ due to the symmetry of ${\mathsf S}^h_{m,n}$ with respect to the plane $b=0$. Let us take $(a,b,c)\in \graph(F_{m,n})$, that is 
	$$
	b=F_{m,n}(a,c).
	$$
We will study separately the five cases $(a,c)\in U_{m,n}^k$, $(a,c)\in V_{m,n}^j$, $1\leq k,j\leq 2$ and $(a,c)\in W_{m,n}$.

\noindent\fbox{Case 1: $(a,c)\in U_{m,n}^1$}

From Lemma \ref{lem:regions} we have that
	$$
	\Phi(a,c)=\left(\frac{F_{m,n}(a,c)}{a},\frac{n F_{m,n}(a,c)}{mc}\right)\in{\mathcal B}^1_{m,n}.
	$$
Then according to \eqref{eq:norm} and the fact that
	$$
	F_{m,n}(a,c)=J_{m,n}(1-a)^\frac{m-n}{m}|c|^\frac{n}{m},
	$$
it follows that
	\begin{align*}
	\vertiii{(a,\pm b,c)}_{m,n}&=\left|K_{m,m-n}c\left|\frac{F_{m,n}(a,c)}{c}\right|^\frac{m}{m-n}-a\right|\\
	&=\left|K_{m,m-n}\left(J_{m,n}\right)^\frac{m}{m-n}\frac{c}{|c|}(1-a)-a\right|\\
	&=\left|\frac{c}{|c|}(1-a)-a\right|=|a-1-a|=1.
	\end{align*}
	In the last two steps we took into consideration that $K_{m,m-n}\left(J_{m,n}\right)^\frac{m}{m-n}=1$ and that $c<0$ whenever $(a,c)\in U_{m,n}^1$.
	
\noindent\fbox{Case 2: $(a,c)\in U_{m,n}^2$}

By symmetry, $(-a,-c)\in U^1_{m,n}$ and $F_{m,n}(-a,-c)=F_{m,n}(a,c)$. Then, using the previous case
	$$
	\vertiii{(a,b,c)}_{m,n}=\vertiii{(-a,-b,-c)}_{m,n}=\vertiii{(-a,b,-c)}_{m,n}=1.
	$$
	
\noindent\fbox{Case 3: $(a,c)\in V_{m,n}^1$}

From Lemma \ref{lem:regions} we have that
	$$
	\Phi(a,c)=\left(\frac{F_{m,n}(a,c)}{a},\frac{n F_{m,n}(a,c)}{mc}\right)\in{\mathcal A}^1_{m,n}.
	$$
Then according to \eqref{eq:norm} and the fact that
	$$
	F_{m,n}(a,c)=J_{m,m-n}\left(1+c\right)^\frac{n}{m}a^\frac{m-n}{m},
	$$
it follows that
	\begin{align*}
	\vertiii{(a,\pm b,c)}_{m,n}&=\left|K_{m,n}a\left|\frac{F_{m,n}(a,c)}{a}\right|^\frac{m}{n}-c\right|\\	
	&=\left|K_{m,n}\left(J_{m,m-n}\right)^\frac{m}{n}(1+c)-c\right|=1.
	\end{align*}
In the last two steps we took into consideration that $K_{m,n}\left(J_{m,m-n}\right)^\frac{m}{n}=1$ and that $a\ge 0$ whenever $(a,c)\in V_{m,n}^1$.

\noindent\fbox{Case 4: $(a,c)\in V_{m,n}^2$}

By symmetry, $(-a,-c)\in V^1_{m,n}$ and $F_{m,n}(-a,-c)=F_{m,n}(a,c)$. Then, using the previous case
	$$
	\vertiii{(a,b,c)}_{m,n}=\vertiii{(-a,-b,-c)}_{m,n}=\vertiii{(-a,b,-c)}_{m,n}=1.
	$$
	
\noindent\fbox{Case 5: $(a,c)\in W_{m,n}$}

From Lemma \ref{lem:regions} we have that
	$$
	\Phi(a,c)=\left(\frac{F_{m,n}(a,c)}{a},\frac{n F_{m,n}(a,c)}{mc}\right)\notin{\mathcal A}_{m,n}\cup{\mathcal B}_{m,n}.
	$$
Assume that $b=F_{m,n}(a,c)=1-|a+c|=0$ and $ac\leq 0$. Observe that $a$ and $c$ have opposite (or null) signs and that $|a+c|\leq 1$. Moreover $|a+c|=1$ only if $a=\pm 1$ and $c=0$ or $a=0$ and $c=\pm 1$. The using \eqref{eq:norm} we easily have
	$$
	\vertiii{(\pm 1,0,0)}_{m,n}=\vertiii{(0,0,\pm 1)}_{m,n}=1.
	$$
Finally, if $b=F_{m,n}(a,c)=1-|a+c|\ne 0$ or $ac>0$ we have in both cases that $b=1-|a+c|\geq 0$. Then, according to \eqref{eq:norm} 
it follows that
	\begin{align*}
	\vertiii{(a, b,c)}_{m,n}&=|a+c|+|b|=|a+c|+1-|a+c|=1.
	\end{align*}
\end{proof}

\begin{figure}
	\centering
	\includegraphics[height=.8\textwidth,keepaspectratio=true]{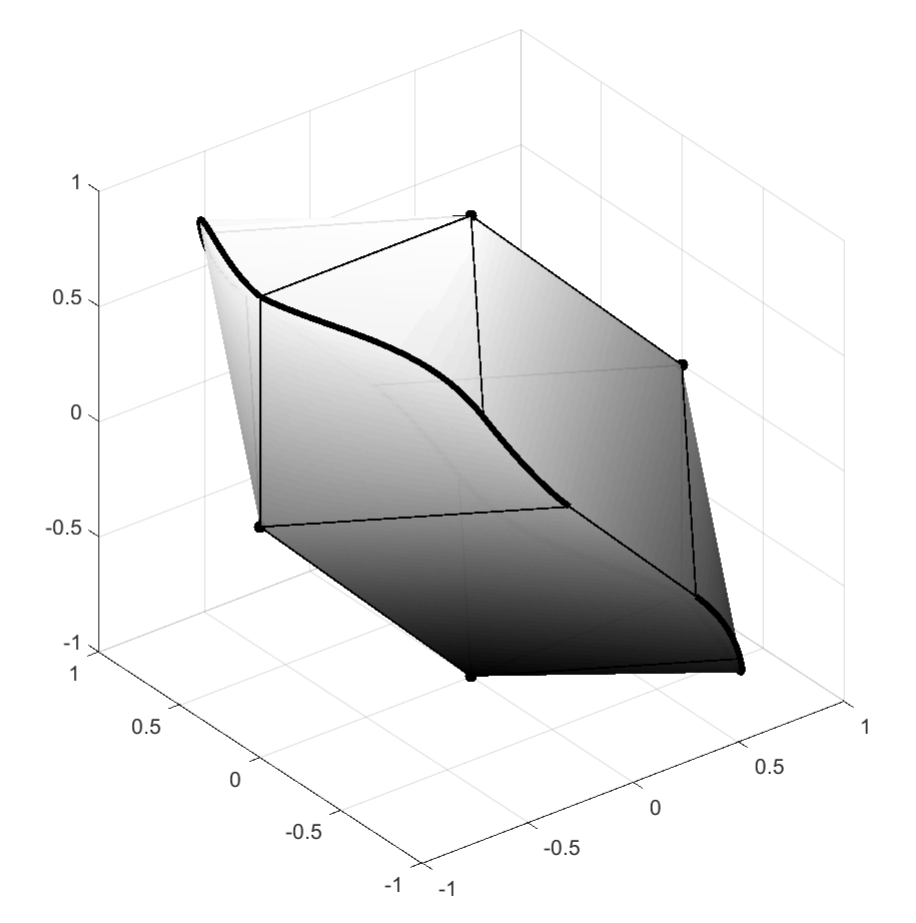}
	\caption{Representation of ${\mathsf S}_{m,n}^h$. Here we have considered the case $m=10$ and $n=3$. }\label{fig:Ball_10_3}
\end{figure}

\section{The extreme points of ${\mathsf B}_{m,n}^h$}\label{sec:extreme}

The parametrization of ${\mathsf S}_{m,n}^h$ found in Theorem~\ref{thm:parametrization} together with the fact that most of the surfaces that take part in the graph of $F_{m,n}$ are ruled surfaces help tremendously to localize the extreme points of ${\mathsf B}_{m,n}^h$. This is addressed in the main result of this section.

\begin{theorem}\label{thm_extreme_points}
	Let $m,n$ be positive integers with $m\ge 2n$, $m$ even and $n$ odd. Then $\ext({\mathsf B}_{m,n}^h)$ consists of the points
	$$
	\pm (1,0,0), \quad \pm (0,0,1),
	$$
	$$
	\pm \left(a,\pm J_{m,n}\frac{(1-a)^\frac{m-n}{m}a^\frac{m-n}{m}}{(1-a)^\frac{m-n}{n}+a^\frac{m-n}{n}},-\frac{a^\frac{m-n}{n}}{(1-a)^\frac{m-n}{n}+a^\frac{m-n}{n}}\right),\quad \text{with $a\in\left[a_1,1\right]$}
	$$
and
	$$
	\pm\left(a,1-|a+\Gamma_{m,n}(a)|,\Gamma_{m,n}(a)\right),\quad \text{with $a\in\left[\frac{n}{m},a_1\right]$},
	$$
	where $J_{m,n}=\frac{m}{n}\left(\frac{n}{m-n}\right)^\frac{m-n}{m}$, $a_1$ is the positive number and $c=\Gamma_{m,n}(a)$ is the curve from Lemma \ref{lem:lambda_0_Jmn}.
\end{theorem}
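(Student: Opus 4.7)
The plan is to exploit the parametrization $\mathsf{S}_{m,n}^h = \graph(F_{m,n}) \cup \graph(-F_{m,n})$ from Theorem~\ref{thm:parametrization}, together with the ball's symmetries $(a,b,c) \mapsto (a,-b,c)$ and $(a,b,c) \mapsto (-a,b,-c)$ and the reduction \eqref{eq:reduction}, to restrict attention to extreme points of the form $(a, F_{m,n}(a,c), c)$ under the hypothesis $m \geq 2n$. A point in the sphere is extreme iff it lies in no relatively open line segment contained in the sphere, so the task is to locate all such segments on each piece of $\graph(F_{m,n})$.

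The structural observation driving the proof is that each piece of $\graph(F_{m,n})$ is either planar or a ruled cone. Over $W_{m,n}^{\pm}$, $F_{m,n}$ is affine and the graph is the planar face $b = 1 \mp (a+c)$; its relative-interior points are therefore non-extreme. Over $U_{m,n}^1$, the formula $F_{m,n}(a,c) = J_{m,n}(1-a)^{(m-n)/m}|c|^{n/m}$ is positively homogeneous of degree $1$ in $(1-a, |c|)$, yielding $F_{m,n}(1+t(a-1), tc) = t\,F_{m,n}(a,c)$ for $t\in[0,1]$. Hence the $U^1$-piece is a cone with apex $(1,0,0)$ whose rulings terminate on the lifts of the far boundary curves $c = \Gamma_{m,n}(a)$ and $c = \Upsilon_{m,n}(a)$; analogous cones arise over $U_{m,n}^2$ (apex $(-1,0,0)$), $V_{m,n}^1$ (apex $(0,0,-1)$), and $V_{m,n}^2$ (apex $(0,0,1)$). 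Consequently the only candidate extreme points are the four apices $\pm(1,0,0)$, $\pm(0,0,1)$ and the lifts to $\graph(F_{m,n})$ of the curves $\Gamma_{m,n}$ and $\Upsilon_{m,n}$ (together with their images under the two symmetries). A direct calculation, using $a + \Gamma_{m,n}(a) \leq 0$ for $a \in [n/m, a_1]$ (which follows from \eqref{eq:implicit_curve} and Lemma~\ref{lem:lambda_0_Jmn}), identifies the lift of $\Gamma_{m,n}$ with the family $(a, 1 - |a + \Gamma_{m,n}(a)|, \Gamma_{m,n}(a))$ claimed in the theorem, while the lift of $\Upsilon_{m,n}$ produces the other claimed family.

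It remains to verify that each candidate is actually extreme. The key auxiliary fact is that a line contained in a Cobb--Douglas-type surface $b = K(1-a)^{\alpha}|c|^{1-\alpha}$ must be a ruling from its apex (logarithmic coordinates linearize the surface into a plane, where every straight line in the original coordinates corresponds to a scaling ray). Hence any on-sphere segment through the apex $(1,0,0)$ is a ruling of $U^1$ or $U^2$ starting at $(1,0,0)$, so the apex is extreme; the other apices are handled by the symmetries. For $p$ on the $\Gamma$-curve, two sheets of the sphere meet at $p$ (the $U^1$-cone and the $W^-$-plane); the ruling from $(1,0,0)$ through $p$ terminates at $p$ by definition of the cone, while strict convexity of $\Gamma_{m,n}$ as a planar curve (obtained by implicit differentiation of \eqref{eq:implicit_curve}) prevents a segment inside the $W^-$-plane from containing $p$ in its interior. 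The main obstacle will be the analogous check on the $\Upsilon$-curve, where $U^1$ and $V^1$ meet with \emph{distinct} apices $(1,0,0)$ and $(0,0,-1)$: the plan is to show that the two ruling directions at $p$ together with the tangent to $\Upsilon_{m,n}$ span $\mathbb{R}^3$, forcing every on-sphere segment through $p$ to coincide with one of the two rulings and hence to terminate at $p$. Applying the symmetries $(a,b,c) \mapsto (a,-b,c)$ and $(a,b,c) \mapsto (-a,b,-c)$ then recovers the complete list of extreme points stated in the theorem.
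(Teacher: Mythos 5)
Your reduction of the candidate set is essentially the paper's first step: the pieces of $\graph(\pm F_{m,n})$ over $U_{m,n}^{k}$ and $V_{m,n}^{k}$ are ruled from $\pm(1,0,0)$ and $\pm(0,0,1)$, the piece over $W_{m,n}$ is planar, so extreme points can only be the apices and the lifts of $c=\Gamma_{m,n}(a)$ and $c=\Upsilon_{m,n}(a)$. The verification half, however, has a genuine gap at the $\Gamma$-curve, and your stated reason is in fact inverted. Near a point $(a^*,\Gamma_{m,n}(a^*))$ with $a^*\in(a_0,a_1)$, the region $U^1_{m,n}$ is the side $c\ge\Gamma_{m,n}(a)$, while $W_{m,n}$ is the side $c<\Gamma_{m,n}(a)$, over which the sphere is the plane $b=1+a+c$ (recall $a+\Gamma_{m,n}(a)\le 0$). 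If $\Gamma_{m,n}$ were strictly convex, as you assert, its tangent line at $a^*$ would lie locally in $\{c\le\Gamma_{m,n}(a)\}$, hence in $\overline{W_{m,n}}$, and its lift to the plane $b=1+a+c$ would be a straight segment of ${\mathsf S}^h_{m,n}$ containing your candidate point in its relative interior: strict convexity of $\Gamma_{m,n}$ would \emph{disprove} extremality, not prove it. What your argument actually needs is that $\Gamma_{m,n}$ is strictly \emph{concave} (equivalently, its tangent lines locally cross into $U^1_{m,n}$, where the sphere drops strictly below the plane $b=1+a+c$). This is true and can be checked, e.g.: since $\vertiii{(a,b,c)}_{m,n}\ge|a-b+c|$, the plane $b=1+a+c$ supports ${\mathsf B}^h_{m,n}$ and cuts out a convex face whose projection onto the $ac$-plane coincides, near the curve, with the region $\{c\le\Gamma_{m,n}(a)\}$, which forces concavity; strictness follows because $J_{m,n}(1-a)^{\frac{m-n}{m}}|c|^{\frac{n}{m}}$ is affine along a line only when the line passes through $(1,0)$, and no such line can satisfy \eqref{eq:implicit_curve} identically (the slopes have opposite signs). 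None of this is in your proposal, and the convexity statement you rely on is false, so extremality of the family $\pm\left(a,1-|a+\Gamma_{m,n}(a)|,\Gamma_{m,n}(a)\right)$ is not established.

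The remaining verifications are also thinner than the paper's, which at each candidate $P$ exhibits an explicit plane $H$ with $H\cap{\mathsf B}^h_{m,n}=\{P\}$, built from the tangent vector to the curve and a direction chosen strictly between the two ruling directions inside the vertical plane through the ruling. Your $\Upsilon$-curve plan (``the two ruling directions and the tangent span $\mathbb{R}^3$, forcing every on-sphere segment through $p$ to coincide with a ruling'') is a non sequitur: spanning by itself neither shows that a segment through $p$ must stay in one of the two cone sheets nor excludes segments meeting $\graph(-F_{m,n})$ or the planar faces. Likewise, at the apices your Cobb--Douglas lemma only controls segments inside a single curved sheet and does not exclude a segment whose two halves lie in different sheets (say $\graph(F_{m,n})$ and $\graph(-F_{m,n})$); this one is easy to repair, since $(1,0)$ is a vertex of the hexagon $\Pi$ and the fibre of ${\mathsf S}^h_{m,n}$ over it is the single point $b=0$, but as written the step is missing. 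In summary, the candidate list and the ruled-surface reduction match the paper, but the extremality half still requires the (correctly oriented) strict concavity of $\Gamma_{m,n}$ and genuine arguments at the $\Upsilon$-curve and the apices, either via the paper's supporting-plane construction or a complete local segment analysis.
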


\begin{proof}
First we show that the graph of $\pm F_{m,n}$ restricted to $V_{m,n}$ is a ruled surface (to recall, the function $F_{m,n}$ is defined in \eqref{def:Fmn}, and the region $V_{m,n}$ is illustrated in Figure \ref{fig:Proj_10_3}). Due to the symmetry of $F_{m,n}$ and $V_{m,n}$, we just need to prove this by restricting $\pm F_{m,n}$ to $V_{m,n}^1$. Consider the lines in the plane $ac$ that pass though $(0,-1)$ and have slope $\lambda$ with $\lambda\in[0,\lambda_0]$, where $\lambda_0 = \frac{n}{m-n}$ as in Lemma \ref{lem:lambda_0_Jmn}. The equation of each of these lines is given by $c=\lambda a-1$. Let $(\eta(\lambda),\xi(\lambda))$ be the intersection point of $c=\lambda a-1$ and the curve $c=\Upsilon_{m,n}(a)$. Obviously, the segments $L_\lambda$ that join $(0,-1)$ and $(\eta(\lambda),\xi(\lambda))$, namely 
	$$
	L_\lambda=\{(a,\lambda a-1):a\in[0,\eta(\lambda)]\}, 
	$$
cover the whole set $V_{m,n}^1$ as $\lambda$ ranges over the interval $[0,\lambda_0]$. Also, $\pm F_{m,n}$ restricted to $L_\lambda$ is linear since, by \eqref{V_mn_claim}
	$$
	\pm F_{m,n}(a,\lambda a-1)=\pm\frac{m}{m-n} \left(\frac{m-n}{n}\lambda\right)^\frac{n}{m}a
	$$
for $a\in[0,\eta(\lambda)]$. 

A similar argument reveals that the graph of $\pm F_{m,n}$ restricted to $U_{m,n}$ is a ruled surface as well. In this case we have to consider the straight lines passing through $(1,0)$ in the $ac$ plane with slope $\lambda\in[\lambda_0,\infty)$. The equation of each of those lines is $c=\lambda(a-1)$. Let $(\eta(\lambda),\xi(\lambda))$ be the intersection point of $c=\lambda(a-1)$ and the curve $c=\Upsilon_{m,n}(a)$ or $c=\Gamma_{m,n}(a)$. Again, the segments $L_\lambda$ that join $(1,0)$ and $(\eta(\lambda),\xi(\lambda))$ cover the whole $U_{m,n}^1$ as $\lambda\ge \lambda_0$. Now it is easy to see that the restriction of $\pm F_{m,n}$ to $L_\lambda$ is affine since, by \eqref{eq:phi_a_lambda(a-1)}
	$$
	\pm F_{m,n}(a,\lambda( a-1))=(1-a) \frac{m}{n} \left( \frac{n}{m-n} \right)^{\frac{m-n}{m}} \lambda^{\frac{n}{m}} .
	$$
Finally, the graph of $\pm F_{m,n}$ when restricted to $W_{m,n}$ is formed by two flat surfaces. We conclude that all the extreme points of ${\mathsf B}^h_{m,n}$ must lie in the intersections of the graphs of $\pm F_{m,n}$ when restricted to $V^k_{m,n}$, $U^k_{m,n}$ with $k=1,2$ and $W_{m,n}$. Some of these intersections produce, in their turn, segments. That is the case of the following intersections:
\begin{itemize}
	\item $\graph(F_{m,n}|_{U^1_{m,n}})\cap \graph(-F_{m,n}|_{U^1_{m,n}})=[P_1,P_3]$,
	
	\item $\graph(F_{m,n}|_{U^2_{m,n}})\cap \graph(-F_{m,n}|_{U^2_{m,n}})=[-P_1,-P_3]$,
	
	\item $\graph(F_{m,n}|_{V^1_{m,n}})\cap \graph(-F_{m,n}|_{V^1_{m,n}})=[P_2,P_3]$,
	
	\item $\graph(F_{m,n}|_{V^2_{m,n}})\cap \graph(-F_{m,n}|_{V^2_{m,n}})=[-P_2,-P_3]$,
	
	\item $\graph(F_{m,n}|_{W_{m,n}})\cap \graph(-F_{m,n}|_{W_{m,n}})=[P_2,-P_1]\cup [-P_2,P_1]$,
	
	\item $\graph(F_{m,n}|_{V^1_{m,n}})\cap \graph(F_{m,n}|_{W_{m,n}})=[P_2,Q_1]$,
	
	\item $\graph(-F_{m,n}|_{V^1_{m,n}})\cap \graph(-F_{m,n}|_{W_{m,n}})=[P_2,Q_2]$,
	
	\item $\graph(F_{m,n}|_{V^2_{m,n}})\cap \graph(F_{m,n}|_{W_{m,n}})=[-P_2,-Q_1]$,
	
	\item $\graph(-F_{m,n}|_{V^2_{m,n}})\cap \graph(-F_{m,n}|_{W_{m,n}})=[-P_2,-Q_2]$,
\end{itemize}
where $P_1=(1,0,0)$, $P_2=(0,0,-1)$, $P_3=(1,0,-1)$, $Q_1=(a_0,1,c_0)$ and $Q_2=(a_0,-1,c_0)$. Here, $(a_0, c_0)$ is the point appeared in Lemma \ref{lem:lambda_0_Jmn}. Those segments can only contain extreme points of ${\mathsf B}^h_{m,n}$ at their endpoints, that is, at $\pm P_1$, $\pm P_2$, $\pm P_3$, $\pm Q_1$,$\pm Q_2$.

There are other intersections that provide both segments and curved lines. That happens in the following cases:
\begin{itemize}
	\item $\graph(\pm F_{m,n}|_{U^k_{m,n}})\cap \graph(\pm F_{m,n}|_{W_{m,n}})$ with $k=1,2$. These intersections include the curved lines
	\begin{equation}\label{eq:Gamma_ext_points}
	\pm\left(a,\pm F_{m,n}(a,\Gamma_{m,n}(a)),\Gamma_{m,n}(a)\right)\quad\text{with $a\in[a_0,a_1]$}
	\end{equation}
	together with the four segments
	$$
	\pm[P_1,Q_1] \quad\text{and}\quad \pm[P_1,Q_2],
	$$
Observe that $\pm Q_1$ and $\pm Q_2$ are already in the curves \eqref{eq:Gamma_ext_points}. To see this we just need to put $a=a_0$ in \eqref{eq:Gamma_ext_points}. Using the definition of $F_{m,n}$, \eqref{eq:Gamma_ext_points} reduces to
	$$
	\pm\left(a,\pm (1-|a+\Gamma_{m,n}(a)|),\Gamma_{m,n}(a)\right)\quad\text{with $a\in\left[\frac{n}{m},a_1\right]$}
	$$
\end{itemize}
Finally there are intersections that provide only curved lines:
\begin{itemize}
	\item $\graph(\pm F_{m,n}|_{U^k_{m,n}})\cap \graph(\pm F_{m,n}|_{V_{m,n}})$ with $k=1,2$.
	
	These intersections provide the curves
	\begin{equation}\label{eq:Upsilon_ext_points}
	\pm\left(a,\pm F_{m,n}(a,\Upsilon_{m,n}(a)),\Upsilon_{m,n}(a)\right)\quad\text{with $a\in[a_1,1]$}
	\end{equation}
or equivalently
	$$
	\pm \left(a,\pm J_{m,n}\frac{(1-a)^\frac{m-n}{m}a^\frac{m-n}{m}}{(1-a)^\frac{m-n}{n}+a^\frac{m-n}{n}},-\frac{a^\frac{m-n}{n}}{(1-a)^\frac{m-n}{n}+a^\frac{m-n}{n}}\right),\quad \text{with $a\in\left[a_1,1\right]$}.
	$$
Observe that putting $a=1$ in \eqref{eq:Upsilon_ext_points} we obtain $\pm P_3$ so the points $\pm P_3$ are already included in the curves \eqref{eq:Upsilon_ext_points}.
\end{itemize}

Therefore the extreme points of ${\mathsf B}_{m,n}^h$ must lie among the points $\pm P_1$ and $\pm P_2$ or in the curves described in \eqref{eq:Gamma_ext_points} and \eqref{eq:Upsilon_ext_points}.

In the rest of the proof, we will say that a plane $H$ in ${\mathbb R}^3$ is a supporting hyperplane to ${\mathsf B}_{m,n}^h$ at $P\in {\mathsf B}_{m,n}^h$ if $H\cap {\mathsf B}_{m,n}^h=\{P\}$. 
To finish the proof we just have to construct supporting planes to ${\mathsf B}_{m,n}^h$ at each point of the 8 curves represented in \eqref{eq:Gamma_ext_points} and \eqref{eq:Upsilon_ext_points} and the four points $\pm P_1$ and $\pm P_2$. The existence of such a plane would guarantee that $P$ is an extreme point of ${\mathsf B}_{m,n}^h$.

Starting with the points $\pm P_1$ and $\pm P_2$, it can be easily seen that the planes
\begin{align*}
2(a-1)+c&=0,\\
2(a+1)+c&=0,\\
a+2(c+1)&=0,\\
a+2(c-1)&=0,
\end{align*}
are supporting planes to ${\mathsf B}_{m,n}^h$ at $P_1$, $-P_1$, $P_2$ and $-P_2$ respectively. Moreover, in fact there are infinitely many supporting hyperplanes at $\pm P_1$ and $\pm P_2$. To prove this we will focus on the point $P_1=(1,0,0)$ since the other cases are similar. We just need to consider the family of hyperplanes $H_\theta$ passing through $(1,0,0)$ and parallel to the vectors $(0,1,0)$ and $(\sin\theta,0,\cos\theta-1)$ with $\theta\in (0,\pi/2)$ (see Figure~\ref{fig:hyperplane1}), that is, 
$$
H_\theta:\cos\theta(a-1)+(1-\sin\theta)c=\left|
\begin{array}{ccc}
a-1&b&c\\
\sin\theta-1&0&\cos\theta\\
0&1&0
\end{array}
\right|=0.
$$
Obviously ${\mathsf B}_{m,n}^h\cap H_\theta=\{(1,0,0)\}$ for all $\theta\in(0,\pi/2)$.

\begin{figure}
	\centering
	\includegraphics[height=.6\textwidth,keepaspectratio=true]{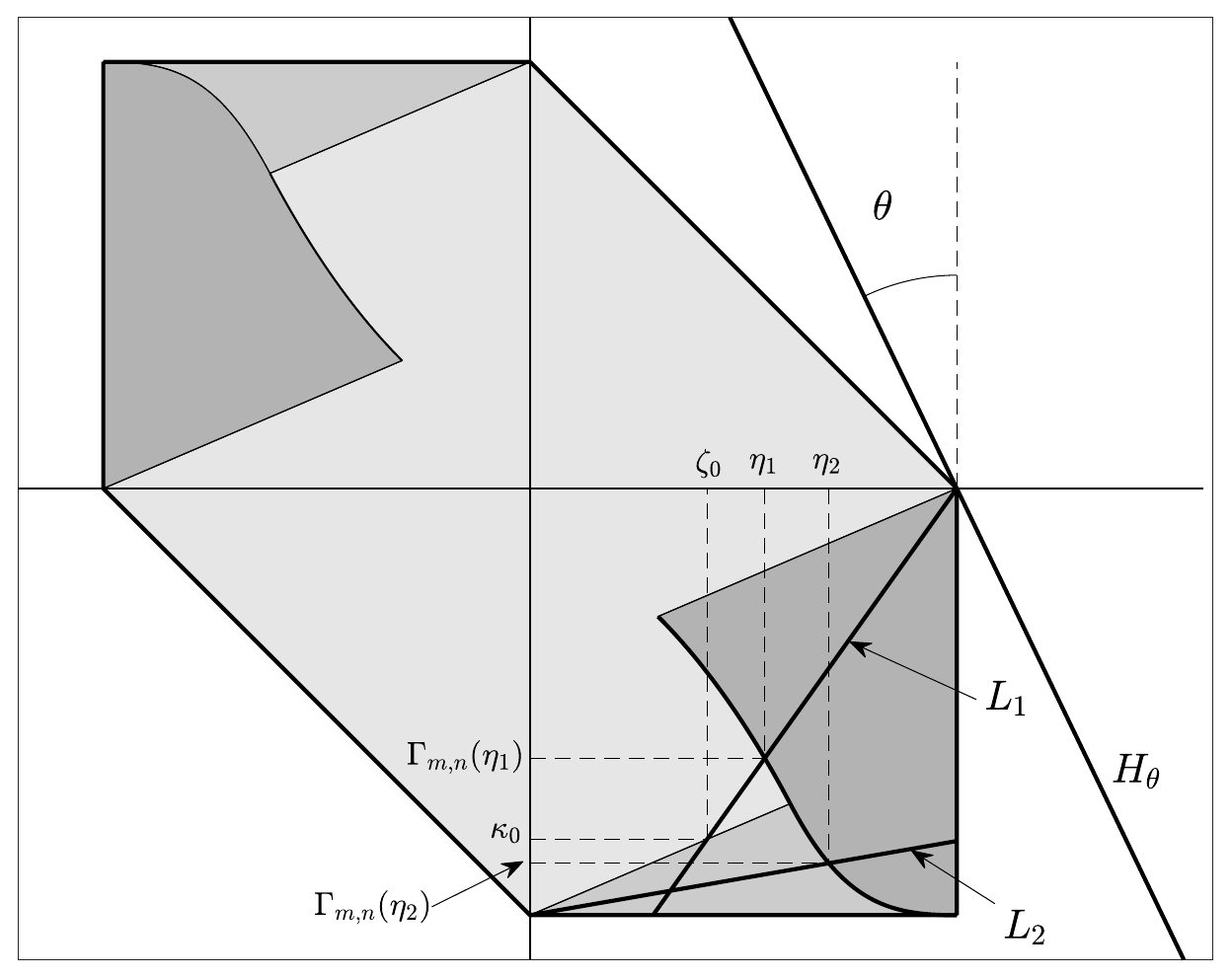}
	\caption{Representation of the projection onto the $ac$ plane of a supporting hyperplane $H_\theta$ to ${\mathsf B}_{m,n}^h$ at $(1,0,0)$. Here we have considered the case $m=10$, $n=3$ and $\theta=\pi/7$. Also, we have represented the segments $L_1$ and $L_2$ corresponding to two choices of $\eta$, namely $\eta_1=0.55$ and $\eta_2=0.7$, in the proof of Theorem~\ref{thm_extreme_points}. The point $(\zeta_0,\kappa_0)$ is the intersection of $L_1$ and the line $c=\lambda_0a-1$.}\label{fig:hyperplane1}
\end{figure}

As for the 8 curves in \eqref{eq:Gamma_ext_points} and \eqref{eq:Upsilon_ext_points}, it is enough to pay attention only to two cases due to the numerous existing symmetries. For instance take
\begin{itemize}
	\item[(a)] $\left(a, F_{m,n}(a,\Gamma_{m,n}(a)),\Gamma_{m,n}(a)\right)$ with $a\in[a_0,a_1]$.
	
	\item[(b)] $\left(a, F_{m,n}(a,\Upsilon_{m,n}(a)),\Upsilon_{m,n}(a)\right)$ with $a\in[a_1,1]$.
\end{itemize}

Let us study the curve appearing in (a). For simplicity we put
\begin{align*}
f_1(a)&=F_{m,n}(a,\Gamma_{m,n}(a)),\\
g_1(a)&=\Gamma_{m,n}(a),
\end{align*}
ending up with the curve
$$
C_1(a)=(a,f_1(a),g_1(a))
$$
for $a\in[a_0,a_1]$. Taking an arbitrary $\eta_1\in [a_0,a_1]$, a supporting plane to ${\mathsf B}_{m,n}^h$ at $C_1(\eta_1)$ must be tangent to the curve $C_1$ at $C_1(\eta_1)$, or equivalently, parallel to $C_1'(\eta)=(1,f_1'(\eta_1),g_1'(\eta_1))$. We need another vector to construct a plane passing through $C_1(\eta_1)$. Let $\gamma_1(a)=\lambda (a-1)$ with 
$$
\lambda=\frac{\Gamma_{m,n}(\eta_1)}{\eta_1-1}.
$$
The slope $\lambda$ has been chosen so that the line in the $ac$-plane given by $c=\gamma_1(a)$ with $a\in{\mathbb R}$ passes through $(\eta_1,\Gamma_{m,n}(\eta_1))$ and $(1,0)$. Consider the segment $L_1$ which results in intersecting the line $\gamma_1$ and the projection of ${\mathsf B}_{m,n}^h$ onto the $ac$ plane (see Figure~\ref{fig:hyperplane1}). Also, consider the plane $\Pi_1$ that contains $L_1$ and is parallel to the $b$ axis. Obviously $\Pi_1$ contains $(1,0,0)$. Recall we have shown above in this proof that $F_{m,n}$ is linear on the segment joining $(1,0)$ and $(\eta_1,\Gamma_{m,n}(\eta_1))$. Therefore the segment $L^*_1$ joining the points $(1,0,0)$ and $C_1(\eta_1)$ is contained in the graph of $F_{m,n}$ restricted to $L_1$. Since $F_{m,n}$ is affine on $W_{m,n}$, the restriction of $F_{m,n}$ to $L_1\cap W_{m,n}$ is a segment, say $L^\#_1$. Therefore the graph of $F_{m,n}$ restricted to $L_1$ has two confluent segments at $(\eta_1,\Gamma_{m,n}\eta_1))$, namely $L^*_1$ and $L^\#_1$. We have represented this situation in Figure~\ref{fig:hyperplanes3} where we have depicted the plane $\Pi_1$ with the graph of the restriction of $F_{m,n}$ to $L_1$. The remaining vector needed to construct a supporting hyperplane to ${\mathsf B}_{m,n}^h$ at $C_1(\eta_1)$ can be chosen in many ways. We call that vector $u_\delta$ and the resulting supporting hyperplane $H_\delta$ is:
$$
H_\delta: 
\left|
\begin{array}{c}
(a,b,c)-C_1(\eta_1)\\
C'_1(\eta_1)\\
u_\delta
\end{array}
\right|=0.
$$ 
The nature of $u_\delta$ and the index $\delta$ are revealed next. First consider two unitary vectors $v_{\alpha_1}$ and $v_{\beta_1}$ parallel to $L_1^\#$ and $L_1^*$ respectively. Here $\alpha_1$ and $\beta_1$ are, respectively, the angles between $L_1^\#$ and the plane $b=0$ and between $L_1^*$ and the plane $b=0$ (see Figure~\ref{fig:hyperplanes3}).
Hence, the desired vector $u_\delta$ can be expressed in canonical coordinates of the plane $\Pi_1$ as
$$
u_\delta=(\cos(\delta),\sin(\delta))
$$
with $\delta\in (\beta_1,\alpha_1)$. Observe that we have not included the cases $\delta=\alpha_1,\beta_1$ because ${\mathsf B}_{m,n}^h\cap H_{\alpha_1}=L^\#_1$ whereas ${\mathsf B}_{m,n}^h\cap H_{\beta_1}=L^*_1$.

\begin{figure}
	\centering
	\includegraphics[height=.75\textwidth,keepaspectratio=true]{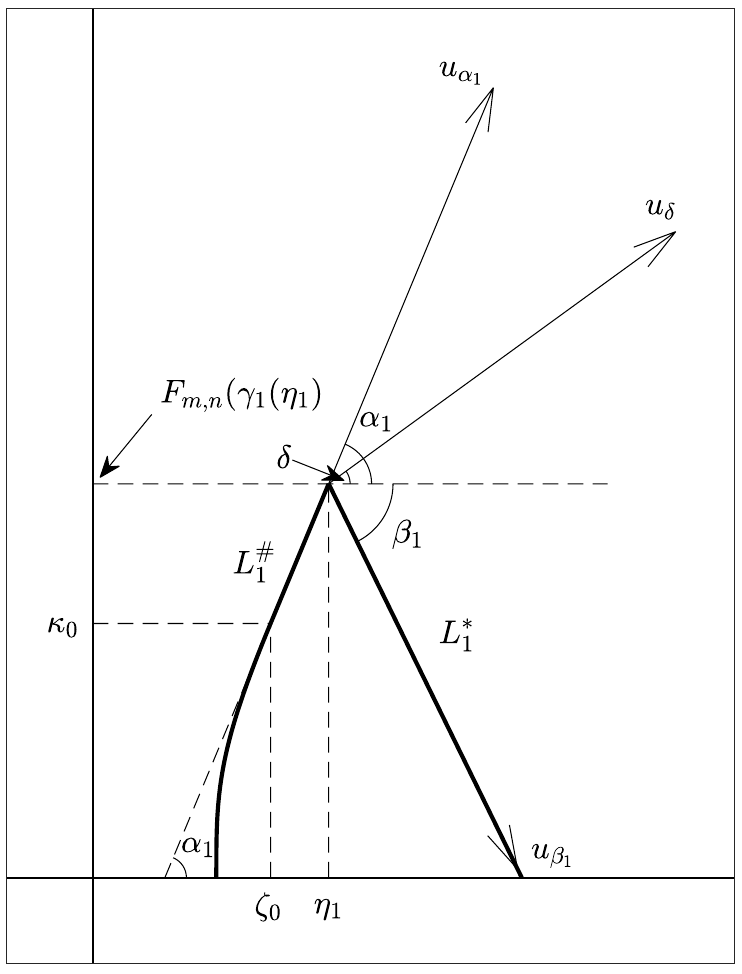}
	\caption{Representation of the plane $\Pi_1$ with the restriction of $F_{m,n}$ to $L_1$. The mapping $[\zeta_0,\eta_1]\ni a\mapsto F_{m,n}(\gamma_1(a))$ is linear and its graph is the segment $L_1^\#$. Also $[\eta_1,1]\ni a\mapsto F_{m,n}(\gamma_1(a))$ is linear too and its graph is the segment $L_1^*$. Here we have considered the case $m=10$, $n=3$, $\eta_1=0.55$ and $\delta=\pi/5$. }\label{fig:hyperplanes3}
\end{figure}

Finally we consider the curve in (b). Putting 
\begin{align*}
f_2(a)&=F_{m,n}(a,\Upsilon_{m,n}(a)),\\
g_2(a)&=\Upsilon_{m,n}(a),
\end{align*}
we end up with the curve
$$
C_2(a)=(a,f_2(a),g_2(a))
$$
for $a\in[a_1,1]$. Taking an arbitrary $\eta_2\in [a_1,1]$, a supporting plane to ${\mathsf B}_{m,n}^h$ at $C_2(\eta_2)$ must be tangent to the curve $C_2$ at $C_2(\eta_2)$, or equivalently, parallel to $C_2'(\eta_2)=(1,f_2'(\eta_2),g_2'(\eta_2))$. We need another vector to construct a plane passing through $C_2(\eta_2)$. Let $\gamma_2(a)=\lambda a-1$ with 
$$
\lambda=\frac{\Upsilon_{m,n}(\eta_2)+1}{\eta_2}.
$$
Notice that the slope $\lambda$ has been chosen so that the segment $L_2$ in the $ac$-plane given by $c=\gamma_2(a)$ with $a\in[0,1]$ passes through $(\eta_2,\Upsilon(\eta_2))$, starting at $(0,-1)$. Consider the plane $\Pi_2$ that contains $L_2$ and is parallel to the $b$ axis. Obviously $\Pi_2$ contains $(0,0,-1)$. Recall we have proved above in this proof that $F_{m,n}$ is linear on the segment joining $(0,-1)$ and $(\eta_2,\Upsilon(\eta_2))$, as can be seen in Figure~\ref{fig:hyperplanes2} where we have depicted the plane $\Pi_2$ with the representation of the restriction of $F_{m,n}$ to $L_2$. Let us call $L_2^*$ the segment joining $(-1,0,0)$ and $(\eta_2,F_{m,n}(\eta_2,\Upsilon_{m,n}(\eta_2)),\Upsilon_{m,n}(\eta_2))$. The remaining vector needed to construct a supporting hyperplane to ${\mathsf B}_{m,n}^h$ at $C_2(\eta_2)$ can be chosen in many ways. If we call that vector $v_\omega$, then the desired plane would be given by
$$
H_\omega: 
\left|
\begin{array}{c}
(a,b,c)-C_2(\eta_2)\\
C'_2(\eta_2)\\
v_\omega
\end{array}
\right|=0.
$$ 
The meaning of $v_\omega$ and the index $\omega$ will be explained in the rest of the proof. First consider the angle $\alpha_2$ between the segment $L^*_2$ and the plane $b=0$, that is,
$$
\alpha_2=\arctan(F_{m,n}(C_2(\eta_2))/\eta_2).
$$
Also let us choose a vector $v_{\beta_2}$ parallel to the tangent line to the graph of $[0,1]\ni a\mapsto F_{m,n}(\gamma_2(a))$ at $a=\eta_2$, whose slope $s_2$ in the plane $\Pi_2$ is given by 
$$
s_2=\frac{d}{da}\left[F_{m,n}(\gamma_2(a))\right]|_{a=\eta_2}=-J_{m,n}\left(\frac{1-\lambda \eta_2}{1-\eta_2}\right)^\frac{n}{m}\left(1-\frac{n}{m}\frac{1-\lambda}{1-\lambda \eta_2}\right).
$$
Hence, in canonical coordinates of the plane $\Pi_2$, $v_{\beta_2}$ could be expressed as 
$$
v_{\beta_2}=(\cos(\beta_2),\sin(\beta_2))
$$
where
$$
\beta_2=\arctan(s_2).
$$
Finally, $v_\omega$ could be chosen to be any unitary vector between $v_{\alpha_2}$ and $v_{\beta_2}$ in the plane $\Pi_2$, that is
$$
v_\omega=(\cos(\omega),\sin(\omega))
$$
with $\omega\in[\beta_2,\alpha_2)$ in coordinates of $\Pi_2$. Observe that we have not included the case $\omega=\alpha_2$ because $H_{\alpha_2}\cap {\mathsf B}_{m,n}^h=L_2^*$. We recommend to keep an eye on Figure~\ref{fig:hyperplanes2} to follow the last part of the proof.

\begin{figure}
	\centering
	\includegraphics[height=.75\textwidth,keepaspectratio=true]{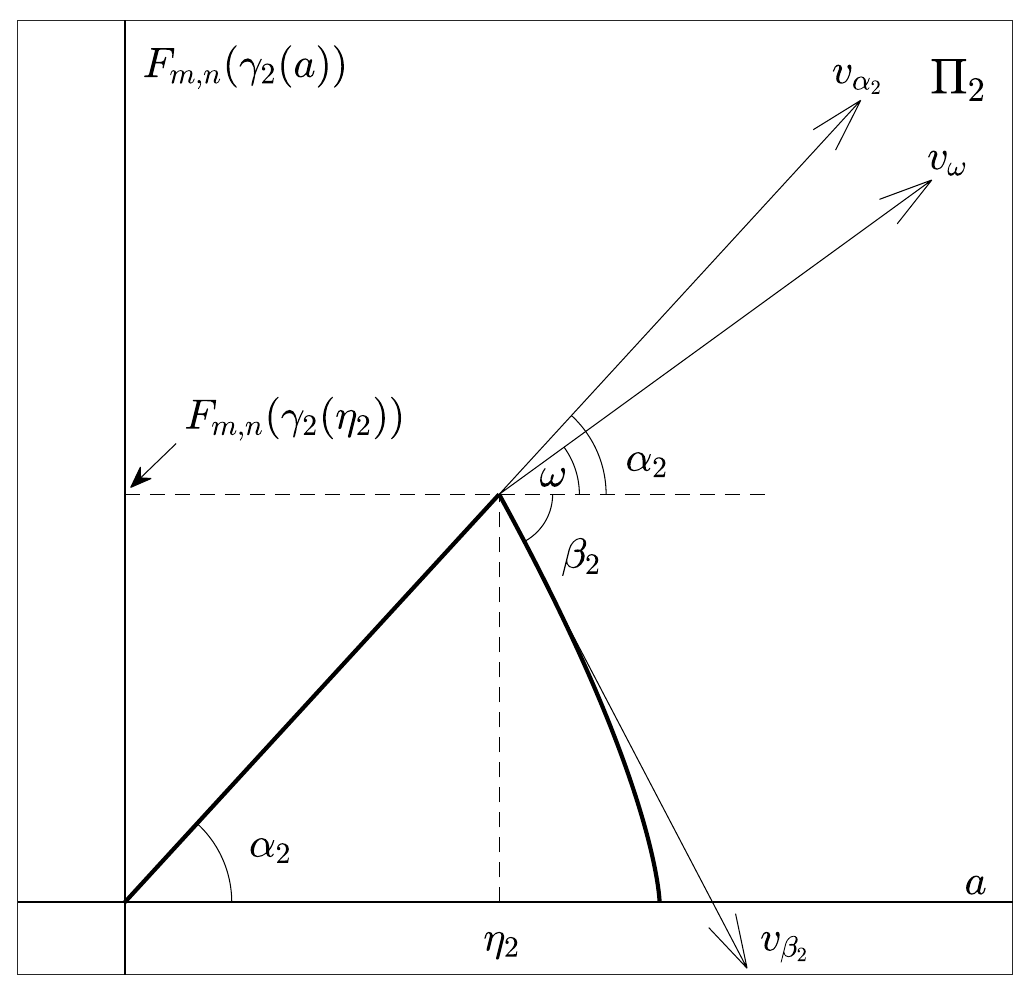}
	\caption{Representation on the plane $\Pi_2$ of the mapping $[0,1]\ni a\mapsto F_{m,n}(\gamma_2(a))$. Here we have considered the case $m=10$, $n=3$, $\eta_2=0.7$ and $\omega=\pi/5$. }\label{fig:hyperplanes2}
\end{figure}
\end{proof}

\vspace{0.3cm}

\begin{bibdiv}
	\begin{biblist}

		\bib{AJMS}{article}{
			author={Ara\'{u}jo, G.},
			author={Jim\'{e}nez-Rodr\'{\i}guez, P.},
			author={Mu\~{n}oz-Fern\'{a}ndez, G. A.},
			author={Seoane-Sep\'{u}lveda, J. B.},
			title={Polynomial inequalities on the $\pi/4$-circle sector},
			journal={J. Convex Anal.},
			volume={24},
			date={2017},
			number={3},
			pages={927--953},
			issn={0944-6532},
			%review={\MR{3684809}},
		}
		
		\bib{AJMS2}{article}{
			author={Ara\'{u}jo, G.},
			author={Jim\'{e}nez-Rodr\'{\i}guez, P.},
			author={Mu\~{n}oz-Fern\'{a}ndez, G. A.},
			author={Seoane-Sep\'{u}lveda, J. B.},
			title={Equivalent norms in polynomial spaces and applications},
			journal={J. Math. Anal. Appl.},
			volume={445},
			date={2017},
			number={2},
			pages={1200--1220},
			issn={0022-247X},
			%review={\MR{3545233}},
			doi={10.1016/j.jmaa.2016.03.039},
		}
		
		\bib{AMRS}{article}{
			author={Ara\'{u}jo, G.},
			author={Mu\~{n}oz-Fern\'{a}ndez, G. A.},
			author={Rodr\'{\i}guez-Vidanes, D. L.},
			author={Seoane-Sep\'{u}lveda, J. B.},
			title={Sharp Bernstein inequalities using convex analysis techniques},
			journal={Math. Inequal. Appl.},
			volume={23},
			date={2020},
			number={2},
			pages={725--750},
			issn={1331-4343},
			%review={\MR{4089483}},
			doi={10.7153/mia-2020-23-61},
		}
		
		\bib{AEMRS}{article}{
			author={Ara\'{u}jo, G.},
			author={Enflo, P. H.},
			author={Mu\~{n}oz-Fern\'{a}ndez, G. A.},
			author={Rodr\'{\i}guez-Vidanes, D. L.},
			author={Seoane-Sep\'{u}lveda, J. B.},
			title={Quantitative and qualitative estimates on the norm of products of
				polynomials},
			journal={Israel J. Math.},
			volume={236},
			date={2020},
			number={2},
			pages={727--745},
		}

		\bib{AK}{article}{
			author={Aron, Richard M.},
			author={Klimek, Maciej},
			title={Supremum norms for quadratic polynomials},
			journal={Arch. Math. (Basel)},
			volume={76},
			date={2001},
			number={1},
			pages={73--80},
		}
		
		\bib{MR3250297}{article}{
			author={Bayart, Fr\'ed\'eric},
			author={Pellegrino, Daniel},
			author={Seoane-Sep\'ulveda, Juan B.},
			title={The Bohr radius of the $n$-dimensional polydisk is equivalent to
				$\sqrt{(\log n)/n}$},
			journal={Adv. Math.},
			volume={264},
			date={2014},
			pages={726--746},
			doi={10.1016/j.aim.2014.07.029},
		}
	
		\bib{BMRS}{article}{
			author={Bernal-Gonz\'{a}lez, L.},
			author={Mu\~{n}oz-Fern\'{a}ndez, G. A.},
			author={Rodr\'{\i}guez-Vidanes, D. L.},
			author={Seoane-Sep\'{u}lveda, J. B.},
			title={A complete study of the geometry of 2-homogeneous polynomials on circle sectors},
			journal={Rev. R. Acad. Cienc. Exactas F\'{\i}s. Nat. Ser. A Mat. RACSAM},
			volume={114},
			date={2020},
			number={3},
			pages={Paper No. 160, 31},
		}

			%		}
		%		
		\bib{CK1}{article}{
			author={Choi, Y.S.},
			author={Kim, S.G.},
			title={The unit ball of $\scr P(^2\!l^2_2)$},
			journal={Arch. Math. (Basel)},
			volume={71},
			date={1998},
			number={6},
			pages={472--480},
		}
		
		\bib{CK2}{article}{
			author={Choi, Y.S.},
			author={Kim, S.G.},
			title={Smooth points of the unit ball of the space ${\scr P}(^2l_1)$},
			journal={Results Math.},
			volume={36},
			date={1999},
			number={1-2},
			pages={26--33},
		}
		
		\bib{CK3}{article}{
			author={Choi, Y.S.},
			author={Kim, S.G.},
			title={Exposed points of the unit balls of the spaces $\scr P({}^2l^2_p)\
				(p=1,2,\infty)$},
			journal={Indian J. Pure Appl. Math.},
			volume={35},
			date={2004},
			number={1},
			pages={37--41},
		}
		
		\bib{ChKiKi}{article}{
			author={Choi, Yun Sung},
			author={Kim, Sung Guen},
			author={Ki, Haseo},
			title={Extreme polynomials and multilinear forms on $l_1$},
			journal={J. Math. Anal. Appl.},
			volume={228},
			date={1998},
			number={2},
			pages={467--482},
		}
		
		\bib{DGMPbook}{book}{
			author={Defant, Andreas},
			author={Garc\'ia, Domingo},
			author={Maestre, Manuel},
			author={Sevilla-Peris, Pablo},
			title={Dirichlet series and holomorphic functions in high dimensions},
			series={New Mathematical Monographs},
			volume={37},
			publisher={Cambridge University Press, Cambridge},
			date={2019},
			pages={xxvii+680},
			doi={10.1017/9781108691611},
		}
		
		\bib{Di3}{article}{
			author={Dineen, Se\'{a}n},
			title={Extreme integral polynomials on a complex Banach space},
			journal={Math. Scand.},
			volume={92},
			date={2003},
			number={1},
			pages={129--140},
		}
		
		\bib{DMPS}{article}{
			author={Diniz, D.},
			author={Mu\~{n}oz-Fern\'{a}ndez, G. A.},
			author={Pellegrino, D.},
			author={Seoane-Sep\'{u}lveda, J. B.},
			title={The asymptotic growth of the constants in the Bohnenblust-Hille
				inequality is optimal},
			journal={J. Funct. Anal.},
			volume={263},
			date={2012},
			number={2},
			pages={415--428},
			issn={0022-1236},
			%review={\MR{2923418}},
			doi={10.1016/j.jfa.2012.04.014},
		}
		
		\bib{FGMMRS}{book}{
			author={Ferrer, J.},
			author={Garc\'ia, D.},
			author={Maestre, M.},
			author={Mu\~{n}oz-Fern\'{a}ndez, G.A.},
			author={Rodr\'iguez-Vidanes, D. L.},
			author={Seoane-Sep\'{u}lveda, J.B.},
			title={Geometry of the Unit Sphere in Polynomial Spaces},
			series={SpringerBriefs in Mathematics},
			publisher={Springer},
			date={2023},
			isbn={978-3-031-23675-4},
		}
		
		\bib{GMSS}{article}{
			author={G\'{a}mez-Merino, J.L.},
			author={Mu\~{n}oz-Fern\'{a}ndez, G.A.},
			author={S\'{a}nchez, V.M.},
			author={Seoane-Sep\'{u}lveda, J.B.},
			title={Inequalities for polynomials on the unit square via the
				Krein-Milman theorem},
			journal={J. Convex Anal.},
			volume={20},
			date={2013},
			number={1},
			pages={125--142},
		}
	
		\bib{GaGrMa}{article}{
		author={Garc\'{\i}a, Domingo},
		author={Grecu, Bogdan C.},
		author={Maestre, Manuel},
		title={Geometry in preduals of spaces of 2-homogeneous polynomials on Hilbert spaces},
		journal={Monatsh. Math.},
		volume={157},
		date={2009},
		number={1},
		pages={55--67},
		doi={10.1007/s00605-008-0017-7},
	}
	
		\bib{GMS2023}{article}{
		author={Garc\'ia, D.},
		author={Mu\~{n}oz-Fern\'{a}ndez, G. A.},
		author={Seoane-Sep\'ulveda, J.B.},
		title={Extreme homogeneous trinomials on the unit square},
		journal={J. Math. Anal. Appl.},
		%volume={15},
		date={2023},
		%number={4},
		pages={Paper No. 127134},
		%issn={2662-2033},
		%review={\MR{4292847}},
		%doi={10.1007/s43037-021-00144-8},
		}	
		
		\bib{G1}{article}{
			author={Grecu, B.C.},
			title={Geometry of homogeneous polynomials on two-dimensional real
				Hilbert spaces},
			journal={J. Math. Anal. Appl.},
			volume={293},
			date={2004},
			number={2},
			pages={578--588},
		}
		
		\bib{G2}{article}{
			author={Grecu, B.C.},
			title={Extreme 2-homogeneous polynomials on Hilbert spaces},
			journal={Quaest. Math.},
			volume={25},
			date={2002},
			number={4},
			pages={421--435},
		}
		
		\bib{G3}{article}{
			author={Grecu, B.C.},
			title={Geometry of 2-homogeneous polynomials on $l_p$ spaces,
				$1<p<\infty$},
			journal={J. Math. Anal. Appl.},
			volume={273},
			date={2002},
			number={2},
			pages={262--282},
		}
		
		\bib{G4}{article}{
			author={Grecu, B.C.},
			title={Smooth 2-homogeneous polynomials on Hilbert spaces},
			journal={Arch. Math. (Basel)},
			volume={76},
			date={2001},
			number={6},
			pages={445--454},
		}
		
		\bib{G5}{article}{
			author={Grecu, Bogdan C.},
			title={Geometry of three-homogeneous polynomials on real Hilbert spaces},
			journal={J. Math. Anal. Appl.},
			volume={246},
			date={2000},
			number={1},
			pages={217--229},
		}
		
		\bib{G6}{article}{
			author={Grecu, B.C.},
			author={Mu\~{n}oz-Fern\'{a}ndez, G.A.},
			author={Seoane-Sep\'{u}lveda, J.B.},
			title={The unit ball of the complex $\scr P(^3H)$},
			journal={Math. Z.},
			volume={263},
			date={2009},
			number={4},
			pages={775--785},
		}
		
		\bib{G7}{article}{
			author={Grecu, B. C.},
			author={Mu\~{n}oz-Fern\'{a}ndez, G. A.},
			author={Seoane-Sep\'{u}lveda, J. B.},
			title={Unconditional constants and polynomial inequalities},
			journal={J. Approx. Theory},
			volume={161},
			date={2009},
			number={2},
			pages={706--722},
		}

		\bib{JMMS}{article}{
			author={Jim\'{e}nez-Rodr\'{\i}guez, P.},
			author={Mu\~{n}oz-Fern\'{a}ndez, G. A.},
			author={Murillo-Arcila, M.},
			author={Seoane-Sep\'{u}lveda, J. B.},
			title={Sharp values for the constants in the polynomial Bohnenblust-Hille
				inequality},
			journal={Linear Multilinear Algebra},
			volume={64},
			date={2016},
			number={9},
			pages={1731--1749},
			issn={0308-1087},
			%review={\MR{3509496}},
			doi={10.1080/03081087.2015.1115810},
		}
		
		\bib{JMPS}{article}{
			author={Jim\'{e}nez-Rodr\'{\i}guez, P.},
			author={Mu\~{n}oz-Fern\'{a}ndez, G. A.},
			author={Pellegrino, D.},
			author={Seoane-Sep\'{u}lveda, J. B.},
			title={Bernstein-Markov type inequalities and other interesting estimates
				for polynomials on circle sectors},
			journal={Math. Inequal. Appl.},
			volume={20},
			date={2017},
			number={1},
			pages={285--300},
			issn={1331-4343},
			%review={\MR{3610901}},
			doi={10.7153/mia-20-21},
		}
		
		\bib{JMR2021}{article}{
			author={Jim\'{e}nez-Rodr\'{\i}guez, P.},
			author={Mu\~{n}oz-Fern\'{a}ndez, G. A.},
			author={Rodr\'{\i}guez-Vidanes, D. L.},
			title={Geometry of spaces of homogeneous trinomials on $\Bbb R^2$},
			journal={Banach J. Math. Anal.},
			volume={15},
			date={2021},
			number={4},
			pages={Paper No. 61, 22},
			issn={2662-2033},
			review={\MR{4292847}},
			doi={10.1007/s43037-021-00144-8},
		}

		\bib{KoRi}{article}{
			author={Konheim, A.G.},
			author={Rivlin, T.J.},
			title={Extreme points of the unit ball in a space of real polynomials},
			journal={Amer. Math. Monthly},
			volume={73},
			date={1966},
			pages={505--507},
		}
		
				\bib{AAMarkov}{article}{
						author={Markov, A.},
						title={On a problem of D. I. Mendeleev},
						language={Russian},
						journal={Zapiski Imp. Akad. Nauk},
						volume={62},
						date={1889},
						pages={1--24},
					}	
				
				\bib{ENG}{article}{
						author={Markov, A.},
						title={On a problem of D. I. Mendeleev},
						journal={Electronic article to be downloaded from
								http://www.math.technion.ac.il/hat/papers.html.},
					}	
		\bib{MN1}{article}{
			author={Milev, L.},
			author={Naidenov, N.},
			title={Indefinite extreme points of the unit ball in a polynomial space},
			journal={Acta Sci. Math. (Szeged)},
			volume={77},
			date={2011},
			number={3-4},
			pages={409--424},
		}
		
		\bib{MN2}{article}{
			author={Milev, L.},
			author={Naidenov, N.},
			title={Strictly definite extreme points of the unit ball in a polynomial
				space},
			journal={C. R. Acad. Bulgare Sci.},
			volume={61},
			date={2008},
			number={11},
			pages={1393--1400},
		}
		
		\bib{MN3}{article}{
			author={Milev, L.},
			author={Naidenov, N.},
			title={Semidefinite extreme points of the unit ball in a polynomial
				space},
			journal={J. Math. Anal. Appl.},
			volume={405},
			date={2013},
			number={2},
			pages={631--641},
		}
		
		\bib{MPS}{article}{
			author={Mu\~{n}oz-Fern\'{a}ndez, G. A.},
			author={Pellegrino, D.},
			author={Seoane-Sep\'{u}lveda, J. B.},
			title={Estimates for the asymptotic behaviour of the constants in the
				Bohnenblust-Hille inequality},
			journal={Linear Multilinear Algebra},
			volume={60},
			date={2012},
			number={5},
			pages={573--582},
			issn={0308-1087},
			%review={\MR{2916842}},
			doi={10.1080/03081087.2011.613833},
		}

		\bib{MPSW}{article}{
			author={Mu\~{n}oz-Fern\'{a}ndez, G. A.},
			author={Pellegrino, D.},
			author={Seoane-Sep\'{u}lveda, J. B.},
			author={Weber, A.},
			title={Supremum norms for 2-homogeneous polynomials on circle sectors},
			journal={J. Convex Anal.},
			volume={21},
			date={2014},
			number={3},
			pages={745--764},
		}

		\bib{MRS}{article}{
			author={Mu\~{n}oz-Fern\'{a}ndez, G.A.},
			author={R\'{e}v\'{e}sz, S. Gy.},
			author={Seoane-Sep\'{u}lveda, J.B.},
			title={Geometry of homogeneous polynomials on non symmetric convex
				bodies},
			journal={Math. Scand.},
			volume={105},
			date={2009},
			number={1},
			pages={147--160},
		}
	
	\bib{MR4740398}{article}{
		author={Mu\~noz-Fern\'andez, Gustavo A.},
		author={Ruiz, Sara},
		author={Seoane-Sep\'ulveda, Juan B.},
		title={Geometry of trinomials revisited},
		conference={
			title={Geometry of Banach spaces and related fields},
		},
		book={
			series={Proc. Sympos. Pure Math.},
			volume={106},
			publisher={Amer. Math. Soc., Providence, RI},
		},
		isbn={978-1-4704-7570-3},
		isbn={[9781470476748]},
		date={[2024] \copyright2024},
		pages={231--249},
	}
		
		\bib{MSanSeo1}{article}{
			author={Mu\~{n}oz-Fern\'{a}ndez, G. A.},
			author={S\'{a}nchez, V. M.},
			author={Seoane-Sep\'{u}lveda, J. B.},
			title={Estimates on the derivative of a polynomial with a curved majorant
				using convex techniques},
			journal={J. Convex Anal.},
			volume={17},
			date={2010},
			number={1},
			pages={241--252},
			issn={0944-6532},
			%review={\MR{2642728}},
		}

		\bib{MSanSeo2}{article}{
			author={Mu\~{n}oz-Fern\'{a}ndez, G. A.},
			author={S\'{a}nchez, V. M.},
			author={Seoane-Sep\'{u}lveda, J. B.},
			title={$L^p$-analogues of Bernstein and Markov inequalities},
			journal={Math. Inequal. Appl.},
			volume={14},
			date={2011},
			number={1},
			pages={135--145},
			issn={1331-4343},
			%review={\MR{2796983}},
			doi={10.7153/mia-14-11},
		}
		
		%		\bib{MSar}{article}{
			%			author={Mu\~{n}oz, Gustavo A.},
			%			author={Sarantopoulos, Yannis},
			%			title={Bernstein and Markov-type inequalities for polynomials on real
				%				Banach spaces},
			%			journal={Math. Proc. Cambridge Philos. Soc.},
			%			volume={133},
			%			date={2002},
			%			number={3},
			%			pages={515--530},
			%		}
		
		\bib{MSS}{article}{
			author={Mu\~{n}oz-Fern\'{a}ndez, G. A.},
			author={Sarantopoulos, Yannis},
			author={Seoane-Sep\'{u}lveda, Juan B.},
			title={An application of the Krein-Milman theorem to Bernstein and Markov
				inequalities},
			journal={J. Convex Anal.},
			volume={15},
			date={2008},
			number={2},
			pages={299--312},
		}
		
		\bib{MS}{article}{
			author={Mu\~{n}oz-Fern\'{a}ndez, G. A.},
			author={Seoane-Sep\'{u}lveda, Juan B.},
			title={Geometry of Banach spaces of trinomials},
			journal={J. Math. Anal. Appl.},
			volume={340},
			date={2008},
			number={2},
			pages={1069--1087},
		}
		
		\bib{N}{article}{
			author={Neuwirth, S.},
			title={The maximum modulus of a trigonometric trinomial},
			journal={J. Anal. Math.},
			volume={104},
			date={2008},
			pages={371--396},
		}
		
		\bib{RyTu}{article}{
			author={Ryan, Raymond A.},
			author={Turett, Barry},
			title={Geometry of spaces of polynomials},
			journal={J. Math. Anal. Appl.},
			volume={221},
			date={1998},
			number={2},
			pages={698--711},
		}
	
				\bib{V}{article}{
						author={Voronovskaja, E.V.},
						title={The method of functionals as applied to problems of
								Zolotarev-P\v{s}eborski\u{\i} type},
						language={Russian},
						journal={Dokl. Akad. Nauk SSSR},
						volume={194},
						date={1970},
						pages={20--23},
					}
		
	\end{biblist}
\end{bibdiv}

\end{document}